\newtheorem{theorem}{Theorem}[section]
\newtheorem{corollary}[theorem]{Corollary}
\newtheorem{definition}[theorem]{Definition}
\newtheorem{lemma}[theorem]{Lemma}
\newtheorem{proposition}[theorem]{Proposition}
\theoremstyle{remark}
\newtheorem{remark}[theorem]{Remark}
\numberwithin{equation}{section}
\newcommand{\ep}{\varepsilon}
\DeclareMathOperator*{\essinf}{ess\,inf}
\begin{document}

\title[Relaxation of Nonlocal Cahn-Hilliard equations]{Well-posedness and global attractors for a non-isothermal viscous relaxation of nonlocal Cahn-Hilliard equations}

\author[Joseph L. Shomberg]{Joseph L. Shomberg}

\subjclass[2010]{35B36, 37L30, 45K05, 74N20}

\keywords{Nonlocal Cahn-Hilliard equations, well-posedness, global attractors, regularity}

\address{Department of Mathematics and Computer Science, Providence
College, Providence, RI 02918, USA, \\
\tt{{jshomber@providence.edu} }}

\date{\today}

\begin{abstract}
We investigate a non-isothermal viscous relaxation of some nonlocal Cahn-Hilliard equations.
This perturbation problem generates a family of solution operators, exhibiting dissipation and conservation. 
The solution operators admit a family of compact global attractors that are bounded in a more regular phase-space.
\end{abstract}

\maketitle 

\tableofcontents

\section{Introduction}

Inside a bounded domain (container) $\Omega\subset\mathbb{R}^3,$ we consider a phase separation model for a binary solution (e.g. a cooling alloy),
\[
\phi_t = \nabla\cdot[\kappa(\phi)\nabla\mu],
\]
where $\phi$ is the {\em{order-parameter}} (the relative difference of the two phases), $\kappa$ is the {\em{mobility function}} (which we set $\kappa\equiv1$ throughout this article), and $\mu$ is the {\em{chemical potential}} (the first variation of the free-energy $E$ with respect to $\phi$).
In the classical model,
\[
\mu = -\Delta\phi + F'(\phi) \quad \text{and} \quad E(\phi) = \int_\Omega \left( \frac{1}{2}|\nabla\phi|^2 + F(\phi) \right) dx,
\]
where $F$ describes the density of potential energy in $\Omega$ (e.g. the double-well potential $F(s)=(1-s^2)^2$).

Recently the nonlocal free-energy functional appears in the literature \cite{Giacomin-Lebowitz-97},
\[
E(\phi) = \int_\Omega\int_\Omega \frac{1}{4}J(x-y)(\phi(x)-\phi(y))^2 dxdy + \int_\Omega F(\phi) dx,
\]
hence, the {\em{chemical potential}} is, $\mu = a\phi - J*\phi + F'(\phi),$ where
\begin{align}
a(x) = \int_\Omega J(x-y) dy \quad \text{and} \quad (J*\phi)(x) = \int_\Omega J(x-y)\phi(y) dy.  \notag
\end{align}

In this article we consider the following problems: for $\alpha>0$, $\delta>0$, and $\ep>0$ the {\em{relaxation}} Problem P$_{\alpha,\ep}$ is, given $T>0$ and $(\phi_0,\theta_0)^{tr},$ find $(\phi^+,\theta^+)^{tr}$ satisfying
\begin{eqnarray}
\phi^{+}_t = \Delta\mu^{+} &\text{in}& \Omega\times(0,T)  \label{rel-1} \\ 
\mu^{+} = a\phi^{+} - J*\phi^{+} + F'(\phi^{+}) + \alpha \phi^{+}_t - \delta\theta^{+} &\text{in}& \Omega\times(0,T)  \label{rel-2} \\ 
\ep\theta_t^{+} - \Delta\theta^{+} = -\delta \phi_t^{+} &\text{in}& \Omega\times(0,T)  \label{rel-3} \\ 
\partial_n\mu^{+} = 0 &\text{on}& \Gamma\times(0,T)  \label{rel-4} \\ 
\partial_n\theta^{+} = 0 &\text{on}& \Gamma\times(0,T)  \label{rel-5} \\ 
\phi^{+}(x,0) = \phi_0(x) &\text{at}& \Omega\times\{0\}  \label{rel-6} \\ 
\theta^{+}(x,0) = \theta_0(x) &\text{at}& \Omega\times\{0\}. \label{rel-7}  
\end{eqnarray}

The main focus of this article is to examine the the asymptotic behavior of solutions to Problem P$_{\alpha,\ep}$, via global attractors, and the regularity of these attractors.
For ease of presentation, throughout we assume there is $\delta_0>0$ so that $\delta\in(0,\delta_0]$, and also $(\alpha,\ep)\in(0,1]\times(0,1].$

Let us now give some preliminary words on the motivation for using nonlocal diffusion.
First, in \cite[Equation (0.2)]{AVMRTM10} the nonlocal diffusion terms $a\phi-J*\phi$ appear as,
\[
\int_\Omega J(x-y)\left( \phi(x,t)-\phi(y,t) \right)dy,
\]
i.e. $a(x)=J*1.$
Heuristically, this integral term ``takes into account the individuals arriving at or leaving position $x$ from other places.''
In this setting, the term $a(x)\ge0$ is a factor of how many individuals arrive at position $x$.
Since the integration only takes place over $\Omega,$ individuals are not entering nor exiting the domain.
Hence, this representation is faithful to the desired mass conservation law we typically associate with Neumann boundary conditions. 
Although Neumann boundary conditions for the chemical potential $\mu$ make sense from the physical point of view of mass conservation, it is not necessarily true that the interface between the two phases is always orthogonal to the boundary, which is implied by the boundary condition $\partial_{n}\phi=0$ which commonly appears in the literature. 
This is partially alleviated by using nonlocal diffusion on $\phi.$

We report an important observation (cf. \cite[Equations (2.2)-(2.3)]{Gal&Miranville09}).

\begin{remark}
Once we have determined the values of $\phi(t)$ and $\theta(t)$ for any value $t=t^*$, then the value of the chemical potential $\mu$ can be found by solving the boundary value problem (e.g. we illustrate with Problem P$_{\alpha,\ep}$), 
\begin{eqnarray*}
\mu(t^*)-\alpha\Delta\mu(t^*) = a\phi(t^*) - J*\phi(t^*) + F'(\phi(t^*)) - \delta\theta(t^*) &\text{in}& \Omega, \\
\partial_n\mu(t^*)=0 &\text{on}& \Gamma. 
\end{eqnarray*}
\end{remark}

There is obvious motivation already in the literature to investigate Problem P$_{\alpha,\ep}$ from the point of view of a singular limit of a Caginalp type phase-field system (cf. \cite[Equations (1.1)-(1.3)]{Gal&Grasselli08}, \cite[Equations (1.1)-(1.3)]{GGM08-2} and \cite{Miranville&Zelik02}).
Of the non-isothermal, nonlocal Allen-Cahn system, 
\begin{equation}  \label{mot-1}
\left\{ \begin{array}{l} \alpha \phi_t + a\phi-J*\phi + F'(\phi) = \delta\theta \\ 
\ep_1\theta_t - \Delta\theta = -\delta\phi_t, \end{array} \right. 
\end{equation}
with $\alpha>0$, $\delta>0$, and $\ep_1>0,$ the singular limit $\ep_1\rightarrow 0^+$ formally recovers the following isothermal, viscous, nonlocal Cahn-Hilliard equation,\begin{equation}  \label{mot-1.1}
\phi_t - \Delta(a\phi-J*\phi+F'(\phi)+\alpha\phi_t) = 0. 
\end{equation}
Equation \eqref{mot-1.1} in the case where $F$ is a singular (logarithmic) potential was studied in \cite{Gal&Grasselli14}. 
We should also notice that when we iterate this procedure to an appropriate non-isothermal version of \eqref{mot-1.1}, the resulting system is equivalent to \eqref{mot-1.1}. 
Indeed, when we consider the system,
\begin{equation*}
\left\{ \begin{array}{l} \phi_t = \Delta\mu \\ 
\mu = a\phi - J*\phi + F'(\phi) + \alpha\phi_t - \delta\theta \\ 
\ep_2\theta_t -\Delta\theta = -\delta\phi_t, \end{array} \right. 
\end{equation*}
the formal limit $\ep_2\rightarrow0^+$ yields the isothermal, viscous, nonlocal Cahn-Hilliard equation,
\[
\varphi_t=\Delta(a\phi-J*\phi+F'(\phi)+\beta\varphi_t),
\]
where 
\[
\beta=\frac{\alpha}{1+\delta^2} \quad \text{and} \quad \varphi(t)=\phi((1+\delta^2)t).
\]
Moreover, these relations effectively mean speeding up time by a factor of $1+\delta^2$ is equivalent to `loosening' the viscosity by the same factor (in the sense that the strong damping term has a weaker affect).

Finally, we now mention that (cf. \cite{Krejci_Sprekels04}) the term $-\delta\phi_t$ could be thought of as the linearization $\frac{d}{dt}G(\phi)$ for some appropriate function $G$. 
In this case the internal energy is nonlinear in the order parameter $\theta$; i.e., $e:=\theta+G(\phi).$

The first goal of this article concerns determining the global well-posedness of the model problem Problem P$_{\alpha,\ep}$. 
Second, we wish to determine the asymptotic behavior of the solutions to Problem P$_{\alpha,\ep}$ up to the existence of global attractors (or universal attractors) for appropriate $\alpha$ and $\ep$.

The main points of this article are as follows:

\begin{itemize}

\item For Problem P$_{\alpha,\ep}$ we establish (global) well-posedness of weak solutions using minimal assumptions on the nonlinear term $F$.

\item The weak solutions generate a strongly continuous one-parameter family of solution operators; i.e., a semigroup, which in turn admits a bounded absorbing set and certain compactness properties. 
Consequently the associated dynamical system is gradient.

\item The semigroup also admits a global attractor.
We show the global attractor is bounded in a more regular space with $\sqrt{\alpha}\mu\in L^\infty(0,\infty;H^2(\Omega))$.
Each of these properties hold for every $\alpha\in(0,1]$ and $\ep\in(0,1]$.

\end{itemize}

The next section provides the functional framework behind Problem P$_{\alpha,\ep}$. 

\section{Preliminaries}

Now we detail some preliminaries that will be applied to both problems.
To begin, define the spaces $H:=L^2(\Omega)$ and $V:=H^1(\Omega)$ with norms denoted by, $\|\cdot\|$ and $\|\cdot\|_V$, respectively. 
Otherwise, we write the norm of the Banach space $X$ with $\|\cdot\|_X$.
The inner-product in $H$ is denoted by $(\cdot,\cdot)$.
Denote the dual space of $V$ by $V'$, and the dual paring in $V'\times V$ is denoted by $\langle\cdot,\cdot\rangle.$
For every $\psi\in V'$, we denote by $\langle \psi \rangle$ the average of $\psi$ over $\Omega$, that is, 
\[
\langle \psi \rangle := \frac{1}{|\Omega|}\langle\psi,1\rangle,
\]
where $|\Omega|$ is the Lebesgue measure of $\Omega.$
Throughout, we denote by $\hat\psi:=\psi-\langle\psi\rangle$ and for future reference, observe $\langle\hat\psi\rangle=\langle \psi-\langle\psi\rangle \rangle=0.$
We will refer to the following norm in $V'$, which is equivalent to the usual one,
\begin{align}
\|\psi\|^2_{V'} = \left\| A^{-1/2}_N(\psi - \langle \psi \rangle) \right\|^2 + \langle \psi \rangle^2.  \notag
\end{align}

Define the space $L^2_0(\Omega):=\{\phi\in L^2(\Omega):\langle \phi \rangle=0\}.$
Let $A_N=-\Delta:L^2_0(\Omega)\rightarrow L^2_0(\Omega)$ with domain $D(A_N)=\{\psi\in H^2(\Omega):\partial_n\psi=0 \ \mathrm{on} \ \Gamma\}$ denote the ``Neumann-Laplace'' operator.
Of course the operator $A_N$ generates a bounded analytic semigroup, denoted $e^{-A_Nt}$, and the operator is nonnegative and self-adjoint on $L^2(\Omega).$ 
Recall, the domain $D(A_N)$ is dense in $H^2(\Omega).$
Further, define $V_0:=\{\psi\in V:\langle \psi \rangle=0\}$, and $V_0':=\{\psi\in V':\langle \psi \rangle=0\}$.
Then $A_N:V\rightarrow V'$, $A_N\in\mathcal{L}(V,V')$, is defined by, for all $u,v\in V$,
\[
\langle A_N u,v \rangle = \int_\Omega \nabla u\cdot \nabla v dx.
\]
It is well known that the restriction $A_{N\mid V_0}$ maps $V_0$ to $V_0'$ isomorphically, and the inverse map $\mathcal{N}=A_N^{-1}:V_0'\rightarrow V_0,$ is defined by, for all $\psi\in V_0'$ and $f\in V_0$
\[
A_N\mathcal{N}\psi=\psi, \quad \mathcal{N}A_Nf=f.
\]
Additionally, these maps satisfy the relations, for all $u\in V_0$ and $v,w\in V_0',$
\begin{align}
\langle A_N u,\mathcal{N}v\rangle = \langle u,v \rangle,  \label{NLr-1} \\ 
\langle v,\mathcal{N}w \rangle = \langle w,\mathcal{N}v \rangle.  \notag
\end{align}
The Sobolev space $V$ is endowed with the norm,
\begin{equation}  \label{H1-norm}
\|\psi\|^2_{V}:= \|\nabla\psi\|^2 + \langle \psi \rangle^2.
\end{equation}
Denote by $\lambda_\Omega>0$ the constant in the Poincar\'{e}-Wirtinger inequality,
\begin{equation}  \label{Poincare}
\|\psi-\langle\psi\rangle\| \le \sqrt{\lambda_\Omega}\|\nabla\psi\|.
\end{equation}
Whence, for $c_\Omega:=\max\{\lambda_\Omega,1\}$, there holds, for all $\psi\in V,$
\begin{align}
\|\psi\|^2 & \le \lambda_\Omega\|\nabla\psi\|^2 + \langle\psi\rangle^2  \label{Poincare2} \\ 
& \le c_\Omega\|\psi\|^2_{V}.  \notag
\end{align}

For each $m\ge0$, $\alpha>0$, and $\ep>0$ define the following energy phase-space for Problem P$_{\alpha,\ep}$,
\begin{align}
\mathbb{H}^{\alpha,\ep}_m:=\{ \zeta=(\phi,\theta)^{tr} \in H\times H : |\langle \phi \rangle|, |\langle \theta \rangle| \le m \},  \notag 
\end{align}
which is Hilbert when endowed with the $\alpha,\ep$-dependent norm whose square is given by,
\begin{align}
\|\zeta\|^2_{\mathbb{H}^{\alpha,\ep}_m} & := \|\phi\|^2_{V'} + \alpha\|\phi\|^2 + \ep\|\theta\|^2.  \notag 
\end{align}
When we are concerned with the dynamical system associated with Problem P$_{\alpha,\ep}$, we will utilize the following metric space
\begin{align}
\mathcal{X}^{\alpha,\ep}_m := \left\{ \zeta=(\phi,\theta)^{tr}\in\mathbb{H}^{\alpha,\ep}_m : F(\phi)\in L^1(\Omega) \right\},  \notag
\end{align}
endowed with the metric
\begin{align}
d_{\mathcal{X}^{\alpha,\ep}_m}(\zeta_1,\zeta_2) := \|\zeta_1-\zeta_2\|_{\mathbb{H}^{\alpha,\ep}_m} + \left| \int_\Omega F(\phi_1)dx - \int_\Omega F(\phi_2)dx \right|^{1/2}.  \notag
\end{align}
We also define the more regular phase-space for Problem P$_{\alpha,\ep}$,
\begin{align}
\mathbb{V}^{\alpha,\ep}_m:=\{ \zeta=(\phi,\theta)^{tr} \in V\times V : |\langle \phi \rangle|, |\langle \theta \rangle| \le m \},  \notag 
\end{align}
with the norm whose square is given by, $\|\zeta\|^2_{\mathbb{V}^{\alpha,\ep}_m} := \|\phi\|^2 + \alpha\|\phi\|^2_V + \ep\|\theta\|^2_V.$

The following assumptions on $J$ and $F$ are based on \cite{Frigeri&Grasselli12,Gal&Grasselli14}:

\begin{description}

\item[(H1)] $J\in W^{1,1}(\mathbb{R}^3)$, $J(-x)=J(x)$, and $a(x):=\int_\Omega J(x-y) dy > 0$ a.e. in $\Omega$.

\item[(H2)] $F\in C^{2,1}_{loc}(\mathbb{R})$ and there exists $c_0>0$ such that, for all $s\in\mathbb{R},$
\begin{align}
F''(s) + \inf_{x\in\Omega}a(x) \ge c_0.  \notag
\end{align}

\item[(H3)] There exists $c_1>\frac{1}{2}\|J\|_{L^1(\mathbb{R}^3)}$ and $c_2\in\mathbb{R}$ such that, for all $s\in\mathbb{R},$ 
\begin{align}
F(s)\ge c_1s^2 - c_2.  \notag 
\end{align}

\item[(H4)] There exists $c_3>0$, $c_4\ge0,$ and $p\in(1,2]$ such that, for all $s\in\mathbb{R},$
\begin{align}
|F'(s)|^p \le c_3|F(s)| + c_4.  \notag 
\end{align}

\item[(H5)] There exist $c_5,c_6>0,$ and $q>0$ such that, for all $s\in\mathbb{R},$
\begin{align}
F''(s) + \inf_{x\in\Omega}a(x) \ge c_5|s|^{2q} - c_6.  \notag
\end{align}

\end{description}

Let us make some remarks and report some important consequences of these assumptions. 
From \cite[Remark 2]{CFG12}: assumption (H2) implies that the potential $F$ is a quadratic perturbation of a (strictly) convex function. 
Indeed, if we set $a^*:=\|a\|_{L^\infty(\Omega)}$, then $F$ can be represented as 
\begin{equation}  \label{convex}
F(s)=G(s)-\frac{a^*}{2}s^2,
\end{equation}
with $G\in C^2(\mathbb{R})$ being strictly convex, since $G''\ge c_0$.
With (H3), for each $m\ge0$ there are constants $c_7,c_{8},c_{9},c_{10}>0$ (with $c_{8}$ and $c_{9}$ depending on $m$ and $F$) such that,
\begin{align}
F(s)-c_7\le c_{8}(s-m)^2 + F'(s)(s-m),  \label{Fcons-1}
\end{align}
\begin{align}
\frac{1}{2}|F'(s)|(1+|s|) \le F'(s)(s-m) + c_{9},  \label{Fcons-2}
\end{align}
and
\begin{align}
|F(s)| - c_{10} \le |F'(s)|(1+|s|).  \label{Fcons-3}
\end{align}
The last inequality appears in \cite[page 8]{Gal&Miranville09}.
With the positivity condition (H3), it follows that, for all $s\in\mathbb{R},$
\begin{equation}  \label{Fcons-3.1}
|F'(s)| \le c_3|F(s)|+c_4.
\end{equation}

{\em{A word of notation:}} In many calculations, functional notation indicating dependence on the variable $t$ is dropped; for example, we will write $\psi$ in place of $\psi(t)$. 
Throughout the article, $C>0$ will denote a \emph{generic} constant, while $Q:\mathbb{R}_{+}^d\rightarrow \mathbb{R}_{+}$ will denote a \emph{generic} increasing function in each of the $d$ components. 
Unless explicitly stated, all of these generic terms will be independent of the parameters $\alpha,$ $\delta,$ $\ep$, $T,$ and $m$.
Finally, throughout we will use the following abbreviations
\begin{align}
c_J:=\|J\|_{L^1(\Omega)} \quad \text{and} \quad d_J:=\|\nabla J\|_{L^1(\Omega)}.  \label{not-1}
\end{align}

\section{The relaxation Problem P$_{\alpha,\ep}$}

\subsection{Global well-posedness of Problem P$_{\alpha,\ep}$}

\begin{definition}  \label{d:ws}
For $T>0$, $\delta_0>0$, $\delta\in(0,\delta_0]$, $(\alpha,\ep)\in(0,1]\times(0,1]$, and $\zeta_0=(\phi_0,\theta_0)^{tr}\in H\times H$ with $F(\phi_0)\in L^1(\Omega)$, we say that $\zeta=(\phi,\theta)^{tr}$ is a {\sc{weak solution}} of Problem P$_{\alpha,\ep}$ on $[0,T]$ if $\zeta=(\phi,\theta)^{tr}$ satisfies 
\begin{align}
& \phi\in C([0,T];H) \cap L^2(0,T;V),  \label{ws-1} \\
& \phi_t\in L^2(0,T;V'),  \label{ws-2} \\ 
& \sqrt{\alpha}\phi_t\in L^2(0,T;V),  \label{ws-2.1} \\ 
& \mu=a(x)\phi - J*\phi + F'(\phi) + \alpha\phi_t - \delta\theta \in L^2(0,T;V),  \label{ws-3} \\
& \theta\in C([0,T];H) \cap L^2(0,T;V),  \label{ws-4} \\
& \theta_t\in L^2(0,T;V').  \label{ws-5} 
\end{align}
In addition, upon setting, 
\begin{align}
\rho = \rho(x,\phi) := a(x)\phi + F'(\phi),  \label{ws-6}
\end{align}
for every $\varphi,\vartheta\in V,$ there holds, for almost all $t\in(0,T),$
\begin{align}
\langle\phi_t,\varphi\rangle + (\nabla\rho,\nabla\varphi) - (\nabla (J*\phi),\nabla\varphi) + \alpha(\nabla\phi_t,\nabla\varphi) & = \delta(\nabla\theta,\nabla\varphi)  \label{ws-7} \\ 
\ep\langle\theta_t,\vartheta\rangle + (\nabla \theta,\nabla\vartheta) & = -\delta\langle\phi_t,\vartheta\rangle.  \label{ws-8}
\end{align}
Also, there holds,
\begin{align}
\phi(0) = \phi_0 \quad \text{and} \quad \theta(0) & = \theta_0.  \label{ws-9}
\end{align}
We say that $\zeta=(\phi,\theta)^{tr}$ is a {\sc{global weak solution}} of Problem P$_{\alpha,\ep}$ if it is a weak solution on $[0,T]$, for any $T>0.$
The initial conditions \eqref{ws-9} hold in the $L^2$-sense; i.e., for every $\varphi,\vartheta\in V,$
\begin{align}
(\phi(0),\varphi) = (\phi_0,\varphi) \quad \text{and} \quad (\theta(0),\vartheta)=(\theta_0,\vartheta)  \label{L2-initial-theta}
\end{align}
hold.
\end{definition}

It is well-known that the average value of $\phi$ is conserved (cf. e.g. \cite[Section III.4.2]{Temam01}). 
Indeed, taking $\varphi=1$ in \eqref{ws-7} yields, $\frac{\partial}{\partial t}\int_\Omega \phi(x,t) dx = 0$ and we naturally recover the {\em{conservation of mass}}
\begin{align}
\langle \phi(t) \rangle & = \langle \phi_0 \rangle.  \label{con-mass}
\end{align}
In addition to \eqref{con-mass}, taking $\vartheta=1$ in \eqref{ws-8} yields $\frac{\partial}{\partial t}\int_\Omega \theta(x,t) dx = 0$ and we also establish 
\begin{align}
\langle \theta(t) \rangle & = \langle \theta_0 \rangle \quad \text{as well as} \quad \partial_t\langle \phi(t) \rangle = \partial_t\langle \theta(t) \rangle = 0.  \label{con-heat}
\end{align}
Together, \eqref{con-mass} and \eqref{con-heat} constitute {\em{conservation of enthalpy}}.

\begin{theorem}  \label{t:existence}
Assume (H1)-(H5) hold with $p\in(\frac{6}{5},2]$ and $q\ge\frac{1}{2}$. 
For any $\zeta_0=(\phi_0,\theta_0)^{tr}\in H\times H$ with $F(\phi_0)\in L^1(\Omega)$, there exists a global weak solution $\zeta=(\phi,\theta)^{tr}$ to Problem P$_{\alpha,\ep}$ in the sense of Definition \ref{d:ws} satisfying the additional regularity, for any $T>0$, 
\begin{eqnarray}
\phi & \in & L^\infty(0,T;L^{2+2q}(\Omega)),  \label{wk-0.001} \\
\sqrt{\alpha}\phi & \in & L^\infty(0,T;V),  \label{wk-0.01} \\
F(\phi) & \in & L^\infty(0,T;L^1(\Omega)),  \label{wk-0.2} \\ 
\theta_t & \in & L^2(0,T;H).  \label{wk-0.3}
\end{eqnarray}
Furthermore, setting 
\begin{align}
\mathcal{E}_\ep(t):=\frac{1}{4} \int_\Omega \int_\Omega J(x-y)\left( \phi(x,t)-\phi(y,t) \right)^2 dxdy + \int_\Omega F(\phi(x,t)) dx + \frac{\ep}{2}\int_\Omega \theta(t)^2 dx,  \label{wk-1}
\end{align} 
the following energy equality holds, for all $\zeta_0=(\phi_0,\theta_0)^{tr}\in \mathbb{H}^{\alpha,\ep}_m$ with $F(\phi_0)\in L^1(\Omega)$, and $t\in[0,T],$ 
\begin{align}
& \mathcal{E}_\ep(t) + \int_0^t \left( \|\nabla\mu(s)\|^2 + \alpha\|\phi_t(s)\|^2 + \|\nabla\theta(s)\|^2 \right) ds = \mathcal{E}_\ep(0).  \label{wk-2}
\end{align}
\end{theorem}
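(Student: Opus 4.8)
The plan is to construct the solution by a Faedo--Galerkin scheme using the eigenfunctions of the Neumann--Laplace operator $A_N$ (which form an orthonormal basis of $H$ and are orthogonal in $V$), derive $\alpha,\ep$-uniform (but crucially \emph{not necessarily} $\alpha,\ep$-uniform in all the norms — here we only need $T$-dependent bounds for fixed $\alpha,\ep$) a priori estimates, and pass to the limit. For the Galerkin ansatz $\phi_n = \sum_{k=1}^n c_k(t)w_k$, $\theta_n = \sum_{k=1}^n d_k(t)w_k$ one projects \eqref{rel-1}--\eqref{rel-3} onto the span of $w_1,\dots,w_n$; note that because of the $\alpha\phi_t$ term the first equation at the discrete level reads $\langle\phi_{n,t},w_j\rangle + \alpha(\nabla\phi_{n,t},\nabla w_j) + (\nabla\rho_n,\nabla w_j) - (\nabla(J*\phi_n),\nabla w_j) = \delta(\nabla\theta_n,\nabla w_j)$, so the mass matrix $\mathrm{Id}+\alpha A_N$ (restricted to the span) is invertible and the resulting system of ODEs is solvable locally by Cauchy--Lipschitz, with global existence following from the energy estimate below.

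The core is the a priori estimate obtained by testing the $\phi$-equation with $\mu_n$ (the discrete chemical potential, which lies in the Galerkin space by construction) and the $\theta$-equation with $\theta_n$, then adding. The key algebraic identity is that $\frac{d}{dt}$ of the nonlocal interaction energy $\tfrac14\int_\Omega\int_\Omega J(x-y)(\phi(x)-\phi(y))^2\,dx\,dy$ plus $\frac{d}{dt}\int_\Omega F(\phi)\,dx$ equals $((a\phi - J*\phi + F'(\phi)),\phi_t)$, which is precisely $(\mu_n + \delta\theta_n - \alpha\phi_{n,t},\phi_{n,t})$; rearranging produces exactly the energy equality \eqref{wk-2} at the Galerkin level (the $\delta\theta_n\phi_{n,t}$ cross-term cancels against the corresponding term from testing \eqref{rel-3} with $\theta_n$, which is why the enthalpy-type coupling is energetically neutral). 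Coercivity (H3), together with (H1) giving $a\in L^\infty$ and $\|J*\phi\|\le c_J\|\phi\|$, makes $\mathcal{E}_\ep(t)$ bounded below by $c\|\phi\|^2 + \tfrac{\ep}{2}\|\theta\|^2 - C$, so from \eqref{wk-2} one reads off $\phi\in L^\infty(0,T;H)$, $\sqrt\ep\,\theta\in L^\infty(0,T;H)$, $F(\phi)\in L^\infty(0,T;L^1)$ (hence \eqref{wk-0.2}), $\nabla\mu\in L^2(0,T;H)$, $\sqrt\alpha\,\phi_t\in L^2(0,T;H)$, and $\nabla\theta\in L^2(0,T;H)$. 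To get $\phi\in L^2(0,T;V)$ and $\mu\in L^2(0,T;V)$ one notes $\mu$ is controlled in $L^2(0,T;V)$ once its average is: $\langle\mu\rangle = \langle\rho\rangle - \langle J*\phi\rangle - \delta\langle\theta\rangle$ with $\langle\rho\rangle = \langle a\phi\rangle + \langle F'(\phi)\rangle$, and $\langle F'(\phi)\rangle$ is controlled via (H4)/\eqref{Fcons-2}--\eqref{Fcons-3} and the $L^1$ bound on $F(\phi)$. Then $\rho = \mu + J*\phi + \delta\theta - \alpha\phi_t$ is bounded in $L^2(0,T;V)$ (using $\sqrt\alpha\phi_t\in L^2(0,T;V)$, noting $\alpha\le 1$), and the strict monotonicity $F''+a\ge c_0$ in (H2) — equivalently $\rho(x,\cdot)$ strictly increasing — gives an $L^2(0,T;V)$ bound on $\phi$ of the standard Cahn--Hilliard type (testing $\nabla\rho$ against $\nabla\phi$ and using $(F''(\phi)+a)|\nabla\phi|^2 \ge c_0|\nabla\phi|^2 - (\nabla a\cdot\nabla\phi)\phi$, with $\nabla a\in L^\infty$ by (H1)). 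The higher-integrability \eqref{wk-0.001} and the companion $\sqrt\alpha\phi\in L^\infty(0,T;V)$ (i.e. \eqref{wk-0.01}) come from a second energy-type estimate: test the $\phi$-equation with $A_N\phi_n$ (or with $\phi_{n,t}$ together with $\mu_n$), exploiting (H5) which upgrades coercivity to $F''+a\ge c_5|s|^{2q}-c_6$ and thus yields $\int_\Omega|\phi|^{2q}|\nabla\phi|^2$, hence via a chain-rule identity an $L^\infty$-in-time bound on $\int_\Omega|\phi|^{2+2q}$, using $q\ge\tfrac12$ so that $2+2q\ge 3$ matches the dimension-$3$ embedding exponents. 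Finally \eqref{wk-0.3}, $\theta_t\in L^2(0,T;H)$, follows by testing \eqref{rel-3} with $\theta_{n,t}$: $\ep\|\theta_{n,t}\|^2 + \tfrac12\frac{d}{dt}\|\nabla\theta_n\|^2 = -\delta(\phi_{n,t},\theta_{n,t})\le \tfrac\ep2\|\theta_{n,t}\|^2 + \tfrac{\delta^2}{2\ep}\|\phi_{n,t}\|^2$, and $\phi_t\in L^2(0,T;H)$ is available from $\sqrt\alpha\phi_t\in L^2(0,T;V)\hookrightarrow L^2(0,T;H)$ (for fixed $\alpha>0$); this simultaneously upgrades $\nabla\theta$ to $L^\infty(0,T;H)$.

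With these bounds in hand, passing to the limit is routine: weak/weak-$*$ compactness extracts limits $\phi,\theta,\mu$; the Aubin--Lions--Simon lemma applied with $\phi_n$ bounded in $L^2(0,T;V)$ and $\phi_{n,t}$ in $L^2(0,T;V')$ (and $\theta_n$ likewise) gives strong convergence in $L^2(0,T;H)$ and a.e., so that $a\phi_n\to a\phi$, $J*\phi_n\to J*\phi$, and — using (H4)/\eqref{Fcons-3.1} to get equi-integrability of $F'(\phi_n)$, or a Lions-type argument — $F'(\phi_n)\to F'(\phi)$ in an appropriate sense; continuity in time $\phi,\theta\in C([0,T];H)$ and the attainment of initial data follow by the usual interpolation/Lions--Magenes argument. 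For the energy \emph{equality} (not merely inequality) one passes to the limit in the Galerkin energy identity, using the strong convergences to handle the nonlocal and $F$ terms, and lower semicontinuity combined with the reverse inequality obtained by testing the limit equation with $\mu$ (legitimate since the regularity \eqref{ws-3}, \eqref{ws-2.1} makes $\mu$ an admissible test function, and $t\mapsto\mathcal{E}_\ep(t)$ is absolutely continuous by the established regularity); alternatively one directly justifies the chain rule $\frac{d}{dt}\int_\Omega F(\phi) = \langle F'(\phi),\phi_t\rangle$ for the limit solution using $F'(\phi)\in L^2(0,T;V)$ (implied by $\rho, a\phi \in L^2(0,T;V)$ together with $\phi\in L^2(0,T;V)$) paired with $\phi_t\in L^2(0,T;V')$. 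I expect the main obstacle to be the passage to the limit in the nonlinear term $F'(\phi_n)$ and, relatedly, the proof that the energy equality (rather than only an inequality) holds: one must secure enough integrability of $F'(\phi_n)$ — this is exactly where the hypothesis $p>\tfrac65$ in (H4) enters, since it guarantees, via \eqref{Fcons-3.1} and the $L^\infty(0,T;L^{2+2q})$ bound with $q\ge\tfrac12$, that $F'(\phi_n)$ is bounded in $L^2(0,T;H)$ (or at least weakly sequentially compact there), which is what is needed both to identify the limit and to pass to the limit in $\int_0^t(F'(\phi_n),\phi_{n,t})$. Controlling $\langle F'(\phi)\rangle$ uniformly — needed to close the $L^2(0,T;V)$ bound on $\mu$ — is the other delicate point, handled by \eqref{Fcons-2} combined with conservation of mass \eqref{con-mass} so that the average $\langle\phi\rangle = \langle\phi_0\rangle$ stays fixed.
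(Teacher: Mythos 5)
Your overall strategy --- Galerkin approximation on the eigenfunctions of the Neumann Laplacian, the energy estimate from testing with $\mu_n$ and $\theta_n$, coercivity via (H3), control of $\langle\mu_n\rangle$ through (H4) and the $L^1$ bound on $F(\phi_n)$, the extra tests with $\phi_n$ and $\theta_n'$, Aubin--Lions compactness, and the convexity/chain-rule argument to upgrade the energy inequality to the equality \eqref{wk-2} --- is exactly the paper's proof. But one step is genuinely missing. Your scheme starts from $\phi_{0n}=P_n\phi_0$ and needs the Galerkin initial energy, in particular $\int_\Omega F(P_n\phi_0)\,dx$, to be bounded in terms of $\|\phi_0\|$ and $\int_\Omega F(\phi_0)\,dx$. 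Since (H1)--(H5) allow $F$ to grow like an arbitrary polynomial, convergence $P_n\phi_0\to\phi_0$ in $H$ gives no control whatsoever on $\int_\Omega F(P_n\phi_0)\,dx$, and the hypothesis is only $\phi_0\in H$ with $F(\phi_0)\in L^1(\Omega)$. The paper deals with this by first taking $\phi_0\in D(A_N)$ (so that $P_n\phi_0\to\phi_0$ in $H^2(\Omega)\hookrightarrow L^\infty(\Omega)$ and the potential term converges) and then running a separate density argument for general data, exploiting the quadratic-perturbation-of-a-convex-function structure \eqref{convex}. Without this reduction the right-hand side of your Galerkin energy estimate is simply not under control, so the a priori bounds do not close.

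Two further points are imprecise though repairable. First, testing with $\mu_n$ does \emph{not} produce the energy equality exactly at the Galerkin level: since $a\phi_n+F'(\phi_n)$ and $J*\phi_n$ do not lie in the Galerkin subspace, the identity carries the projection error $(\nabla(P_n(J*\phi_n)),\nabla\mu_n)-(\nabla(J*\phi_n),\nabla\mu_n)$, which must be estimated (as in \eqref{wk-12.1}--\eqref{wk-12.2}) and only vanishes in the limit; this is precisely why the limit passage first yields an inequality and the equality requires the separate chain-rule argument you sketch at the end (the paper justifies that chain rule via the convex splitting and a subdifferential lemma, which is more than the formal pairing $\langle F'(\phi),\phi_t\rangle$ you invoke). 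Second, the bound \eqref{wk-0.001} follows most directly by integrating (H5) twice to get $F(s)\ge c|s|^{2q+2}-C$ and invoking \eqref{wk-0.2}; your route via $\int_\Omega|\phi|^{2q}|\nabla\phi|^2$ would only give an $L^2$-in-time bound, not the stated $L^\infty$-in-time bound. Relatedly, your use of $\sqrt{\alpha}\phi_t\in L^2(0,T;V)$ to place $\rho$ in $L^2(0,T;V)$ is not supported by the estimates you actually derive (which give $\sqrt{\alpha}\phi_t\in L^2(0,T;H)$ only); fortunately the $L^2(0,T;V)$ bound on $\phi$ already follows from testing with $\phi_n$ and the (H2) coercivity, as you also indicate.
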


\begin{proof}
We follow the proofs of \cite[Theorem 1]{CFG12} and \cite[Theorem 2.1]{Porta-Grasselli-2014}.
The proof proceeds in several steps.
The existence proof begins with a Faedo-Galerkin approximation procedure to which we later pass to the limit. 
We first assume that $\phi_0\in D(A_N)$ and $\theta_0\in H.$ (The first assumption will be used to show that there is a sequence $\{\phi_{0n}\}_{n=1}^\infty$ such that $\phi_{0n}\rightarrow\phi_{0}$ in $H^2(\Omega)$ as well as $L^\infty(\Omega)$, which will be important in light of the fact that $F(\phi_{0n})$ is of arbitrary polynomial growth per assumptions (H1)-(H5).)
The existence of a weak solution for $\phi_0\in H$ with $F(\phi_0)\in L^1(\Omega)$ will follow from a density argument and by exploiting the fact that the potential $F$ is a quadratic perturbation of a convex function (cf. equation \eqref{convex}).

{\em{Step 1. (Construction and boundedness of approximate maximal solutions)}} Recall that the linear operator $A_N+I$ is positive and self-adjont on $H$. 
Then we have a complete system of eigenfunctions $\{\psi_i\}_{i=1}^\infty$ of the eigenvalue problem $(A_N+I)\psi_i=\lambda_i\psi_i$ in $H$ with $\psi_i\in D(A_N)=\{\chi\in H^2(\Omega):\partial_n\chi=0\ \text{on}\ \Gamma\}$.
We know by spectral theory that the eigenvalues may be ordered and counted according to their multiplicities in order to form a (real) diverging sequence. 
The set of respective eigenvectors, $\{\psi_i\}_{i=1}^\infty$, forms an orthogonal basis in $V$, which we may assume is orthonormal in $H$.

Define the subspaces 
\[
\Psi_n:={\textrm{span}}\{\psi_1,\psi_2,\dots,\psi_n\} \quad \text{and} \quad \Psi_\infty:=\bigcup_{n=1}^\infty \Psi_n.
\]
By construction, clearly $\Psi_\infty$ is dense in $D(A_N).$
Then, for any fixed $T>0$ and $n\in\mathbb{N},$ we will seek functions of the form 
\begin{equation}
\phi_n(t)=\sum_{k=1}^{n}a_k(t)\psi_k \quad \text{and} \quad \theta_n(t)=\sum_{k=1}^{n}b_k(t)\psi_k,  \label{wk-2.5}
\end{equation}
that solve the following approximating problems for any $\delta_0>0,$ $\delta\in(0,\delta_0],$ $(\alpha,\ep)\in(0,1]\times(0,1]$, and for all $t\in[0,T],$
\begin{align}
(\phi_n',\varphi) + (\nabla\rho_n(\cdot,\phi_n),\nabla\varphi) - (\nabla J*\phi_n,\nabla\varphi) + \alpha(\nabla\phi'_n,\nabla\varphi) & = \delta(\nabla\theta_n,\nabla\varphi),  \label{wk-3} \\ 
\ep(\theta_n',\vartheta) + (\nabla\theta_n,\nabla\vartheta) & = -\delta(\phi_n',\vartheta),  \label{wk-4} \\
\rho_n=\rho(\cdot,\phi_n) & := a(\cdot)\phi_n + F'(\phi_n),  \label{wk-5} \\ 
\mu_n = P_n(\rho_n - J*\phi_n & + \alpha\phi_n' - \delta\theta_n),  \label{wk-6} \\ 
(\phi_n(0),\varphi) & = (\phi_{0n},\varphi),  \label{wk-7} \\ 
(\theta_n(0),\vartheta) & = (\theta_{0n},\vartheta),  \label{wk-8}
\end{align}
for every $\varphi,\vartheta\in\Psi_n$, and where $\phi_{0n}=P_n\phi_0$ and $\theta_{0n}=P_n\theta_0$; $P_n$ being the $n$-dimensional projection of $H$ onto $\Psi_n$. 
Throughout the remainder of the proof we set $M_0:=\langle \phi_0 \rangle$ and $N_0:=\langle \theta_0 \rangle.$
The functions $a_i$ and $b_i$ are assumed to be (at least) $C^2((0,T))$.
It is also worth noting that \eqref{con-mass} and \eqref{con-heat}, also hold for the discretized functions $\phi_n$ and $\theta_n$.

To show the existence of at least one solution to (\ref{wk-3})-(\ref{wk-8}), we now suppose that $n$ is fixed and we take $\varphi=\phi_k$ and $\vartheta=\theta_k$ for some $1\le k\le n$. 
Then substituting the discretized functions (\ref{wk-2.5}) into (\ref{wk-3})-(\ref{wk-8}), we arrive at a system of $n$ ODEs in the unknowns $a_k=a_k(t)$ and $b_k=b_k(t)$ on $\Psi_n$. 
Since $J\in W^{1,1}(\mathbb{R}^3)$ and $F\in C^{2,1}_{loc}(\mathbb{R})$, we may apply Cauchy's/Carath\'{e}odory's theorem for ODEs to find that there is $T_n\in(0,T)$ such that $a_k,b_k\in C^2((0,T_n))$, for $1\le k\leq n$, and (\ref{wk-3})-(\ref{wk-4}) hold in the classical sense for all $t\in[0,T_n]$. 
Since $F'\in C^1(\mathbb{R})$, this argument shows the existence of a unique maximal solution to the projected problem \eqref{wk-3}-\eqref{wk-8}.

Now we need to derive some {\em{a priori}} estimates to apply to the approximate maximal solutions to show that $T_n=+\infty$, for every $n\ge1$, and that the corresponding sequences $\phi_n$, $\theta_n$ and $\mu_n$ are bounded in some appropriate function spaces. 
To begin, we take $\varphi=\mu_n$ as a test function in \eqref{wk-3} and $\vartheta=\theta_n$ as a test function in \eqref{wk-4}, to obtain
\begin{align}
(\phi_n',\mu_n) + (\nabla\rho(\cdot,\phi_n),\nabla\mu_n) - (\nabla J*\phi_n,\nabla\mu_n) + \alpha(\nabla\phi'_n,\nabla\mu_n) & = \delta(\nabla\theta_n,\nabla\mu_n),  \label{wk-9} 
\end{align}
and
\begin{align}
\frac{\ep}{2}\frac{d}{dt}\|\theta_n\|^2 + \|\nabla\theta_n\|^2 = -\delta(\phi'_n,\theta_n).  \label{wk-9.1} 
\end{align}
Now we write (recall $J$ is even by (H1), so, in $H$, $(J*\phi_n)\phi'_n=(J*\phi'_n)\phi_n$),
\begin{align}
(\phi'_n,\mu_n) & = (\phi'_n,a\phi_n - J*\phi_n + F'(\phi_n) + \alpha\phi_n' - \delta\theta_n)  \notag \\
& = \frac{d}{dt}\left\{ \frac{1}{2}\|\sqrt{a}\phi_n\|^2 - \frac{1}{2}(J*\phi_n,\phi_n) + \int_\Omega F(\phi_n)dx \right\} + \alpha\|\phi'_n\|^2 - \delta(\phi'_n,\theta_n)  \notag \\
& = \frac{d}{dt}\left\{ \frac{1}{4} \int_\Omega\int_\Omega J(x-y)\left(\phi_n(x)-\phi_n(y)\right)^2 dxdy + \int_\Omega F(\phi_n)dx \right\} + \alpha\|\phi'_n\|^2 - \delta(\phi'_n,\theta_n).  \label{wk-10}
\end{align}
Also, 
\begin{align}
(\nabla\rho(\cdot,\phi_n),\nabla\mu_n) & = -(\rho(\cdot,\phi_n),\Delta\mu_n)  \notag \\ 
& = (-\rho_n,\Delta\mu_n)  \notag \\ 
& = (\nabla\rho_n,\nabla\mu_n),  \notag
\end{align}
where $\rho_n:=P_n\rho(\cdot,\phi_n)=\mu_n+P_n(J*\phi_n)-\alpha\phi'_n+\delta\theta_n.$
Hence, 
\begin{align}
(\nabla\rho(\cdot,\phi_n),\nabla\mu_n) = \|\nabla\mu_n\|^2 + (\nabla(P_n(J*\phi_n)),\nabla\mu_n) - \alpha(\nabla\phi'_n,\nabla\mu_n) + \delta(\nabla\theta_n,\nabla\mu_n).  \label{wk-11}
\end{align}
Combining \eqref{wk-1} (with the discritized functions), \eqref{wk-9}-\eqref{wk-11} yields the differential identity, 
\begin{align}
& \frac{d}{dt}\mathcal{E}_\ep + \|\nabla\mu_n\|^2 + \|\nabla\theta_n\|^2 + \alpha\|\phi'_n\|^2 + (\nabla(P_n(J*\phi_n)),\nabla\mu_n) - (\nabla J*\phi_n,\nabla\mu_n) = 0.  \label{wk-12} 
\end{align}
Estimating the two products in \eqref{wk-12}, we find
\begin{align}
(\nabla(P_n(J*\phi_n)),\nabla\mu_n) & \le \|\nabla(P_n(J*\phi_n))\|^2 + \frac{1}{4}\|\nabla\mu_n\|^2  \notag \\ 
& \le \|(A_N+I)^{1/2}P_n(J*\phi_n)\|^2 + \frac{1}{4}\|\nabla\mu_n\|^2  \notag \\ 
& \le \|\nabla J*\phi_n\|^2 + \|J*\phi_n\|^2 + \frac{1}{4}\|\nabla\mu_n\|^2  \notag \\ 
& \le (d_J^2+c_J^2)\|\phi_n\|^2 + \frac{1}{4}\|\nabla\mu_n\|^2,  \label{wk-12.1} 
\end{align}
and
\begin{align}
(\nabla J*\phi_n,\nabla\mu_n) & \le d_J^2\|\phi_n\|^2 + \frac{1}{4}\|\nabla\mu_n\|^2.  \label{wk-12.2}
\end{align}
Observe that with the aid of hypothesis (H3), there holds 
\begin{align}
\mathcal{E}_\ep & = \frac{1}{4} \int_\Omega\int_\Omega J(x-y)\left(\phi_n(x)-\phi_n(y)\right)^2 dxdy + \int_\Omega F(\phi_n)dx + \frac{\ep}{2}\int_\Omega\theta^2_ndx  \notag \\ 
& = \frac{1}{2}\|\sqrt{a}\phi_n\|^2 - \frac{1}{2}(J*\phi_n,\phi_n) + \int_\Omega F(\phi_n)dx + \frac{\ep}{2}\|\theta_n\|^2  \notag \\ 
& \ge \frac{1}{2}\int_\Omega\left( a+2c_1-\|J\|_{L^1(\Omega)} \right)\phi_n^2 dx - c_2|\Omega| + \frac{\ep}{2}\|\theta_n\|^2  \notag \\ 
& \ge \left( c_1 - \frac{c_J}{2} \right)\|\phi_n\|^2 - c_2|\Omega| + \frac{\ep}{2}\|\theta_n\|^2.  \label{wk-13}
\end{align}
Now, combining \eqref{wk-12}-\eqref{wk-12.2} and integrating the resulting inequality with respect to $t$ over $(0,T_n)$ and applying \eqref{wk-13} to the result produces,
\begin{align}
\left( c_1 - \frac{c_J}{2} \right) & \|\phi_n(t)\|^2 + \frac{\ep}{2}\|\theta_n(t)\|^2 + \frac{1}{2}\int_0^t \|\nabla\mu_n(s)\|^2 ds + \int_0^t \|\nabla\theta_n(s)\|^2 ds + \alpha \int_0^t \|\phi'_n(s)\|^2 ds  \notag \\
& \le (c_J^2+2d_J^2)\int_0^t \|\phi_n(s)\|^2 ds + \mathcal{E}_\ep(0) + c_2|\Omega|.  \notag
\end{align}
Using the basic estimate $\|P_n\psi\| \le \|\psi\|$ we find,
\begin{align}
\mathcal{E}_\ep(0) & = \frac{1}{4} \int_\Omega\int_\Omega J(x-y)\left(\phi_{0n}(x)-\phi_{0n}(y)\right)^2 dxdy + \int_\Omega F(\phi_{0n})dx + \frac{\ep}{2}\int_\Omega\theta^2_{0n}dx  \notag \\ 
& \le C(c_J,|\Omega|)\|\phi_{0}\|^2 + \int_\Omega F(\phi_{0}) dx + \frac{1}{2}\|\theta_{0}\|^2.  \label{wk-13.9}
\end{align}
The hypothesis that $F(\phi_0)\in L^1(\Omega)$ where $\phi_0\in D(A_N)$ implies that $\phi_{0n}\rightarrow\phi_0$ in $H^2(\Omega)$, and hence $L^\infty(\Omega)$.
Moreover,
\begin{align}
\|\phi_n(t)\|^2 + \ep\|\theta_n(t)\|^2 & + \int_0^t \|\nabla\mu_n(s)\|^2 ds + \int_0^t \|\theta_n(s)\|^2_{V} ds + \alpha \int_0^t \|\phi'_n(s)\|^2 ds  \notag \\
& \le \frac{1}{\nu_0}(c_J^2+2d_J^2)\int_0^t \|\phi_n(s)\|^2 ds + Q(\|\zeta_0\|_{\mathbb{H}^{\alpha,\ep}_m}) + N_0^2\cdot T,  \label{wk-14}
\end{align}
where (and with (H3)),
\begin{equation}  \label{wk-14.1}
\nu_0=\nu_0(J):=\min\left\{c_1 - \frac{c_J}{2}, \frac{1}{2} \right\}>0,
\end{equation}
and where the extra term appearing on the right-hand side of \eqref{wk-14} is to make the $V$ norm for $\theta_n$ on the left-hand side.
Since the right-hand side of \eqref{wk-14} is independent of $n$ and $t$, we deduce, by means of a Gr\"onwall inequality, that $T_n=+\infty,$ for every $n\ge1,$ i.e., the projected problem \eqref{wk-3}-\eqref{wk-8} has a unique global in time solution as $T>0$ is arbitrary, and \eqref{wk-14} is satisfied for every $t\ge0.$
Furthermore, from \eqref{wk-14}, we obtain the following estimates for any given $0<T<+\infty,$
\begin{eqnarray}
\phi_n & ~\text{is uniformly bounded in}~ & L^\infty(0,T;H),  \label{wk-15} \\
\theta_n & ~\text{is uniformly bounded in}~ & L^\infty(0,T;H),  \label{wk-16} \\
\nabla\mu_n & ~\text{is uniformly bounded in}~ & L^2(0,T;H),  \label{wk-17} \\
\theta_n & ~\text{is uniformly bounded in}~ & L^2(0,T;V), \label{wk-18} \\ 
\sqrt{\alpha}\phi'_n & ~\text{is uniformly bounded in}~ & L^2(0,T;H),  \label{wk-19} \\  
F(\phi_n) & ~\text{is uniformly bounded in}~ & L^\infty(0,T;L^1(\Omega)).  \label{wk-20}
\end{eqnarray}
(The last inclusion follows from the definition of $\mathcal{E}_\ep$ and \eqref{wk-13}.)

Now to show 
\begin{eqnarray}
\phi_n & ~\text{is uniformly bounded in}~ & L^2(0,T;V),  \label{wk-20.01}
\end{eqnarray}
we observe the two basic estimates hold for every $\eta>0,$
\begin{align}
(\nabla\mu_n,\nabla\phi_n) \le \frac{1}{4c_0\eta}\|\nabla\mu_n\|^2 + c_0\eta\|\nabla\phi_n\|^2,  \notag
\end{align}
and 
\begin{align}
(\nabla\mu_n,\nabla\phi_n) & \ge c_0\|\nabla\phi_n\|^2 - \frac{d_J^2}{2c_0\eta}\|\phi_n\|^2 - 2c_0\eta\|\nabla\phi_n\|^2 + \alpha\|\nabla\phi_n\|^2 - \frac{\delta_0^2}{4c_0\eta}\|\nabla\theta_n\|^2 - c_0\eta\|\nabla\phi_n\|^2  \notag \\
& = \left(c_0(1-3\eta)+\alpha\right)\|\nabla\phi_n\|^2 - \frac{d_J^2}{2c_0\eta}\|\phi_n\|^2 - \frac{\delta_0^2}{4c_0\eta}\|\nabla\theta_n\|^2.  \notag
\end{align}
Together, these two yield
\begin{align}
\left(c_0(1-4\eta)+\alpha\right)\|\nabla\phi_n\|^2 - \frac{d_J^2}{2c_0\eta}\|\phi_n\|^2 - \frac{\delta_0^2}{4c_0\eta}\|\nabla\theta_n\|^2 \le \frac{1}{4c_0\eta}\|\nabla\mu_n\|^2,  \notag 
\end{align}
and with \eqref{wk-15}, \eqref{wk-17} and \eqref{wk-18} we deduce \eqref{wk-20.01}.

Now we seek a uniform bound for $\langle\mu_n\rangle$ in $L^2(0,T;H)$ so that we may bound $\mu_n$ uniformly in $L^2(0,T;V)$ (by virtue of \eqref{H1-norm}).
A simple estimate with \eqref{Fcons-3.1} shows,
\begin{align}
\langle\mu_n\rangle & = \langle a\phi_n \rangle - \langle P_n(J*\phi_n + F'(\phi_n)) \rangle + \alpha \langle \phi'_n \rangle - \delta \langle \theta_n \rangle  \notag \\ 
& = \frac{1}{|\Omega|}(a,\phi_n) - \frac{1}{|\Omega|}(P_n(J*\phi_n),1) + \frac{1}{|\Omega|}(P_n(F'(\phi_n)),1) + \frac{\alpha}{|\Omega|}(\phi'_n,1) - \frac{\delta}{|\Omega|}(\theta_n,1)  \notag \\
& \le \frac{1}{|\Omega|}\|J*1\|\|\phi_n\| + \frac{1}{|\Omega|^{1/2}}\|J*\phi_n\| + \frac{1}{|\Omega|}\|F'(\phi_n)\|_{L^1(\Omega)} + \frac{\alpha}{|\Omega|^{1/2}}\|\phi'_n\| + \frac{\delta_0}{|\Omega|^{1/2}}\|\theta_n\|  \notag \\
& \le \frac{2c_J}{|\Omega|^{1/2}}\|\phi_n\| + \frac{c_3}{|\Omega|}\|F(\phi_n)\|_{L^1(\Omega)} + c_4|\Omega| + \frac{\alpha}{|\Omega|^{1/2}}\|\phi'_n\| + \frac{\delta_0}{|\Omega|^{1/2}}\|\theta_n\|.  \label{mean-mu}
\end{align}
The desired bound now follows because of the uniform bounds in \eqref{wk-15}, and \eqref{wk-18}-\eqref{wk-20}.
Thus, we have shown
\begin{eqnarray}
\mu_n & ~\text{is uniformly bounded in}~ & L^2(0,T;V),  \label{wk-20.02} \\
F'(\phi_n) & ~\text{is uniformly bounded in}~ & L^\infty(0,T;L^1(\Omega)).  \label{wk-20.03}
\end{eqnarray}
Moreover, directly from \eqref{wk-20.02} and the discretized equation
\begin{align}
(\phi_n',\varphi) = -(\nabla\mu_n,\nabla\varphi),  \notag
\end{align}
we also have,
\begin{eqnarray}
\phi'_n & ~\text{is uniformly bounded in}~ & L^2(0,T;V').  \label{wk-20.04}
\end{eqnarray}

Next we obtain a bound for $\sqrt{\alpha}\phi_n$.
Indeed, we take $\varphi=\phi_n$ in \eqref{wk-3} to obtain,
\begin{align}
& \frac{1}{2}\frac{d}{dt} \left\{ \|\phi_n\|^2+\alpha\|\nabla\phi_n\|^2 \right\} + ((\nabla a)\phi_n+a\nabla\phi_n-\nabla J*\phi_n+F''(\phi_n)\nabla\phi_n,\nabla\phi_n)  \notag \\ 
& = \delta(\nabla\theta_n,\nabla\phi_n).  \label{wk-20.1}
\end{align}
Since $a(x)=\int_\Omega J(x-y)dy=J*1$, Young's inequality for convolutions shows 
\[
\|\nabla a\|_{L^\infty(\Omega)}=\|\nabla J*1\|_{L^\infty(\Omega)}\le \|\nabla J\|_{L^1(\Omega)} \|1\|_{L^\infty(\Omega)}.
\]
So we estimate (recall \eqref{not-1})
\begin{align}
((\nabla a)\phi_n & +a\nabla\phi_n-\nabla J*\phi_n+F''(\phi_n)\nabla\phi_n,\nabla\phi_n)  \notag \\ 
& = ((a+F''(\phi_n))\nabla\phi_n,\nabla\phi_n) + ((\nabla a)\phi_n-\nabla J*\phi_n,\nabla\phi_n)  \notag \\ 
& \ge c_0\|\nabla\phi_n\|^2 - \|\nabla a\|_{L^\infty(\Omega)}\|\nabla\phi_n\|\|\phi_n\| - \|\nabla J\|_{L^1(\Omega)}\|\nabla\phi_n\|\|\phi_n\|  \notag \\ 
& \ge c_0\|\nabla\phi_n\|^2 - 2d_J\|\nabla\phi_n\|\|\phi_n\|  \notag \\ 
& \ge \frac{c_0}{2}\|\nabla\phi_n\|^2 - \frac{2d_J^2}{c_0}\|\phi_n\|^2,  \label{wk-21}
\end{align}
and we use the basic estimate,
\begin{align}
\delta(\nabla\theta_n,\nabla\phi_n) \le \frac{\delta^2_0}{c_0}\|\nabla\theta_n\|^2 + \frac{c_0}{4}\|\nabla\phi_n\|^2.  \label{wk-22}
\end{align}
Together \eqref{wk-20.1}-\eqref{wk-22} produce,
\begin{align}
& \frac{d}{dt} \left\{ \|\phi_n\|^2+\alpha\|\nabla\phi_n\|^2 \right\} + \frac{c_0}{2}\|\nabla\phi_n\|^2 \le \frac{4d_J^2}{c_0}\|\phi_n\|^2 + \frac{2\delta_0^2}{c_0}\|\nabla\theta_n\|^2.  \label{wk-23}
\end{align}
Utilizing the bounds \eqref{wk-15} and \eqref{wk-18} following \eqref{wk-14}, and the definition of the $V$ norm \eqref{H1-norm}, we integrate \eqref{wk-23} with respect to $t$ over $(0,T)$ to find,
\begin{align}
& \|\phi_n(t)\|^2 + \alpha\|\phi_n(t)\|^2_{V} + \frac{c_0}{2} \int_0^t \|\nabla\phi_n(s)\|^2 ds  \notag \\ 
& \le \frac{4d_J^2}{c_0} \int_0^t \|\phi_n(s)\|^2 ds + \frac{2\delta_0^2}{c_0} \int_0^t \|\nabla\theta_n(s)\|^2 ds + \|\phi_{0n}\|^2 + \alpha\|\nabla\phi_{0n}\|^2 + \alpha|\langle \phi_{0n} \rangle|^2  \notag \\
& \le Q(\|\zeta_0\|_{\mathbb{H}^{\alpha,\ep}_m},T) + \|\nabla\phi_{0n}\|^2.  \label{wk-24}
\end{align}
This estimate implies 
\begin{eqnarray}
\sqrt{\alpha}\phi_n & ~\text{is uniformly bounded in}~ & L^\infty(0,T;V).  \label{wk-25}
\end{eqnarray}

We use the above results to bound $\theta'_n$.
Let us choose $\vartheta=\theta'_n$ in \eqref{wk-4}, which yields
\begin{align}
\frac{1}{2}\frac{d}{dt}\|\theta_n\|^2 + \|\theta'_n\|^2 & = -\delta(\phi'_n,\theta'_n)  \notag \\ 
& \le \frac{1}{2}\|\phi'_n\|^2 + \frac{\delta_0^2}{2}\|\theta'_n\|.  \notag
\end{align}
Integration over $(0,T)$ and the bounds \eqref{wk-16} and \eqref{wk-20} shows us that,
\begin{align*}
\|\theta_n(t)\|^2 + \int_0^t\|\theta'_n(s)\|^2 ds & \le \int_0^t \|\phi'_n(s)\|^2 ds + \delta_0^2 \int_0^t \|\theta'_n(s)\|^2 ds + \|\theta_{0n}\|^2  \notag \\ 
& \le \frac{1}{\nu_0}Q(\|\zeta_0\|_{\mathbb{H}^{\alpha,\ep}_m}),
\end{align*}
and hence,
\begin{eqnarray}
\theta'_n & ~\text{is uniformly bounded in}~ & L^2(0,T;H).  \label{wk-26}
\end{eqnarray}

Finally, we provide a bound for $\{\rho(\cdot,\phi_n)\}$.
Using (H4) again (see \eqref{Fcons-3.1}), we easily find, for any $p\in(1,2],$
\begin{align}
\|\rho(\cdot,\phi_n)\|_{L^p(\Omega)} & \le \|a\|_{L^\infty(\Omega)} \|\phi_n\| + \|F'(\phi_n)\|_{L^p(\Omega)}  \notag \\ 
& \le a^*\|\phi_n\| + \left( c_3\int_\Omega|F(\phi_n)|dx+c_4|\Omega| \right)^{1/p}.  \label{wk-26.1}
\end{align}
Employing \eqref{wk-15} and \eqref{wk-20}, it follows from \eqref{wk-26.1} that
\begin{eqnarray}
\rho(\cdot,\phi_n) & \in & L^\infty(0,T;L^p(\Omega)).  \label{wk-26.2}
\end{eqnarray}
This concludes Step 1.

{\em{Step 2. (Convergence of approximate solutions)}} In this step we pass to the limit to show that Problem P$_{\alpha,\ep}$ has a solution in the distributional sense, then we argue by density that this solution satisfies the identities for all appropriate test functions.
From the uniform bounds \eqref{wk-15}, \eqref{wk-16}, \eqref{wk-18}, \eqref{wk-19}, \eqref{wk-20.01}, \eqref{wk-20.02}, \eqref{wk-20.04}, \eqref{wk-25}, \eqref{wk-26}, and \eqref{wk-26.2}, by Alaoglu's
theorem (cf. e.g. \cite[Theorem 6.64]{Renardy&Rogers04}) there is a subsequence of $(\phi_n,\theta_n)^{tr}$ (generally not relabeled) and functions
\begin{eqnarray}
\phi & \in & L^\infty(0,T;H) \cap L^2(0,T;V),  \label{wk-27} \\ 
\sqrt{\alpha}\phi & \in & L^\infty(0,T;V),  \label{wk-27.1} \\
\theta & \in & L^\infty(0,T;H) \cap L^2(0,T;V),  \label{wk-28} \\ 
\mu & \in & L^2(0,T;V),  \label{wk-29} \\
\rho & \in & L^\infty(0,T;L^p(\Omega)),  \label{wk-29.1}
\end{eqnarray}
and
\begin{eqnarray}
\phi' & \in & L^2(0,T;V'),  \label{wk-30} \\ 
\sqrt{\alpha}\phi' & \in & L^2(0,T;H),  \label{wk-30.1} \\ 
\theta' & \in & L^2(0,T;H),  \label{wk-31} 
\end{eqnarray}
such that, as $n\rightarrow\infty$, 
\begin{eqnarray}
\phi_n \rightharpoonup \phi & \text{weakly-* in} & L^\infty(0,T;H),
\label{wk-32} \\
\phi_n \rightharpoonup \phi & \text{weakly in} & L^2(0,T;V),
\label{wk-33} \\
\sqrt{\alpha}\phi_n \rightharpoonup \sqrt{\alpha}\phi & \text{weakly-* in} & L^\infty(0,T;V),
\label{wk-33.5} \\
\theta_n \rightharpoonup \theta & \text{weakly-* in} & L^\infty(0,T;H),
\label{wk-34} \\
\theta_n \rightharpoonup \theta & \text{weakly in} & L^2(0,T;V),
\label{wk-35} \\
\mu_n \rightharpoonup \mu & \text{weakly in} & L^2(0,T;V), \label{wk-36} \\
\rho_n \rightharpoonup \rho & \text{weakly-* in} & L^\infty(0,T;L^p(\Omega)), \label{wk-36.1}  
\end{eqnarray}
and 
\begin{eqnarray}
\phi'_n \rightharpoonup \phi_t & \text{weakly in} & L^2(0,T;V'),  \label{wk-37} \\
\sqrt{\alpha}\phi'_n \rightharpoonup \sqrt{\alpha}\phi_t & \text{weakly in} & L^2(0,T;H),  \label{wk-37.5} \\
\theta'_n \rightharpoonup \theta_t & \text{weakly in} & L^2(0,T;H). \label{wk-38}  
\end{eqnarray}
Additionally, on account of the Aubin-Lions (compact) embedding (cf. e.g. \cite[Theorem 3.1.1]{Zheng04}),
\[
\{ \chi\in L^2(0,T;V), \ \chi_t\in L^2(0,T;V') \} \hookrightarrow L^2(0,T;H),
\]
we have
\begin{eqnarray}
\phi_n \rightarrow \phi & \text{strongly in} & L^2(0,T;H),
\label{wk-39} \\
\theta_n \rightarrow \theta & \text{strongly in} & L^2(0,T;H).
\label{wk-40}
\end{eqnarray}
An immediate consequence of \eqref{wk-39} is 
\begin{eqnarray}
J*\phi_n \rightarrow J*\phi & \text{strongly in} & L^2(0,T;V),
\label{wk-40.1}
\end{eqnarray}

We are now in position to pass to the limit in \eqref{wk-3}-\eqref{wk-8} to show that $\phi$, $\theta$, $\mu$, and $\rho$ satisfy \eqref{rel-2} and \eqref{ws-6}-\eqref{ws-9}.
To begin, using the pointwise convergence in \eqref{wk-39} and the (sequential) continuity assumption on $F$ in (H2), we immediately find
\begin{align}
\rho_n \rightarrow a\phi + F'(\phi) \quad \text{a.e. in $\Omega\times(0,T).$}  \label{wk-41}
\end{align}
Thanks to \eqref{wk-36.1}, we have \eqref{ws-6}; i.e.,
\[
\rho=a\phi+F'(\phi).
\]
Since 
\begin{equation*}
\mu_n = P_n(\rho_n - J*\phi_n + \alpha\phi'_n - \delta\theta_n)
\end{equation*}
then, for every $\varphi\in\Psi_j$, every $k\in\{1,\dots,j\}$ with $j\ge1$ fixed, and for every $\chi\in C_0^\infty((0,T)),$ there holds 
\begin{align}
\int_0^T (\mu_n(t),\varphi)\chi(t) dt = \int_0^T \left(\rho_n(t) - J*\phi_n(t) + \alpha\phi'_n(t) - \delta\theta_n(t),\varphi\right)\chi(t) dt.  \label{wk-42}
\end{align}
Letting $n\rightarrow+\infty$ in \eqref{wk-42}, whereby using \eqref{wk-36}, \eqref{wk-36.1}, \eqref{wk-37.5}, \eqref{wk-40} and \eqref{wk-40.1}, and by the density of $\Psi_\infty$ in $H$, we arrive at the equality \eqref{rel-2}; indeed,
\begin{align}
\mu & = \rho - J*\phi + \alpha\phi' + \delta\theta  \notag \\ 
& = a\phi + F'(\phi) - J*\phi + \alpha\phi_t + \delta\theta  \notag
\end{align}
holds in the distributional sense.
Moreover, we may update \eqref{wk-29.1} to include
\begin{eqnarray}
\rho & \in & L^2(0,T;H).  \label{wk-44}
\end{eqnarray}

We now show \eqref{ws-7} and \eqref{ws-8} hold.
To this end, multiply \eqref{wk-3} and \eqref{wk-4} by $\chi\in C_0^\infty((0,T))$ and $\omega\in C_0^\infty((0,T))$, respectively, and integrate with respect to $t$ over $(0,T)$.
This yields,
\begin{align}
\int_0^T (\phi_n'(t),\varphi) \chi(t) dt & + \int_0^T (\rho_n(t),-\Delta\varphi) \chi(t) dt - \int_0^T (\nabla J*\phi_n(t),\nabla\varphi) \chi(t) dt  \notag \\
& + \alpha\int_0^T (\phi'_n(t),-\Delta\varphi) \chi(t) dt = \delta\int_0^T (\nabla\theta_n(t),\nabla\varphi) \chi(t) dt,  \label{wk-45}
\end{align}
and
\begin{align}
\ep\int_0^T (\theta_n'(t),\vartheta) \omega(t) dt + \int_0^T (\nabla\theta_n(t),\nabla\vartheta) \omega(t) dt & = -\delta \int_0^T (\phi_n'(t),\vartheta) \omega(t) dt.  \label{wk-46}
\end{align}
On \eqref{wk-45} we pass to the limit $n\rightarrow+\infty$ using \eqref{wk-35}, \eqref{wk-36.1}, \eqref{wk-37} and \eqref{wk-40.1} to arrive at \eqref{ws-7} for every $\varphi\in V$ by virtue of a standard density argument.
Similarly, from \eqref{wk-46} for every $\vartheta\in V$, we gain \eqref{ws-8} using \eqref{wk-35}, \eqref{wk-37} and \eqref{wk-38}.

To show \eqref{ws-9} hold, we integrate \eqref{wk-3} and \eqref{wk-4} over $(0,t)$ and pass to the limit $n\rightarrow+\infty$.
This finishes Step 2.

{\em{Step 3. (Energy identity)}} To begin, let $\phi_0\in D(A_N)$, $\theta_0\in H$ and let $\zeta=(\phi,\theta)^{tr}$ be the corresponding weak solution.
Recall from \eqref{wk-39}, we have for almost any $t\in(0,T),$
\begin{align}
\phi_n(t)\rightarrow\phi(t) \quad \text{strongly in $H$ and a.e. in $\Omega.$}  \label{wk-50}
\end{align}
Since $F$ is measurable (see (H3)), Fatou's lemma implies
\begin{equation}  \label{wk-51}
\int_\Omega F(\phi(t)) dx \le \liminf_{n\rightarrow+\infty}\int_\Omega F(\phi_n(t)) dx.
\end{equation}
Additionally, thanks to \eqref{wk-40.1} and the fact that $P_n\in \mathcal{L}(V,V)$, then 
\begin{equation}  \label{wk-52}
P_n(J*\phi_n) \rightarrow J*\phi \quad \text{in $L^2(0,T;V)$}.
\end{equation}
Integrating \eqref{wk-12} on $(0,t)$, and passing to the limit while keeping in mind \eqref{wk-50}-\eqref{wk-52} and \eqref{wk-35}, \eqref{wk-36} and \eqref{wk-37.5}, as well as the weak lower-semicontinuity of the norm, we arrive at the differential inequality
\begin{align}
& \mathcal{E}_\ep(t) + \int_0^t \left( \|\nabla\mu(s)\|^2 + \alpha\|\phi_t(s)\|^2 + \|\nabla\theta(s)\|^2 \right) ds \le \mathcal{E}_\ep(0).  \label{wk-60}
\end{align}

We now show the energy equality \eqref{wk-2} holds.
The proof is based on the proof of \cite[Corollary 2]{CFG12}.
Here we require the regularity given in (H5).
Indeed, take $\varphi=\mu$ in \eqref{ws-7}.
Because of \eqref{ws-3}, we find the product $\langle \phi_t,\mu \rangle$ must contain the dual pairing $\langle \phi_t,F'(\phi) \rangle$.
It is here where we employ \eqref{convex} where $G$ is monotone increasing.
Now define the functional $\mathcal{G}:H\rightarrow\mathbb{R}$ by
\[
\mathcal{G}(\phi):=\left\{ \begin{array}{ll} \displaystyle\int_\Omega G(\phi)dx & \text{if}\ \mathcal{G}(\phi)\in L^1(\Omega), \\ +\infty & \text{otherwise}. \end{array} \right.
\]
Now by \cite[Proposition 2.8, Chapter II]{Barbu76}, it follows that $\mathcal{G}$ is convex, lower semicontinuous on $H$, and $\xi\in\partial\mathcal{G}(\phi)$ if and only if $\xi=\mathcal{G}'(\phi) = G(\phi)$ almost everywhere in $\Omega$.
Applying \cite[Proposition 4.2]{CKRS07}, also, for almost all $t\in(0,T),$
\begin{align}
\langle \phi_t,F'(\phi) \rangle & = \langle \phi_t,G(\phi)\rangle - a^*\langle \phi_t,\phi\rangle  \notag \\ 
& = \frac{d}{dt} \left\{ \mathcal{G}(\phi)-\frac{a^*}{2}\|\phi\|^2 \right\}  \notag \\ 
& = \frac{d}{dt} \int_\Omega F(\phi)dx.
\end{align}
Hence, 
\begin{align}
\frac{1}{2}\frac{d}{dt} & \left\{ \|\sqrt{a}\phi\|^2-(\phi,J*\phi)+\int_\Omega F(\phi)dx \right\}+\alpha\|\phi_t\|^2-\delta\langle\phi_t,\theta\rangle+\|\nabla\mu\|^2=0.  \notag
\end{align}
Next we add in the identity obtained after taking $\vartheta=\theta$ in \eqref{ws-8} and apply \eqref{wk-1} to find 
\begin{align}
& \frac{d}{dt} \left\{ \frac{1}{4}\int_\Omega\int_\Omega J(x-y)(\phi(x)-\phi(y))^2dxdy + \int_\Omega F(\phi)dx+\frac{\ep}{2}\|\theta\|^2\right\}  \notag \\ 
&+\alpha\|\phi_t\|^2+\|\nabla\theta\|^2+\|\nabla\mu\|^2=0.  \notag 
\end{align}
Integrating this differential identity on $(0,t)$ produces \eqref{wk-2} as claimed.
This concludes Step 3.

{\em{Step 4. (Assuming $\phi_0\in H$ is such that $F(\phi_0)\in L^1(\Omega)$, and the continuity conditions)}} 
Take $\zeta_0=(\phi_0,\theta_0)^{tr}\in \mathbb{H}^{\alpha,\ep}_m$ where $F(\phi_0)\in L^1(\Omega).$
Proceeding exactly as in \cite[page 440]{CFG12} the bounds \eqref{wk-15}-\eqref{wk-20.01}, \eqref{wk-20.02}-\eqref{wk-20.04} and \eqref{wk-25}, \eqref{wk-26} and \eqref{wk-26.2} hold.
Then applying the Aubin-Lions compactness embedding and find $\phi$, $\theta,$ $\mu$, and $\rho$ that satisfy \eqref{wk-27}-\eqref{wk-29.1}.
Passing to the limit in the variational formulation for $\zeta_k=(\phi_k,\theta)^{tr}$, we find the pair $\zeta=(\phi,\theta)^{tr}$ is a solution corresponding to the initial data $\zeta_0=(\phi_0,\theta_0)^{tr}\in \mathbb{H}^{\alpha,\ep}_m$ for which $F(\phi_0)\in L^1(\Omega).$

Finally, the continuity properties
\begin{equation}  \label{continuity-1}
\phi\in C([0,T];H) \quad\text{and}\quad \theta\in C([0,T];H)
\end{equation}
follow from the classical embedding (cf. \cite[Lemma 5.51]{Tanabe79}),
\[
\{ \chi\in L^2(0,T;V), \ \chi_t\in L^2(0,T;V') \} \hookrightarrow C([0,T];H),
\] 
and the conditions \eqref{wk-27}$_2$ and \eqref{wk-28}$_2$ with \eqref{wk-30} and \eqref{wk-31} established above.
This finishes the proof of the theorem.
\end{proof}

\begin{remark}  \label{r:gradient}
From \eqref{wk-2} we see that if there is a $t_0>0$ in which 
\[
\mathcal{E}_\ep(t_0)=\mathcal{E}_\ep(0),
\]
then, for all $t\in(0,t_0),$ 
\begin{equation}  \label{no-grad}
\int_0^{t} \left( \|\nabla\mu(s)\|^2 + \alpha\|\phi_t(s)\|^2 + \|\nabla\theta(s)\|^2 \right)ds = 0.
\end{equation}
Hence, we deduce $\phi_t(t)=0$ and $\theta_t(t)=0$ for all $t\in(0,t_0)$.
Therefore, $\zeta=(\phi,\theta)^{tr}$ is a fixed point of the trajectory $\zeta(t)=S_{\alpha,\ep}(t)\zeta_0$.
Since the semigroup $S_{\alpha,\ep}(t)$ is precompact (per Lemma \ref{t:thermal-reg} and Lemma \ref{t:pseudometric}), the system $(\mathcal{X}^{\alpha,\ep}_m,S_{\alpha,\ep},\mathcal{E}_\ep)$ is gradient/conservative for each $\alpha\in(0,1]$ and $\ep\in(0,1]$.
\end{remark}

The following proposition establishes the uniqueness of weak solutions to Problem P$_{\alpha,\ep}$. 
Furthermore, it shows that the semigroup $S_{\alpha,\ep}$ (defined below) is strongly continuous with respect to the metric $\mathcal{X}^{\alpha,\ep}_m$.

\begin{proposition}  \label{t:cont-dep}
Assume (H1)-(H4) hold. 
Let $T>0$, $m\ge0$, $\delta_0>0$, $\delta\in(0,\delta_0]$, $(\alpha,\ep)\in(0,1]\times(0,1]$, and $\zeta_{01}=(\phi_{01},\theta_{01})^{tr}$, $\zeta_{02}=(\phi_{02},\theta_{02})^{tr}\in\mathbb{H}^{\alpha,\ep}_m$ be such that $F(\phi_{01}),F(\phi_{02})\in L^1(\Omega)$.
Let $\zeta_1(t)=(\phi_1(t),\theta_1(t))$ and $\zeta_2(t)=(\phi_2(t),\theta_2(t))$ denote the weak solution to Problem P$_{\alpha,\ep}$ corresponding to the data $\zeta_{01}$ and $\zeta_{02}$, respectively.
Then there are positive constants $\bar\nu_1=\bar\nu_1(c_0,J,\alpha,\ep,\delta_0)\sim\{\alpha^{-2},\ep^{-1}\}$ and $\bar\nu_2=\bar\nu_2(F,J,\Omega,\delta_0)$, independent of $T$, $\zeta_{01}$, and $\zeta_{02}$, such that, for all $t\in[0,T],$ 
\begin{align}
& \|\zeta_{1}(t)-\zeta_{2}(t)\|^2_{\mathbb{H}^{\alpha,\ep}_m} + \int_0^t \left( 2\|\partial_t\phi_1(s)-\partial_t\phi_2(s)\|^2_{V'} + \alpha\|\partial_t\phi_1(s)-\partial_t\phi_2(s)\|^2 + 2\|\theta_1(s)-\theta_2(s)\|^2_{V} \right) ds  \notag \\ 
& \le e^{\bar\nu_1 t} \left( \|\zeta_1(0)-\zeta_2(0)\|^2_{\mathbb{H}^{\alpha,\ep}_m} + \frac{2\bar\nu_2}{\bar\nu_1} \left( |M_1-M_2|+|N_1-N_2| \right)^2  \right)  \label{diff-0}
\end{align}
where $M_i:=\langle\phi_i(0)\rangle$, $N_i:=\langle\theta_i(0)\rangle$, $i=1,2$.
\end{proposition}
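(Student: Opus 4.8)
The plan is to form the difference of two solutions and run a single Grönwall-type estimate in the $\mathbb{H}^{\alpha,\ep}_m$-norm, using the negative-order test function $\mathcal{N}(\cdot)$ that is standard for (nonlocal) Cahn--Hilliard equations. Write $\phi:=\phi_1-\phi_2$, $\theta:=\theta_1-\theta_2$, $\mu:=\mu_1-\mu_2$, and $\rho:=\rho_1-\rho_2=a\phi+\big(F'(\phi_1)-F'(\phi_2)\big)$; since \eqref{ws-3} holds for each solution, $\rho_i\in L^2(0,T;V)$, hence $\rho\in L^2(0,T;V)$, which legitimizes the manipulations below. Subtracting the two copies of \eqref{ws-7}--\eqref{ws-8} and recalling \eqref{con-mass}--\eqref{con-heat}, the averages are frozen: $\langle\phi(t)\rangle\equiv M_1-M_2$ and $\langle\theta(t)\rangle\equiv N_1-N_2$; in particular $\langle\phi_t\rangle\equiv 0$, so $\phi_t\in V_0'$ and $\mathcal{N}\hat\phi\in D(A_N)$ is an admissible test function.

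First I would test the difference of \eqref{ws-7} with $\varphi=\mathcal{N}\hat\phi$ and the difference of \eqref{ws-8} with $\vartheta=\theta$. Using \eqref{NLr-1}, $\langle\phi_t,\mathcal{N}\hat\phi\rangle=\tfrac12\tfrac{d}{dt}\|\phi\|_{V'}^2$, $\alpha(\nabla\phi_t,\nabla\mathcal{N}\hat\phi)=\tfrac{\alpha}{2}\tfrac{d}{dt}\|\phi\|^2$, $(\nabla\rho,\nabla\mathcal{N}\hat\phi)=(\rho,\hat\phi)$, $(\nabla J*\phi,\nabla\mathcal{N}\hat\phi)=(J*\phi,\hat\phi)$, while the $\theta$-equation yields $\tfrac{\ep}{2}\tfrac{d}{dt}\|\theta\|^2+\|\nabla\theta\|^2$. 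Adding, one obtains
\[
\frac12\frac{d}{dt}\|\zeta_1-\zeta_2\|_{\mathbb{H}^{\alpha,\ep}_m}^2+(\rho,\hat\phi)+\|\nabla\theta\|^2=(J*\phi,\hat\phi)+\delta(\theta,\hat\phi)-\delta\langle\phi_t,\theta\rangle .
\]
The crucial positive contribution is $(\rho,\hat\phi)$: writing $F(s)=G(s)-\tfrac{a^*}{2}s^2$ as in \eqref{convex} and using that $a(x)+\int_0^1F''(\phi_2+\tau\phi)\,d\tau\ge c_0$ pointwise by (H2), one gets $(\rho,\phi)=\int_\Omega\big(a+\overline{F''}\big)\phi^2\,dx\ge c_0\|\phi\|^2$, so $(\rho,\hat\phi)\ge c_0\|\phi\|^2-|\Omega|\,|M_1-M_2|\,|\langle\rho\rangle|$ with $\langle\rho\rangle=\langle\mu\rangle+\langle J*\phi\rangle+\delta\langle\theta\rangle$. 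The remaining terms are handled by Young's inequality and the Poincaré--Wirtinger inequality \eqref{Poincare}, together with $\|J*\phi\|\le c_J\|\phi\|$, $\|\nabla J*\phi\|\le d_J\|\phi\|$ from Young's convolution inequality, and $\langle\rho\rangle$, $\langle\mu\rangle$ being controlled using \eqref{Fcons-1}--\eqref{Fcons-3} and $|M_i|,|N_i|\le m$. To produce the dissipation terms $\int_0^t\big(2\|\partial_t\phi_1-\partial_t\phi_2\|_{V'}^2+\alpha\|\partial_t\phi_1-\partial_t\phi_2\|^2\big)ds$ on the left of \eqref{diff-0}, I would separately pair the difference of the constitutive relation $\alpha\phi_t=\mu-\rho+J*\phi+\delta\theta$ with $\phi_t$ and use $(\mu,\phi_t)=-\|\phi_t\|_{V'}^2$ (from \eqref{ws-7}), giving $\|\phi_t\|_{V'}^2+\alpha\|\phi_t\|^2+(\rho,\phi_t)=(J*\phi,\phi_t)+\delta(\theta,\phi_t)$, and estimate the nonlinear part of $(\rho,\phi_t)$ after splitting $(F'(\phi_1)-F'(\phi_2),\phi_t)=(G'(\phi_1)-G'(\phi_2),\phi_t)-a^*(\phi,\phi_t)$ and invoking the locally Lipschitz/monotone structure of $G'$ along with the $L^\infty(0,T;V)$-bound on $\sqrt{\alpha}\,\phi_i$ from Theorem \ref{t:existence}.

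Collecting these estimates, dropping the genuinely nonnegative term $c_0\|\phi\|^2$, and choosing the Young parameters so that half of each dissipation term survives on the left, I would reach a differential inequality
\[
\frac{d}{dt}\|\zeta_1-\zeta_2\|_{\mathbb{H}^{\alpha,\ep}_m}^2+2\|\phi_t\|_{V'}^2+\alpha\|\phi_t\|^2+2\|\theta\|_V^2\le\bar\nu_1\|\zeta_1-\zeta_2\|_{\mathbb{H}^{\alpha,\ep}_m}^2+2\bar\nu_2\big(|M_1-M_2|+|N_1-N_2|\big)^2,
\]
where the $\alpha^{-2},\ep^{-1}$ blow-up of $\bar\nu_1$ enters through the reabsorptions $\|\phi\|^2=\alpha^{-1}\cdot\alpha\|\phi\|^2\le\alpha^{-1}\|\zeta_1-\zeta_2\|_{\mathbb{H}^{\alpha,\ep}_m}^2$, $\|\theta\|^2\le\ep^{-1}\|\zeta_1-\zeta_2\|_{\mathbb{H}^{\alpha,\ep}_m}^2$, and an extra $\alpha^{-1}$ from controlling $\|\phi_t\|$ when estimating the nonlinear coupling, while $\bar\nu_2$ gathers the mean-value corrections and depends only on $F,J,\Omega,\delta_0$. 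The classical Grönwall lemma then gives \eqref{diff-0}; taking $\zeta_{01}=\zeta_{02}$ yields uniqueness, and \eqref{diff-0} (together with the continuity properties in Definition \ref{d:ws} and the $F$-term controlled by $\sqrt{\alpha}\,\phi_i\in L^\infty(0,T;V)$) yields strong continuity of $t\mapsto S_{\alpha,\ep}(t)\zeta_0$ and of the data-to-solution map in $\mathcal{X}^{\alpha,\ep}_m$. I expect the main obstacle to be the rigorous treatment of the nonlinear difference $F'(\phi_1)-F'(\phi_2)$ — only known a priori to lie in an $L^p$-space with $p$ possibly close to $1$ — paired against $\phi_t$ and against the $M$- and $N$-corrections: this is precisely where the quadratic-perturbation-of-convex decomposition \eqref{convex} and the structural inequalities \eqref{Fcons-1}--\eqref{Fcons-3} must be used with care, and where the $\alpha^{-2}$ scaling of $\bar\nu_1$ originates.
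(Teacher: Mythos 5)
Your overall architecture is the same as the paper's. The paper tests the difference system with $A_N^{-1}(\bar\phi_t+\bar\phi-\langle\bar\phi\rangle)$, $\bar\phi_t+\bar\phi$ and $\bar\theta$ in a single step; this is exactly the sum of your two pairings (the $\mathcal{N}\hat\phi$ test generating $\tfrac{d}{dt}\|\cdot\|^2_{\mathbb{H}^{\alpha,\ep}_m}$, and the pairing with $\bar\phi_t$ generating $\|\bar\phi_t\|^2_{V'}+\alpha\|\bar\phi_t\|^2$ via $(\bar\mu,\bar\phi_t)=-\|\bar\phi_t\|^2_{V'}$). The Gr\"onwall structure, the frozen averages $M_1-M_2$, $N_1-N_2$, the convolution estimates for $J$, and the origin of the $\ep^{-1}$ part of $\bar\nu_1$ all match the paper.

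The one point where you genuinely diverge is the crux term $\left(F'(\phi_1)-F'(\phi_2),\bar\phi_t\right)$, and your proposed mechanism there does not close. You split off $G'(\phi_1)-G'(\phi_2)$ from \eqref{convex} and invoke the local Lipschitz continuity of $G'$ together with the $L^\infty(0,T;V)$ bound on $\sqrt{\alpha}\,\phi_i$; but in three dimensions $V=H^1(\Omega)$ does not embed into $L^\infty(\Omega)$, so no uniform Lipschitz constant for $G'$ (or $F'$) along the trajectories is available, and the monotonicity of $G'$ yields a sign only for the pairing with $\bar\phi$, not with $\bar\phi_t$. The paper does not separate $G'$ at all: it writes $a\bar\phi+F'(\phi_1)-F'(\phi_2)=\bigl(a+\int_0^1F''(s\phi_1+(1-s)\phi_2)\,ds\bigr)\bar\phi$, invokes only the one-sided bound $a+F''\ge c_0$ of (H2) to reduce the whole expression to $2c_0(\bar\phi,\bar\phi_t)+2c_0\|\bar\phi\|^2$ as in \eqref{diff-11}, and then absorbs $2c_0(\bar\phi,\bar\phi_t)$ by Young's inequality into $\tfrac{\alpha}{2}\|\bar\phi_t\|^2+\tfrac{2c_0^2}{\alpha}\|\bar\phi\|^2$; rewriting $\tfrac{2c_0^2}{\alpha}\|\bar\phi\|^2=\tfrac{2c_0^2}{\alpha^2}\cdot\alpha\|\bar\phi\|^2$ is precisely where the $\alpha^{-2}$ in $\bar\nu_1$ of \eqref{diff-15} comes from, not from the nonlinear coupling you describe. (A Lipschitz-type bound on $F'$ enters the paper only in the scalar estimate \eqref{diff-19.1} for the mean $\langle\bar\mu\rangle$.) To complete your argument you should replace the Lipschitz step by this one-sided coercivity plus Young absorption; the rest of your outline then reproduces the paper's proof.
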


\begin{proof}
We see that upon setting, for all $t\in[0,T],$
\begin{align}
\bar\zeta(t) = (\bar\phi(t),\bar\theta(t)) & := (\phi_1(t),\theta_1(t))-(\phi_2(t),\theta_2(t))  \notag \\ 
& = \zeta_1(t)-\zeta_2(t),  \notag
\end{align}
the difference $\bar\zeta=(\bar\phi,\bar\theta)$ formally satisfies the equations 
\begin{eqnarray}
\bar\phi_t = \Delta\bar\mu &\text{in}& \Omega\times(0,T)  \label{diff-1} \\ 
\bar\mu = a\bar\phi - J*\bar\phi + F'(\phi_1) - F'(\phi_2) + \alpha \bar\phi_t - \delta\bar\theta &\text{in}& \Omega\times(0,T)  \label{diff-2} \\ 
\ep\bar\theta_t - \Delta\bar\theta = -\delta \bar\phi_t &\text{in}& \Omega\times(0,T)  \label{diff-3} \\ 
\partial_n\bar\mu = 0 &\text{on}& \Gamma\times(0,T)  \label{diff-4} \\ 
\partial_n\bar\theta = 0 &\text{on}& \Gamma\times(0,T)  \label{diff-5} \\ 
\bar\phi(x,0) = \phi_{01}(x)-\phi_{02}(x) &\text{at}& \Omega\times\{0\}  \label{diff-6} \\ 
\bar\theta(x,0) = \theta_{01}(x)-\theta_{02}(x) &\text{at}& \Omega\times\{0\}. \label{diff-7}  
\end{eqnarray}
Multiply \eqref{diff-1}-\eqref{diff-3} by, $A_N^{-1}(\bar\phi_t+\bar\phi-\langle\bar\phi\rangle)$, $\bar\phi_t+\bar\phi$, and $\bar\theta$, respectively (notice that, by \eqref{con-mass} $\bar\phi_t+\bar\phi-\langle\bar\phi\rangle\in V'_0$), and sum the resulting identities to yield, for almost all $t\in[0,T],$
\begin{align}
& \frac{d}{dt} \left\{ \|\bar\phi\|^2_{V'} + \alpha\|\bar\phi\|^2 + \ep\|\bar\theta\|^2 \right\} + 2\|\bar\phi_t\|^2_{V'} + 2\alpha\|\bar\phi_t\|^2 + 2\|\bar\theta\|^2_{V}  \notag \\ 
& + 2(a\bar\phi+F'(\phi_1)-F'(\phi_2),\bar\phi_t+\bar\phi) - 2(J*\bar\phi,\bar\phi_t+\bar\phi) - 2\delta(\bar\theta,\bar\phi)  \notag \\ 
& = 2|\Omega|\langle \bar\phi \rangle \langle \bar\mu \rangle + 2\langle\bar\theta\rangle^2.  \label{diff-9}
\end{align}
Estimating the resulting products using assumption (H2) yields, 
\begin{align}
2(a\bar\phi + F'(\phi_1) - F'(\phi_2),\bar\phi_t + \bar\phi) & \ge 2c_0(\bar\phi,\bar\phi_t) + 2c_0\|\bar\phi\|^2  \notag \\ 
& \ge 2c_0\left( 1-\frac{c_0}{\alpha} \right)\|\bar\phi\|^2 - \frac{\alpha}{2}\|\bar\phi_t\|^2,  \label{diff-11}
\end{align}
recalling $J\in W^{1,1}(\mathbb{R}^3)$ and following \cite[Proposition 5, (4.2) and (4.3)]{Frigeri&Grasselli12} we also write,
\begin{align}
2|(J*\bar\phi,\bar\phi)| & \le 2\|A_N^{1/2}(J*\bar\phi)\|\|A_N^{-1/2}\bar\phi\|  \notag \\ 
& \le d_J^2\|\bar\phi\|^2 + \|\bar\phi\|^2_{V'} + C|\Omega||\langle\bar\phi\rangle|,  \label{diff-12}
\end{align}
\begin{align}
-2(J*\bar\phi,\bar\phi_t) & \ge -2\|J*\bar\phi\|\|\bar\phi_t\|  \notag \\ 
& = -\frac{2c_J^2}{\alpha}\|\bar\phi\|^2 - \frac{\alpha}{2}\|\bar\phi_t\|^2,  \label{diff-13}
\end{align}
and
\begin{align}
-2\delta(\bar\theta,\bar\phi) & \ge -\delta_0^2\|\bar\theta\|^2 - \|\bar\phi\|^2.  \label{diff-14}
\end{align}
Combining \eqref{diff-9}-\eqref{diff-14}, we have, for almost all $t\in[0,T],$
\begin{align}
& \frac{d}{dt} \left\{ \|\bar\phi\|^2_{V'} + \alpha\|\bar\phi\|^2 + \ep\|\bar\theta\|^2 \right\} + 2\|\bar\phi_t\|^2_{V'} + \alpha\|\bar\phi_t\|^2 + 2\|\bar\theta\|^2_{V}  \notag \\ 
& \le \|\bar\phi\|^2_{V'} + \frac{1}{\alpha}\left( 2c_0\left( \frac{c_0}{\alpha}-1 \right) + d_J^2 + \frac{2c_J^2}{\alpha} + 1 \right)\cdot\alpha\|\bar\phi\|^2 + \frac{\delta^2_0}{\ep}\cdot\ep\|\bar\theta\|^2  \notag \\ 
& + C|\Omega|\langle \bar\phi \rangle \langle \bar\mu \rangle + 2\langle\bar\theta\rangle^2.  \label{diff-14.9}
\end{align}
We readily find that there is a constant,
\begin{equation}  \label{diff-15}
\bar\nu_1=\bar\nu_1(c_0,c_J,d_J,\alpha,\ep,\delta_0):=\max\left\{ 1,\frac{1}{\alpha}\left( 2c_0\left( \frac{c_0}{\alpha}-1 \right) + d_J^2 + \frac{2c_J^2}{\alpha} + 1 \right),\frac{\delta_0^2}{\ep} \right\}\ge1,
\end{equation}
and, using the local Lipschitz assumption (H2), it is easy to show that, 
\begin{align}
|\langle \bar\mu \rangle| & \le C_F|\langle \bar\phi \rangle| + \delta_0|\langle \bar\theta \rangle|  \notag \\ 
& =:\bar\mu_*,  \label{diff-19.1}
\end{align}
for some positive constant $C_F$ depending on $c_J$ and the Lipschitz bound on $F'$.
Thus, \eqref{diff-14.9} becomes, for almost all $t\in[0,T],$ 
\begin{align}
& \frac{d}{dt} \left\{ \|\bar\phi\|^2_{V'} + \alpha\|\bar\phi\|^2 + \ep\|\bar\theta\|^2 \right\} + 2\|\bar\phi_t\|^2_{V'} + \alpha\|\bar\phi_t\|^2 + 2\|\bar\theta\|^2_{V}  \notag \\ 
& \le \bar\nu_1\left( \|\bar\phi\|^2_{V'} + \alpha\|\bar\phi\|^2 + \ep\|\bar\theta\|^2 \right) + C|\Omega||\langle \bar\phi \rangle| \bar\mu_* + 2\langle \bar\theta \rangle^2.  \label{diff-18}
\end{align}
Integrating \eqref{diff-18} over $(0,t)$, we obtain, for all $t\in[0,T],$
\begin{align}
\|\bar\phi(t)\|^2_{V'} + \alpha\|\bar\phi(t)\|^2 + \ep\|\bar\theta(t)\|^2 & + \int_0^t \left( 2\|\bar\phi_t(s)\|^2_{V'} + \alpha\|\bar\phi_t(s)\|^2 + 2\|\bar\theta(s)\|^2_{V} \right) ds  \notag \\ 
& \le e^{\bar\nu_1t}\|\bar\zeta(0)\|^2_{\mathbb{H}^{\alpha,\ep}_m} + \frac{2}{\bar\nu_1}| \left( |\Omega||\langle \bar\phi \rangle| \bar\mu_* + |\langle \bar\theta \rangle|^2 \right) \left( e^{\bar\nu_1t}-1 \right)  \notag \\ 
& \le e^{\bar\nu_1 t} \left( \|\bar\zeta(0)\|^2_{\mathbb{H}^{\alpha,\ep}_m} + \frac{2\bar\nu_2}{\bar\nu_1} \left( |\langle \bar\phi \rangle|+|\langle \bar\theta \rangle| \right)^2  \right),  \label{diff-19}
\end{align}
where $\bar\nu_2=\bar\nu_2(F,J,\Omega,\delta_0):=\max\{ C_F,|\Omega|,\frac{\delta_0}{2}|\Omega|, 1 \}\ge 1$.
From \eqref{diff-19} we find the estimate \eqref{diff-0} holds.
This finishes the proof.
\end{proof}

As before, we can now formalize the semi-dynamical system generated by Problem P$_{\alpha,\ep}$.

\begin{corollary}  \label{t:Lip-semif}
Let the assumptions of Theorem \ref{t:existence} be satisfied. 
We can define a strongly continuous semigroup (of solution operators) $S_{\alpha,\ep}=(S_{\alpha,\ep}(t))_{t\ge0}$, for each $\alpha>0$ and $\ep>0$,
\begin{equation*}
S_{\alpha,\ep}(t):\mathcal{X}^{\alpha,\ep}_m\rightarrow \mathcal{X}^{\alpha,\ep}_m
\end{equation*}
by setting, for all $t\geq 0,$ 
\begin{equation*}
S_{\alpha,\ep}(t)\zeta_0:=\zeta(t)
\end{equation*}
where $\zeta(t)=(\phi(t),\theta(t))$ is the unique global weak solution to Problem P$_{\alpha,\ep}$.
Furthermore, as a consequence of \eqref{diff-0}, if we assume 
\[
M_1=M_2 \quad\text{and}\quad N_1=N_2,
\]
the semigroup $S_{\alpha,\ep}(t):\mathcal{X}^{\alpha,\ep}_m\rightarrow \mathcal{X}^{\alpha,\ep}_m$ is Lipschitz continuous on $\mathcal{X}^{\alpha,\ep}_m$, uniformly in $t$ on compact intervals. 
\end{corollary}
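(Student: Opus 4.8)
The plan is to check, in order, that $S_{\alpha,\ep}(t)$ is a well-defined self-map of $\mathcal{X}^{\alpha,\ep}_m$ obeying the semigroup identities, that each orbit $t\mapsto S_{\alpha,\ep}(t)\zeta_0$ is $d_{\mathcal{X}^{\alpha,\ep}_m}$-continuous, and that the flow is Lipschitz on compact time intervals. For the first point: given $\zeta_0=(\phi_0,\theta_0)^{tr}\in\mathcal{X}^{\alpha,\ep}_m$, Theorem \ref{t:existence} supplies a global weak solution $\zeta(t)=(\phi(t),\theta(t))$; the conservation laws \eqref{con-mass} and \eqref{con-heat} give $|\langle\phi(t)\rangle|=|\langle\phi_0\rangle|\le m$ and $|\langle\theta(t)\rangle|=|\langle\theta_0\rangle|\le m$, while \eqref{wk-0.2} together with $\phi,\theta\in C([0,T];H)$ yields $\zeta(t)\in\mathcal{X}^{\alpha,\ep}_m$ for every $t\ge0$. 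Uniqueness follows by invoking Proposition \ref{t:cont-dep} with $\zeta_{01}=\zeta_{02}=\zeta_0$: then $M_1=M_2$ and $N_1=N_2$, so the right-hand side of \eqref{diff-0} vanishes and any two weak solutions coincide. Hence $S_{\alpha,\ep}(t)\zeta_0:=\zeta(t)$ makes sense; $S_{\alpha,\ep}(0)=\mathrm{Id}$ is \eqref{ws-9}, and $S_{\alpha,\ep}(t+s)=S_{\alpha,\ep}(t)S_{\alpha,\ep}(s)$ follows from uniqueness plus the time-translation invariance of Problem P$_{\alpha,\ep}$ (its coefficients $a$, $J$, $F$ do not depend on $t$), since $\tau\mapsto\zeta(\tau+s)$ is the weak solution issued from $\zeta(s)$.

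For strong continuity, fix $\zeta_0$. Because $\phi,\theta\in C([0,T];H)$ by \eqref{ws-1} and \eqref{ws-4} and $\|\cdot\|_{V'}\le C\|\cdot\|$, the map $t\mapsto(\phi(t),\theta(t))$ is continuous into $\mathbb{H}^{\alpha,\ep}_m$. For the remaining piece of the metric I would use the energy equality \eqref{wk-2}: $t\mapsto\mathcal{E}_\ep(t)$ is absolutely continuous, being $\mathcal{E}_\ep(0)$ minus the time-integral of a nonnegative $L^1_{\mathrm{loc}}$ function; meanwhile, in \eqref{wk-1}, the double-convolution term is continuous in $t$ since $\phi\in C([0,T];H)$ and $J\in L^1$, and $\tfrac{\ep}{2}\|\theta(t)\|^2$ is continuous since $\theta\in C([0,T];H)$; subtracting, $t\mapsto\int_\Omega F(\phi(t))\,dx$ is continuous. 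Thus $t\mapsto S_{\alpha,\ep}(t)\zeta_0$ is continuous in $d_{\mathcal{X}^{\alpha,\ep}_m}$, i.e. the semigroup is strongly continuous (joint continuity on bounded sets then comes out of the Lipschitz bound below combined with this).

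Now impose $M_1=M_2$ and $N_1=N_2$, so that the last summand on the right of \eqref{diff-0} drops and $\|\zeta_1(t)-\zeta_2(t)\|_{\mathbb{H}^{\alpha,\ep}_m}\le e^{\bar\nu_1 t/2}\|\zeta_{01}-\zeta_{02}\|_{\mathbb{H}^{\alpha,\ep}_m}\le e^{\bar\nu_1 T/2}\,d_{\mathcal{X}^{\alpha,\ep}_m}(\zeta_{01},\zeta_{02})$ for all $t\in[0,T]$; this bounds the $\mathbb{H}^{\alpha,\ep}_m$-component of $d_{\mathcal{X}^{\alpha,\ep}_m}(S_{\alpha,\ep}(t)\zeta_{01},S_{\alpha,\ep}(t)\zeta_{02})$. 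For the $F$-component I would subtract the two instances of \eqref{wk-2}: $\int_\Omega F(\phi_1(t))\,dx-\int_\Omega F(\phi_2(t))\,dx$ equals the difference of the initial energies, minus the difference of the double-convolution and $\tfrac{\ep}{2}\|\theta_i(t)\|^2$ terms, minus the difference of the dissipation integrals $\int_0^t(\|\nabla\mu_i\|^2+\alpha\|\partial_t\phi_i\|^2+\|\nabla\theta_i\|^2)\,ds$. The first is bounded by $C(\|\bar\phi(0)\|^2+\|\bar\theta(0)\|^2)+|\int_\Omega F(\phi_{01})\,dx-\int_\Omega F(\phi_{02})\,dx|$, hence by a function of $d_{\mathcal{X}^{\alpha,\ep}_m}(\zeta_{01},\zeta_{02})$; the second factors as $\|\bar\phi(t)\|$ (resp. $\|\bar\theta(t)\|$) times an a priori bound on $\|\phi_i(t)\|+\|\theta_i(t)\|$ supplied by \eqref{wk-0.01} and \eqref{wk-16}; and the third, after writing $\|\nabla\mu_1\|^2-\|\nabla\mu_2\|^2=(\nabla\bar\mu,\nabla(\mu_1+\mu_2))$ and likewise for the other two terms, is bounded by $\big(\int_0^t(\|\nabla\bar\mu\|^2+\alpha\|\partial_t\bar\phi\|^2+\|\nabla\bar\theta\|^2)\,ds\big)^{1/2}$ times an a priori $L^2(0,T)$-bound for $\mu_1+\mu_2$, $\partial_t\phi_1+\partial_t\phi_2$, $\theta_1+\theta_2$ coming from \eqref{wk-17}, \eqref{wk-19}, \eqref{wk-18}; crucially, $\int_0^t\|\partial_t\bar\phi\|^2_{V'}\,ds=\int_0^t\|\nabla\bar\mu\|^2\,ds$ because $\langle\partial_t\bar\phi\rangle=0$, so this quantity is precisely the one controlled by \eqref{diff-0}. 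Putting these together yields $d_{\mathcal{X}^{\alpha,\ep}_m}(S_{\alpha,\ep}(t)\zeta_{01},S_{\alpha,\ep}(t)\zeta_{02})\le L\,d_{\mathcal{X}^{\alpha,\ep}_m}(\zeta_{01},\zeta_{02})$ for $t\in[0,T]$, with $L$ depending on $T$, $\alpha$, $\ep$ and a bound on the data, uniformly on bounded subsets of $\mathcal{X}^{\alpha,\ep}_m$.

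The main obstacle is the square-root term $|\int_\Omega F(\phi_1)\,dx-\int_\Omega F(\phi_2)\,dx|^{1/2}$ in the metric: since $F$ has arbitrary polynomial growth and the weak-solution regularity only delivers Fatou-type lower semicontinuity of $t\mapsto\int_\Omega F(\phi(t))\,dx$, both its continuity in time (for strong continuity) and its Lipschitz dependence on the data must be squeezed out of the energy equality \eqref{wk-2}, with the dissipation-integral bookkeeping --- recycling \eqref{diff-0} via $\|\partial_t\bar\phi\|_{V'}=\|\nabla\bar\mu\|$ --- being the delicate step; everything else is routine continuous-dependence and semigroup boilerplate.
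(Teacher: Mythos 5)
Your construction of $S_{\alpha,\ep}$ (existence from Theorem \ref{t:existence}, uniqueness and the semigroup law from Proposition \ref{t:cont-dep} with $\zeta_{01}=\zeta_{02}$, invariance of $\mathcal{X}^{\alpha,\ep}_m$ from \eqref{con-mass}--\eqref{con-heat} and \eqref{wk-0.2}, and the Lipschitz bound $\|\zeta_1(t)-\zeta_2(t)\|_{\mathbb{H}^{\alpha,\ep}_m}\le e^{\bar\nu_1 t/2}\|\zeta_{01}-\zeta_{02}\|_{\mathbb{H}^{\alpha,\ep}_m}$ read off from \eqref{diff-0} when $M_1=M_2$, $N_1=N_2$) is exactly the paper's (unwritten) argument; the paper treats the corollary as an immediate consequence of the existence theorem and of \eqref{diff-0}, with Lipschitz continuity understood in the $\mathbb{H}^{\alpha,\ep}_m$-norm. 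Your additional work on the $F$-component of the metric goes beyond what the paper does, and the time-continuity part of it is sound: extracting continuity of $t\mapsto\int_\Omega F(\phi(t))\,dx$ from the energy equality \eqref{wk-2} by subtracting the continuous convolution and $\theta$ terms is a clean way to get strong continuity in the full metric $d_{\mathcal{X}^{\alpha,\ep}_m}$, and the identity $\|\partial_t\bar\phi\|_{V'}=\|\nabla\bar\mu\|$ you use to recycle \eqref{diff-0} is correct.

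One caveat on your final step. Your bookkeeping yields, on bounded sets, an estimate of the form $\bigl|\int_\Omega F(\phi_1(t))\,dx-\int_\Omega F(\phi_2(t))\,dx\bigr|\le L'\,d_{\mathcal{X}^{\alpha,\ep}_m}(\zeta_{01},\zeta_{02})$ (the dissipation-difference term is genuinely \emph{linear} in $\|\zeta_{01}-\zeta_{02}\|_{\mathbb{H}^{\alpha,\ep}_m}$, via Cauchy--Schwarz against an a priori bound). After taking the square root demanded by the metric this gives only $\sqrt{L'}\,d_{\mathcal{X}^{\alpha,\ep}_m}(\zeta_{01},\zeta_{02})^{1/2}$, i.e.\ H\"older-$\tfrac12$ continuity of the $F$-component, not the Lipschitz bound $d_{\mathcal{X}^{\alpha,\ep}_m}(S_{\alpha,\ep}(t)\zeta_{01},S_{\alpha,\ep}(t)\zeta_{02})\le L\,d_{\mathcal{X}^{\alpha,\ep}_m}(\zeta_{01},\zeta_{02})$ you assert; for that you would need the $F$-difference at time $t$ to be controlled by the \emph{square} of the initial metric distance. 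So your proof establishes the corollary as the paper intends it (Lipschitz in the $\mathbb{H}^{\alpha,\ep}_m$-norm, uniformly on compact time intervals, plus strong continuity in $d_{\mathcal{X}^{\alpha,\ep}_m}$), but the stronger literal reading --- Lipschitz with respect to the metric $d_{\mathcal{X}^{\alpha,\ep}_m}$ itself --- is not delivered by the bounds you derive, and you should either weaken that last claim to H\"older continuity of the $F$-component or drop it.
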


\subsection{Bounded absorbing sets for Problem P$_{\alpha,\ep}$}

We now give a dissipation estimate for Problem P$_{\alpha,\ep}$ from which we deduce the existence of an absorbing set.
The idea of the estimate follows \cite[Proposition 2]{Gal&Miranville09}.
It is here where we require the slight modification of hypothesis (H1).

\begin{lemma}  \label{t:diss-1}
Assume (H1)-(H4) hold.
Let $m\ge0$, $\delta_0>0$, $\delta\in(0,\delta_0]$, $(\alpha,\ep)\in(0,1]\times(0,1]$, $\zeta_0=(\phi_0,\theta_0)^{tr}\in \mathbb{H}^{\alpha,\ep}_m$ with $F(\phi_0)\in L^1(\Omega).$
Assume $\zeta=(\phi,\theta)^{tr}$ is a weak solution to Problem P$_{\alpha,\ep}$.
There is a positive constant $\nu_3=\nu_3(\delta_0,J,\Omega)$, but independent of $\alpha$, $\ep$, and $\zeta_0$, such that, for all $t\ge0$, the following holds, 
\begin{align}
& \|\hat\phi(t)\|^2_{V'} + \alpha\|\hat\phi(t)\|^2 + \|\sqrt{a}\phi(t)\|^2 + \|\hat\theta(t)\|^2 + (F(\phi(t)),1) - (J*\phi(t),\hat\phi(t))  \notag \\ 
& + \int_t^{t+1} \left( \|\phi_t(s)\|^2_{V'} + \alpha\|\phi_t(s)\|^2 + \|\theta(s)\|^2_{V} \right) ds  \notag \\
& \le Q(\|\zeta_0\|_{\mathbb{H}^{\alpha,\ep}_m})e^{-\nu_3t} + \frac{1}{\nu_3}Q(m),  \label{ap-1}
\end{align}
for some monotonically increasing functions $Q$.

Consequently, the set given by
\begin{equation}  \label{abs-set}
\mathcal{B}^{\alpha,\ep}_0 := \left\{ \zeta\in\mathbb{H}^{\alpha,\ep}_m : \|\zeta\|^2_{\mathbb{H}^{\alpha,\ep}_m} \le \frac{1}{\nu_3}Q(m)+1 \right\},
\end{equation}
where $Q(\cdot,\cdot)$ is the function from \eqref{ap-1}, is a closed, bounded absorbing set in $\mathbb{H}^{\alpha,\ep}_m$, positively invariant under the semigroup $S_{\alpha,\ep}$.
\end{lemma}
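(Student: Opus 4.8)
The plan is to derive the dissipative differential inequality \eqref{ap-1} by testing the three equations of Problem P$_{\alpha,\ep}$ with suitably chosen multipliers, then closing the estimate with a Gronwall-type argument. Following \cite[Proposition 2]{Gal&Miranville09}, I would first split $\phi=\hat\phi+\langle\phi_0\rangle$ using the conservation law \eqref{con-mass} and work with the zero-average part $\hat\phi$, so that $A_N^{-1}$ makes sense on it. The natural multipliers are: in \eqref{rel-1} (rewritten variationally) take $\varphi=\mathcal{N}\hat\phi_t+\mu$-type combinations; more precisely, I expect to use $\varphi=\mathcal{N}\hat\phi+\mu$ (or a weighted sum) to simultaneously produce the dissipation term $\|\nabla\mu\|^2$, the $\|\hat\phi\|^2_{V'}$ time-derivative, and a coercive contribution from $(F'(\phi),\hat\phi)$. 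In \eqref{rel-3} take $\vartheta=\hat\theta$, producing $\tfrac{\ep}{2}\tfrac{d}{dt}\|\hat\theta\|^2+\|\nabla\theta\|^2$ on the left and $-\delta\langle\phi_t,\hat\theta\rangle$ on the right, which cancels against the corresponding coupling term from the $\phi$-equation when the multipliers are chosen to match. A separate test with $\varphi=\phi_t$ (or $\hat\phi_t$) in the $\phi$-equation, as in the proof of Theorem \ref{t:existence} (cf. \eqref{wk-20.1}--\eqref{wk-23}), is what brings in the $\alpha\|\phi_t\|^2$ and $\alpha\|\hat\phi\|^2$ terms; these two computations must be combined with appropriate weights so that every cross term is absorbed.

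The key algebraic inputs are the consequences of the hypotheses recorded in the Preliminaries: the quadratic-perturbation structure \eqref{convex} and (H2) give $(F'(\phi_1)-F'(\phi_2),\cdot)$-type coercivity, here used in the form $(a\phi+F'(\phi),\hat\phi)\ge c_0\|\hat\phi\|^2 + (\text{terms})$; the lower bound \eqref{Fcons-1} controls $\int_\Omega F(\phi)\,dx$ from above by $\|\hat\phi\|^2$ plus $(F'(\phi),\hat\phi)$; and the convolution estimates via Young's inequality (using $c_J$ and $d_J$ from \eqref{not-1}) control all $J*\phi$ and $\nabla J*\phi$ terms by $\|\phi\|^2$ or $\|\hat\phi\|^2_{V'}$, exactly as in \eqref{diff-12}--\eqref{diff-13}. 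The $\langle\mu\rangle$-type contributions that arise from testing against non-mean-zero functions are handled as in \eqref{mean-mu} and \eqref{diff-19.1}, bounded in terms of $m=|\langle\phi_0\rangle|,|\langle\theta_0\rangle|$, which is where the $Q(m)$ term on the right of \eqref{ap-1} comes from. After assembling these bounds, I expect an inequality of the schematic form $\tfrac{d}{dt}\Lambda + \nu_3\Lambda + (\text{dissipation integrand}) \le Q(m)$, where $\Lambda$ is equivalent (up to the lower-order $F$ terms, controlled by (H3)) to $\|\hat\phi\|^2_{V'}+\alpha\|\hat\phi\|^2+\|\sqrt a\phi\|^2+\|\hat\theta\|^2+(F(\phi),1)-(J*\phi,\hat\phi)$, with $\nu_3$ independent of $\alpha,\ep$.

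Applying the differential (uniform) Gronwall inequality to this relation gives the pointwise decay estimate $\Lambda(t)\le \Lambda(0)e^{-\nu_3 t}+\tfrac{1}{\nu_3}Q(m)$, and integrating the inequality over $[t,t+1]$ after discarding the good sign of $\nu_3\Lambda$ yields the integral term $\int_t^{t+1}(\|\phi_t\|^2_{V'}+\alpha\|\phi_t\|^2+\|\theta\|^2_V)\,ds$ bounded by the same right-hand side; together these give \eqref{ap-1} with $Q(\|\zeta_0\|_{\mathbb{H}^{\alpha,\ep}_m})$ absorbing $\Lambda(0)$ (using \eqref{Fcons-1} and \eqref{wk-13.9} to bound $(F(\phi_0),1)$ by a function of the phase-space norm and $m$). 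For the consequence, since $\|\zeta\|^2_{\mathbb{H}^{\alpha,\ep}_m}=\|\phi\|^2_{V'}+\alpha\|\phi\|^2+\ep\|\theta\|^2\le C(\|\hat\phi\|^2_{V'}+\alpha\|\hat\phi\|^2+\|\hat\theta\|^2)+C m$ and $\ep\le1$, estimate \eqref{ap-1} shows $\limsup_{t\to\infty}\|S_{\alpha,\ep}(t)\zeta_0\|^2_{\mathbb{H}^{\alpha,\ep}_m}\le \tfrac{1}{\nu_3}Q(m)$, so the ball $\mathcal{B}^{\alpha,\ep}_0$ in \eqref{abs-set} is absorbing; closedness is immediate and positive invariance follows by enlarging the radius slightly (the ``$+1$''), using that the bound in \eqref{ap-1} is monotone along trajectories starting inside $\mathcal{B}^{\alpha,\ep}_0$. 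The main obstacle I anticipate is choosing the precise linear combination of multipliers and weights so that the coupling terms $\pm\delta(\nabla\theta,\nabla\phi)$ and $\pm\delta\langle\phi_t,\theta\rangle$, the viscous cross terms $\alpha(\nabla\phi_t,\nabla\mu)$, and the $\langle\mu\rangle$-contributions all cancel or absorb cleanly while keeping $\nu_3$ independent of $\alpha$ and $\ep$ — in particular ensuring the coefficient of $\alpha\|\phi_t\|^2$ stays positive after all Cauchy–Schwarz splittings, which forces a careful bookkeeping of the $\tfrac{1}{\alpha}$ factors exactly as in \eqref{diff-11}--\eqref{diff-14}.
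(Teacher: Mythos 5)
Your proposal is correct and follows essentially the same route as the paper: the paper tests \eqref{rel-1}--\eqref{rel-3} with $A_N^{-1}\phi_t$, $\phi_t$, $\hat\theta$ (i.e.\ the energy multiplier $\mu$ together with $\hat\theta$, giving \eqref{ap-2}) and adds a small multiple $\xi$ of the tests $A_N^{-1}\hat\phi$, $\hat\phi$ (your $\xi\mathcal{N}\hat\phi$, giving \eqref{ap-3}), then uses (H1)--(H3) via \eqref{Fcons-1}--\eqref{Fcons-3}, the convolution bounds, and the mean-value control to reach $\frac{d}{dt}E+\nu_3E+(\text{dissipation})\le Q(m)$, exactly the schematic inequality you predict, finishing with Gr\"onwall and integration over $(t,t+1)$. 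The only cosmetic differences are that the $\alpha\|\phi_t\|^2$ term already comes out of the $\mu$-test (no separate $\varphi=\phi_t$ test is needed), and the couplings appear as $\delta(\theta,\phi_t)$ and $\delta(\theta,\hat\phi)$ rather than gradient pairings, but neither affects the argument.
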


\begin{proof}
We give a formal calculation that can be justified by a suitable Faedo-Galerkin approximation based on the proof of Theorem \ref{t:existence} above.
Let $M_0:=\langle\phi_0\rangle$ and $N_0:=\langle\theta_0\rangle$.
Multiply \eqref{rel-1}-\eqref{rel-3} by, $A_N^{-1}\phi_t$, $\phi_t$, and $\hat\theta:=\theta-N_0$, respectively, then integrate over $\Omega$, applying \eqref{NLr-1} (since $\phi_t=\phi_t-\langle\phi_t\rangle$ belongs to $V_0'$; recall \eqref{con-mass}), and sum the resulting identities to arrive at the differential identity, which holds for almost all $t\ge0$,
\begin{align}
& \frac{d}{dt} \left\{ \|\sqrt{a}\phi\|^2 + \ep\|\hat\theta\|^2 + 2(F(\phi),1) - (J*\phi,\phi) \right\} + 2\|\phi_t\|^2_{V'} + 2\alpha\|\phi_t\|^2 + 2\|\nabla\theta\|^2 = 0.  \label{ap-2}
\end{align}
Let $\hat\phi:=\phi-M_0$.
We further multiply \eqref{rel-1}-\eqref{rel-2} by, $2\xi A_N^{-1}\hat\phi$ and $2\xi\hat\phi$, respectively, in $H$, where $\xi>0$ is to be determined below.
Observe $\langle\hat\phi\rangle=0$ and $\|\hat\phi\|^2=\|\phi\|^2-M_0^2|\Omega|.$
This yields, for almost all $t\ge0,$
\begin{align}
& \frac{d}{dt} \left\{ \xi\|\hat\phi\|^2_{V'} + \xi\alpha\|\hat\phi\|^2 \right\} + 2\xi\|\sqrt{a}\hat\phi\|^2 + 2\xi(F'(\phi),\hat\phi)  \notag \\ 
& = 2\xi(J*\phi,\hat\phi) + 2\xi\delta(\theta,\hat\phi) - 2\xi M_0(a,\hat\phi).  \label{ap-3}
\end{align}
Together, \eqref{ap-2} and \eqref{ap-3} make the differential identity,
\begin{align}
& \frac{d}{dt} \left\{ \xi\|\hat\phi\|^2_{V'} + \xi\alpha\|\hat\phi\|^2 + \|\sqrt{a}\phi\|^2 + \ep\|\hat\theta\|^2 + 2(F(\phi),1) - (J*\phi,\hat\phi) \right\}  \notag \\ 
& + 2\|\phi_t\|^2_{V'} + 2\alpha\|\phi_t\|^2 + 2\xi\|\sqrt{a}\hat\phi\|^2 + 2\|\nabla\theta\|^2 + 2\xi(F'(\phi),\hat\phi)  \notag \\
& = 2\xi(J*\phi,\hat\phi) + 2\xi\delta(\theta,\hat\phi) - 2\xi M_0(a,\hat\phi).  \label{ap-4}
\end{align}
Introduce the functional defined by, for all $t\ge0$ and $\xi>0$,
\begin{equation}  \label{ap-5}
E(t):=\xi\|\hat\phi(t)\|^2_{V'} + \xi\alpha\|\hat\phi(t)\|^2 + \|\sqrt{a}\phi(t)\|^2 + \ep\|\hat\theta(t)\|^2 + 2(F(\phi(t)),1) - (J*\phi,\hat\phi) + C_F.
\end{equation}
(Observe, $E(t) = 2\mathcal{E}_\ep(t) + \xi\|\hat\phi(t)\|^2_{V'} + \xi\alpha\|\hat\phi(t)\|^2 + C_F$.)
Because of assumption (H3) and the assumption that $F(\phi_0)\in L^1(\Omega)$, we know 
\begin{equation}  \label{Fcons-5}
2(F(\phi),1) - (J*\phi,\hat\phi) \ge (2c_1-2c_1)\|\hat\phi\|^2+2c_1 M_0^2|\Omega|-2c_2|\Omega|,
\end{equation}
thus the constant $C_F$ may be chosen sufficiently large to insure $E(t)$ is non-negative for all $t\ge0$, $\alpha\in(0,1]$, $\ep\in(0,1]$, and $\xi>0.$
Then we rewrite \eqref{ap-4} as, 
\begin{align}
& \frac{d}{dt} E + \tau E = H,  \label{ap-6}
\end{align}
for some $0<\tau<\xi,$ and where 
\begin{align}
H(t) & := \tau\xi\|\hat\phi(t)\|^2_{V'} + \tau\xi\alpha\|\hat\phi(t)\|^2 + \tau\|\sqrt{a}\phi(t)\|^2 + \tau\ep\|\hat\theta(t)\|^2 + 2\tau(F(\phi(t)),1) - \tau(J*\phi,\hat\phi) + \tau C_F  \notag \\ 
& - 2\|\phi_t(t)\|^2_{V'} - 2\alpha\|\phi_t(t)\|^2 - 2\xi\|\sqrt{a}\hat\phi(t)\|^2 - 2\|\nabla\theta(t)\|^2 - 2\xi(F'(\phi(t)),\hat\phi(t))  \notag \\
& + 2\xi(J*\phi(t),\hat\phi(t)) + 2\xi\delta(\theta(t),\hat\phi(t)) - 2\xi M_0(a,\hat\phi(t)).  \label{ap-7}
\end{align}
Estimating the products on the right-hand side using the assumptions (H1)-(H3) as well as Young's inequality for convolutions (cf. e.g. \cite[Corollary 2.25]{Adams&Fournier03}), and the Poincar\'{e}-type inequality \eqref{Poincare2} yields (and recall $\delta\in(0,\delta_0]$), 
\begin{align}
2\xi(J*\phi,\hat\phi) & \le 2\xi \|J*\phi\| \|\hat\phi \|  \notag \\ 
& \le 2\xi c_J \|\hat\phi\|^2 + M_0^2\|a\|_\infty^2 + \xi^2\|\hat\phi\|^2,  \label{ap-8}
\end{align}
\begin{align}
2\xi\delta(\theta,\hat\phi) & \le 2\xi\delta_0 \|\theta\|\|\hat\phi\|  \notag \\ 
& \le \xi\delta_0^2\|\theta\|^2 + \xi\|\hat\phi\|^2  \notag \\
& \le 2\xi\delta_0^2\lambda_\Omega\|\nabla\theta\|^2 + 2\xi\delta_0^2|\Omega|N_0^2 + \xi\|\hat\phi\|^2,  \label{ap-10}
\end{align}
and
\begin{align}
-2\xi M_0(a,\hat\phi) & \le 2\xi M_0 \|a\| \|\hat\phi\|  \notag \\ 
& = 2\xi M_0\|J*1\|\|\hat\phi\|  \notag \\
& \le 2\xi M_0 c_J |\Omega|^{1/2}\|\hat\phi\|  \notag \\
& \le M_0^2 c_J^2 |\Omega| + \xi^2 \|\hat\phi\|^2.  \label{ap-11}
\end{align}
With assumption (H3) we now consider, with the aid of \eqref{Fcons-1}-\eqref{Fcons-3} (setting $m=M_0$),
\begin{align}
2\tau(F(\phi),1) - 2\xi(F'(\phi),\hat\phi) & = -2\tau\left( (F'(\phi),\hat\phi)-(F(\phi),1) \right) - 2(\xi-\tau)(F'(\phi),\hat\phi)  \notag \\
& = -2\tau(F'(\phi)\hat\phi-F(\phi),1)-2(\xi-\tau)(F'(\phi),\hat\phi)  \notag \\ 
& \le 2\tau c_9|\Omega| + 2\tau c_{10}\|\hat\phi\|^2 - (\xi-\tau)(|F(\phi)|,1) + 2(\xi-\tau)c_{11} + (\xi-\tau)c_{12}.  \label{ap-12} 
\end{align}
By (H1) again, we find that for a fixed $0<a_0<\essinf_\Omega a(x)$ (this is where we need the slightly stricter version of (H1)), there holds 
\[
a_0\|\hat\phi\|^2 \le \|\sqrt{a}\hat\phi\|^2.
\]
Moreover, due to the continuous embedding $H\hookrightarrow V',$ there is a constant, which we denote $C_\Omega>0$, so that $C_\Omega^{-2}\|\hat\phi\|^2_{V'}\le\|\hat\phi\|^2$ (cf. e.g. \cite[p. 243, Equation (6.7)]{Milani&Koksch05}), and, now with $0<\xi<1,$ 
\begin{align}
-2\xi\|\sqrt{a}\hat\phi\|^2 & \le -\frac{a_0}{2} C^{-2}_\Omega\|\hat\phi\|^2_{V'} - \frac{a_0}{2}\|\hat\phi\|^2 - \xi\|\sqrt{a}\hat\phi\|^2.  \label{ap-13}
\end{align}
Also observe that, using the Poincar\'{e}-type inequality \eqref{Poincare} again, we have
\begin{align}
-\left( 2 - 2\xi\delta_0^2\lambda_\Omega \right)\|\nabla\theta\|^2 \le -\left( 1 - 2\xi\delta_0^2\lambda_\Omega \right)\|\nabla\theta\|^2 - \frac{1}{\lambda_\Omega}\|\hat\theta\|^2.  \label{ap-13.5}
\end{align} 
Combining \eqref{ap-7}-\eqref{ap-13.5} yields, 
\begin{align}
H & \le \left( \tau\xi - \frac{a_0}{2} C^{-2}_\Omega \right)\|\hat\phi\|^2_{V'} + \left( \tau\xi\alpha + 2\xi c_J + \xi + 2\xi^2 + 2\tau c_{10} - \frac{a_0}{2} \right)\|\hat\phi\|^2  \notag \\ 
& + \left( \tau - \frac{\xi}{2} \right) \|\sqrt{a}\phi\|^2 + \left( \tau\ep - \frac{1}{\lambda_\Omega} \right)\|\hat\theta\|^2 - (\xi-\tau)(|F(\phi)|,1)  \notag \\ 
& - 2\|\phi_t\|^2_{V'} - 2\alpha\|\phi_t\|^2 - \left(1-2\xi\delta_0^2\lambda_\Omega \right) \|\nabla\theta\|^2  \notag \\
& + \tau C_F + M_0^2 c_J^2|\Omega| + 2\xi\delta_0^2|\Omega|N_0^2 + 2\tau c_9|\Omega|  \notag \\
& + 2(\xi-\tau)c_{11} + (\xi-\tau)c_{12} + \xi M_0^2|\Omega|(\langle a \rangle - a_0) + M_0^2\|a\|_\infty^2.  \label{ap-14}
\end{align}
We should note that the additional constants in $a$ on the right-hand side of \eqref{ap-14} is due to the fact that
\[
-\xi\|\sqrt{a}\hat\phi\|^2 \ge -\xi\|\sqrt{a}\phi\|^2 - \xi M_0^2|\Omega|(\langle a \rangle - a_0).
\]
Inserting \eqref{ap-14} into \eqref{ap-6} produces the differential inequality (this is where we use the condition that $0<\alpha\le1$ and $0<\ep\le1$), 
\begin{align}
& \frac{d}{dt} E + 2\|\phi_t\|^2_{V'} + 2\alpha\|\phi_t\|^2 + \left( 1-2\xi\delta_0^2\lambda_\Omega \right)\|\theta\|^2_{V}  \notag \\ 
& + \frac{a_0}{4}C^{-2}_\Omega\|\hat\phi\|^2_{V'} + \left( \frac{a_0}{2} - 2\xi c_J - \xi - 2\xi^2 - 2\tau c_{10} \right)\alpha\|\hat\phi\|^2  \notag \\ 
& + \xi\|\sqrt{a}\phi\|^2 + \frac{1}{\lambda_\Omega}\ep\|\hat\theta\|^2 + (\xi - \tau)(F(\phi),1) + \tau C_F  \notag \\ 
& \le \tau C_F + M_0^2 c_J^2|\Omega| + 2\xi\delta_0^2|\Omega|N_0^2 + \left( 1-2\xi\delta_0^2\lambda_\Omega \right)N^2_0  \notag \\ 
& + 2\tau c_9|\Omega| + 2(\xi-\tau)c_{11} + (\xi-\tau)c_{12} + \xi M_0^2|\Omega|(\langle a \rangle - a_0) + M_0^2\|a\|_\infty^2.  \notag 
\end{align}
The extra term with $N_0$ now appearing on the right-hand side is used to make the $V$ norm in $\theta$.
Now we easily see that there are $0<\tau<\xi<1$ so that 
\[
\nu_3=\nu_3(\delta_0,J,\Omega):=\min\left\{ 1-2\xi\delta_0^2\lambda_\Omega, \frac{a_0}{2} - 2\xi c_J - \xi - 2\xi^2 - 2\tau c_{10} \right\}>0.
\]
Now there holds, for almost all $t\ge0,$
\begin{align}
& \frac{d}{dt} E + \nu_3 E + \|\phi_t\|^2_{V'} + 2\alpha\|\phi_t\|^2 + \nu_3\|\theta\|^2_{V} \le Q(m).  \label{ap-15}
\end{align}
Neglecting the normed terms $\|\phi_t\|^2_{V'} + 2\alpha\|\phi_t\|^2 + \nu_3\|\theta\|^2_{V}$, then employing a Gr\"{o}nwall inequality yields, for all $t\ge0,$
\begin{equation}
E(t) \le e^{-\nu_3 t}E(0) + \frac{1}{\nu_3}Q(m).  \label{ap-16}
\end{equation}
Recall that $F(\phi_0)\in L^1(\Omega)$ by assumption, so now we easily arrive at 
\begin{align}
& \|\hat\phi(t)\|^2_{V'} + \alpha\|\hat\phi(t)\|^2 + \|\sqrt{a}\phi(t)\|^2 + \|\hat\theta(t)\|^2 + (F(\phi(t)),1) - (J*\phi(t),\hat\phi(t))  \notag \\ 
& \le E(0)e^{-\nu_3t} + \frac{1}{\nu_3}Q(m).  \label{ap-17}
\end{align}
Also, by neglecting the positive term $\nu_3E$ in \eqref{ap-15} and integrating this time over $(t,t+1)$, we find, with \eqref{ap-16}, for all $t\ge0$,
\begin{equation}
\int_t^{t+1}\left( \|\phi_t(s)\|^2_{V'} + \alpha\|\phi_t(s)\|^2 + \|\theta(s)\|^2_{V} \right)ds \le E(0)e^{-\nu_3t} + \left( \frac{1}{\nu_3} + 1 \right)Q(m).  \label{ap-18}
\end{equation}
Together, \eqref{ap-17} and \eqref{ap-18} establish \eqref{ap-1}.

The existence of the set $\mathcal{B}^{\alpha,\ep}_0$ described in \eqref{abs-set} follows directly from the dissipation estimate \eqref{ap-1}; indeed, (cf. e.g. \cite{Babin&Vishik92}).
To see why $\mathcal{B}^{\alpha,\ep}_0$ is absorbing, consider any nonempty bounded subset $B$ in $\mathbb{H}^{\alpha,\ep}_m\setminus\mathcal{B}^{\alpha,\ep}_0$.
Then we have that $\mathcal{S}_{\alpha,\ep}(t)B\subseteq\mathcal{B}^{\alpha,\ep}_0$, in $\mathbb{H}^{\alpha,\ep}_m$, for all $t\ge t_0$, where
\begin{equation}  \label{time-t0}  
t_0 := \max\left\{ \frac{1}{\nu_3}\ln(E(0)),0 \right\}.
\end{equation}
This completes the proof.
\end{proof}

\begin{remark}  \label{r:uniform-radius}
According to the proof, $\nu_3$ is a function of $\delta_0$ and the relation is $\nu_3\sim 1-c\delta^2_0>0$ for a sufficiently small constant $c>0.$
\end{remark}

\begin{remark}
The following global uniform bound follows immediately from estimate \eqref{ap-1} and \eqref{ap-5}.
Under the assumptions of Lemma \ref{t:diss-1}, there holds
\begin{equation}  \label{unif-bnd}
\limsup_{t\rightarrow+\infty} \|\zeta(t)\|_{\mathbb{H}^{\alpha,\ep}_m} \le E(0) + \frac{1}{\nu_3}Q(m) =: Q(\|\zeta_0\|_{\mathbb{H}^{\alpha,\ep}_m},m)
\end{equation}
for a monotonically increasing function $Q$, independent of $\alpha$ and $\ep$
\end{remark}

\subsection{Global attractors for Problem P$_{\alpha,\ep}$}

The main result in this section is 

\begin{theorem}  \label{t:global}
For each $\alpha\in(0,1]$ and $\ep\in(0,1]$ the semigroup $S_{\alpha,\ep}=(S_{\alpha,\ep}(t))_{t\ge0}$ admits a global attractor $\mathcal{A}^{\alpha,\ep}$ in $\mathbb{H}^{\alpha,\ep}_m$. 
The global attractor is invariant under the semiflow $S_{\alpha,\ep}$ (both positively and negatively) and attracts all nonempty bounded subsets of $\mathbb{H}^{\alpha,\ep}_m$; precisely, 

\begin{description}

\item[1] for each $t\geq 0$, $S_{\alpha,\ep}(t)\mathcal{A}^{\alpha,\ep} = \mathcal{A}^{\alpha,\ep}$, and 

\item[2] for every nonempty bounded subset $B$ of $\mathbb{H}^{\alpha,\ep}_m$,
\[
\lim_{t\rightarrow\infty}{\rm{dist}}_{\mathbb{H}^{\alpha,\ep}_m}(S_{\alpha,\ep}(t)B,\mathcal{A}^{\alpha,\ep}) := \lim_{t\rightarrow\infty}\sup_{\zeta\in B}\inf_{\xi\in\mathcal{A}^{\alpha,\ep}}\|S_{\alpha,\ep}(t)\zeta-\xi\|_{\mathbb{H}^{\alpha,\ep}_m} = 0.
\]

\end{description}

\noindent \noindent Additionally,

\begin{description}

\item[3] the global attractor is unique maximal compact invariant subset in $\mathbb{H}^{\alpha,\ep}_m$ given by
\[
\mathcal{A}^{\alpha,\ep} := \omega (\mathcal{B}^{\alpha,\ep}_0) := \bigcap_{s\geq 0}{\overline{\bigcup_{t\geq s}S_{\alpha,\ep}(t)\mathcal{B}^{\alpha,\ep}_0}}^{\mathbb{H}^{\alpha,\ep}_m}.
\]

\end{description}

\noindent Furthermore, 

\begin{description}

\item[4] The global attractor $\mathcal{A}^{\alpha,\ep}$ is connected and given by the union of the unstable manifolds connecting the equilibria of $S_{\alpha,\ep}(t)$.

\item[5] For each $\zeta_0=(\phi_0,\theta_0)^{tr}\in\mathbb{H}^{\alpha,\ep}_m$, the set $\omega(\zeta_0)$ is a connected compact invariant set, consisting of the fixed points of $S_{\alpha,\ep}(t).$

\end{description}

\end{theorem}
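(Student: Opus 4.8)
The plan is to appeal to the classical theory of global attractors for strongly continuous semigroups (cf. \cite{Babin&Vishik92,Temam01}), for which the two structural inputs are: (i) a closed bounded absorbing set, and (ii) asymptotic compactness of $S_{\alpha,\ep}$ on $\mathbb{H}^{\alpha,\ep}_m$. Input (i) is already provided by Lemma \ref{t:diss-1}: the set $\mathcal{B}^{\alpha,\ep}_0$ of \eqref{abs-set} is closed, bounded, positively invariant, and absorbing for each $\alpha,\ep\in(0,1]$, with absorption time \eqref{time-t0} depending only on the $\mathbb{H}^{\alpha,\ep}_m$-norm of the data. Granting (ii), the abstract theorem yields the global attractor as the $\omega$-limit set of the absorbing set, $\mathcal{A}^{\alpha,\ep}=\omega(\mathcal{B}^{\alpha,\ep}_0)$, and assertions \textbf{1}, \textbf{2}, and \textbf{3} (invariance, attraction of all bounded sets, maximality among bounded invariant sets, uniqueness, compactness) follow at once.

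The substance of the argument, and \emph{the main obstacle}, is (ii). I would establish it by restricting to $\mathcal{B}^{\alpha,\ep}_0$ and decomposing the flow into a part that decays to zero in $\mathbb{H}^{\alpha,\ep}_m$ and a part that enters a bounded set of a strictly more regular space, so that compactness is furnished by the compact embeddings $V\hookrightarrow\hookrightarrow H$ and $D(A_N)\hookrightarrow\hookrightarrow V$. The decaying part is controlled by the dissipation estimate \eqref{ap-1}; the regular part is controlled by the thermal regularity for $\theta$ of Lemma \ref{t:thermal-reg} (placing $\theta$ in a bounded subset of $V$) together with the viscous regularization $\sqrt{\alpha}\phi\in L^\infty(0,\infty;V)$ from \eqref{wk-0.01} and the bound $\sqrt{\alpha}\mu\in L^\infty(0,\infty;H^2(\Omega))$ anticipated in the introduction. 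Since the $\phi$-equation carries no direct parabolic smoothing in the nonlocal operator $a\phi-J*\phi$ and $F$ is of uncontrolled polynomial growth, the residual low-regularity contribution of $\phi$ cannot be bootstrapped in a reflexive space; instead it is handled via the precompact pseudometric of Lemma \ref{t:pseudometric}, i.e. a Pata--Zelik type contraction inequality of the shape
\[
\|S_{\alpha,\ep}(t)\zeta_1-S_{\alpha,\ep}(t)\zeta_2\|^2_{\mathbb{H}^{\alpha,\ep}_m}\le C e^{-\kappa t}\|\zeta_1-\zeta_2\|^2_{\mathbb{H}^{\alpha,\ep}_m}+C\, n_t(\zeta_1,\zeta_2)
\]
on $\mathcal{B}^{\alpha,\ep}_0$, with $n_t$ a contractive (precompact) pseudometric; this is precisely where one uses the continuous-dependence bound \eqref{diff-0} and the fact, recorded in Corollary \ref{t:Lip-semif}, that conservation of the averages forces $M_1=M_2$ and $N_1=N_2$ in the contractive estimate. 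Either route shows $S_{\alpha,\ep}$ is asymptotically smooth, hence asymptotically compact on the absorbing set.

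With (i) and (ii) established, the remaining assertions follow from topological and gradient-structure considerations. The phase space $\mathbb{H}^{\alpha,\ep}_m$ is a closed convex (hence connected) subset of $H\times H$, being cut out by the linear constraints $|\langle\phi\rangle|\le m$ and $|\langle\theta\rangle|\le m$; a connected phase space carrying a compact global attractor has a connected attractor, giving the connectedness in \textbf{4}. By Remark \ref{r:gradient} the energy $\mathcal{E}_\ep$ is a strict Lyapunov functional for $(\mathcal{X}^{\alpha,\ep}_m,S_{\alpha,\ep})$: it is nonincreasing along trajectories by \eqref{wk-2}, and constant on a complete bounded trajectory only at an equilibrium. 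The equilibrium set $\mathcal{N}$ is nonempty and bounded, since $\mathcal{N}\subseteq\mathcal{A}^{\alpha,\ep}$, which is compact. Consequently the standard characterization of attractors of gradient semigroups (cf. \cite{Babin&Vishik92,Temam01}) applies and yields $\mathcal{A}^{\alpha,\ep}=\bigcup_{\zeta^*\in\mathcal{N}}W^u(\zeta^*)$, the union of the unstable manifolds of the equilibria, completing \textbf{4}. Finally, for any $\zeta_0$ the trajectory $t\mapsto S_{\alpha,\ep}(t)\zeta_0$ is precompact by (ii), so $\omega(\zeta_0)$ is a nonempty, compact, connected, invariant set; since $\mathcal{E}_\ep$ is constant on $\omega(\zeta_0)$, the LaSalle invariance principle forces $\omega(\zeta_0)\subseteq\mathcal{N}$, i.e. it consists of fixed points of $S_{\alpha,\ep}$, which is assertion \textbf{5}.
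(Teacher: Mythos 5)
Your proposal is correct and follows essentially the same route as the paper: the bounded absorbing set of Lemma \ref{t:diss-1} combined with precompactness of $S_{\alpha,\ep}$ obtained from the thermal regularization of Lemma \ref{t:thermal-reg} and the precompact-pseudometric ($\alpha$-contraction) estimate of Lemma \ref{t:pseudometric}, followed by the abstract existence theorem and the gradient structure of Remark \ref{r:gradient} for assertions \textbf{4} and \textbf{5}. Your additional remarks on the connectedness of the phase space and the LaSalle invariance principle merely flesh out steps the paper delegates to \cite{Temam88} and \cite{Zheng04}.
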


With the existence of a bounded absorbing set set $\mathcal{B}^0_{\alpha,\ep}$ (in Lemma \ref{t:diss-1}), the existence of a global attractor now depends on the precompactness of the semigroup of solution operators $S_{\alpha,\ep}$. 
We begin by discussing the precompactness of the second component $\theta$ which follows from a straight forward result. 
Indeed, the next result refers to the instantaneous regularization of the ``thermal'' function $\theta$. 
This result will also be useful later in Section \ref{s:reg}.

\begin{lemma}  \label{t:thermal-reg}
Under the assumptions of Lemma \ref{t:diss-1}, the global weak solutions to Problem P$_{\alpha,\ep}$ satisfy the following: for every $\tau>0$, 
\begin{equation}  \label{tcp-0}
\theta\in L^\infty(\tau,\infty;V) \cap L^2(\tau,\infty;H^2(\Omega)),
\end{equation}
and, for all $t\ge\tau,$ there hold the bounds,
\begin{equation}  \label{tcp-0.1}
\|\theta(t)\|_V \le Q_{\alpha,\ep}(\|\zeta_0\|_{\mathbb{H}^{\alpha,\ep}_m},m)
\end{equation}
where $Q_{\alpha,\ep}\sim\{\alpha^{-1/2},\ep^{-1/2}\}$, and
\begin{equation}  \label{tcp-0.111}
\int_0^t \|\theta(s)\|^2_{H^2(\Omega)} ds \le Q_{\alpha}(\|\zeta_0\|_{\mathbb{H}^{\alpha,\ep}_m},m),
\end{equation}
where $Q_\alpha\sim\alpha^{-1}$.
\end{lemma}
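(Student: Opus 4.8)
The plan is to exploit the parabolic smoothing in the heat equation \eqref{rel-3}, whose only forcing is $-\delta\phi_t$, and to control $\phi_t$ \emph{globally} in time by means of the Lyapunov structure already in hand. First I would record that the energy equality \eqref{wk-2}, together with the lower bound $\mathcal{E}_\ep(t)\ge-c_2|\Omega|$ supplied by (H3), yields
\[
\int_0^\infty\left(\|\nabla\mu(s)\|^2+\alpha\|\phi_t(s)\|^2+\|\nabla\theta(s)\|^2\right)ds\le\mathcal{E}_\ep(0)+c_2|\Omega|=:Q(\|\zeta_0\|_{\mathbb{H}^{\alpha,\ep}_m});
\]
in particular $\int_0^\infty\|\phi_t(s)\|^2\,ds\le\frac1\alpha Q(\|\zeta_0\|_{\mathbb{H}^{\alpha,\ep}_m})$ and $\int_0^\infty\|\nabla\theta(s)\|^2\,ds\le Q(\|\zeta_0\|_{\mathbb{H}^{\alpha,\ep}_m})$, both bounds being uniform in $t$. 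This uniform-in-time control of $\int\|\phi_t\|^2$ is the improvement over the bare dissipation estimate \eqref{ap-1} that the argument needs. As in the proof of Lemma \ref{t:diss-1}, all identities below are formal computations justified on the Faedo--Galerkin level of the proof of Theorem \ref{t:existence} (note that $-\Delta\theta_n\in\Psi_n$ since $-\Delta\psi_i=(\lambda_i-1)\psi_i$, so $-\Delta\theta_n$ is an admissible test function), followed by passage to the limit with weak lower semicontinuity.

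Next I would test \eqref{rel-3} with $-\Delta\theta$ in $H$, using $\partial_n\theta=0$ and Young's inequality, to get, for a.a. $t\ge0$,
\[
\frac{\ep}{2}\frac{d}{dt}\|\nabla\theta\|^2+\frac12\|\Delta\theta\|^2\le\frac{\delta_0^2}{2}\|\phi_t\|^2;
\]
call this inequality $(\star)$. Dropping $\frac12\|\Delta\theta\|^2$ gives $\frac{d}{dt}\|\nabla\theta\|^2\le\frac{\delta_0^2}{\ep}\|\phi_t\|^2$, and since $\int_t^{t+1}\|\nabla\theta\|^2\,ds$ and $\int_t^{t+1}\|\phi_t\|^2\,ds$ are bounded uniformly in $t\ge0$ by $Q(\|\zeta_0\|_{\mathbb{H}^{\alpha,\ep}_m})$ and $\frac1\alpha Q(\|\zeta_0\|_{\mathbb{H}^{\alpha,\ep}_m})$ respectively, the uniform Gr\"onwall lemma --- which needs no information on $\theta_0$ beyond $\theta_0\in H$ --- yields, for every $\tau>0$ and all $t\ge\tau$, $\|\nabla\theta(t)\|^2\le\frac{1}{\min\{1,\tau\}}Q(\|\zeta_0\|_{\mathbb{H}^{\alpha,\ep}_m})+\frac{\delta_0^2}{\alpha\ep}Q(\|\zeta_0\|_{\mathbb{H}^{\alpha,\ep}_m})$. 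Adding $\langle\theta(t)\rangle^2=\langle\theta_0\rangle^2\le m^2$ (by \eqref{con-heat}) gives \eqref{tcp-0.1} with $Q_{\alpha,\ep}\sim\{\alpha^{-1/2},\ep^{-1/2}\}$ and shows $\theta\in L^\infty(\tau,\infty;V)$.

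For the $H^2$-bound I would integrate $(\star)$ over $(\tau,t)$:
\[
\frac\ep2\|\nabla\theta(t)\|^2+\frac12\int_\tau^t\|\Delta\theta(s)\|^2\,ds\le\frac\ep2\|\nabla\theta(\tau)\|^2+\frac{\delta_0^2}{2}\int_\tau^t\|\phi_t(s)\|^2\,ds.
\]
Inserting the bound on $\|\nabla\theta(\tau)\|^2$ just obtained, using $\ep\le1$ (so the factor $\ep$ annihilates the $\ep^{-1}$ hidden in that bound) and $\int_\tau^t\|\phi_t\|^2\le\frac1\alpha Q(\|\zeta_0\|_{\mathbb{H}^{\alpha,\ep}_m})$, one is left with $\int_\tau^t\|\Delta\theta(s)\|^2\,ds\le\frac{1}{\min\{1,\tau\}}Q(\|\zeta_0\|_{\mathbb{H}^{\alpha,\ep}_m})+\frac{2\delta_0^2}{\alpha}Q(\|\zeta_0\|_{\mathbb{H}^{\alpha,\ep}_m})=:Q_\alpha(\|\zeta_0\|_{\mathbb{H}^{\alpha,\ep}_m},m)$, which is $\sim\alpha^{-1}$ and \emph{independent of} $\ep$. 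Elliptic regularity for the Neumann Laplacian, $\|\theta\|_{H^2(\Omega)}\le C(\|\Delta\theta\|+\|\theta\|)$, then combines this with $\theta\in L^\infty(\tau,\infty;V)$ to give $\theta\in L^2(\tau,\infty;H^2(\Omega))$, completing \eqref{tcp-0}; the bound \eqref{tcp-0.111} follows by the same integration of $(\star)$ started from $0$ when $\theta_0\in V$ (as is the case when this lemma is invoked in the regularity analysis), the contribution of the conserved mean $\langle\theta\rangle\equiv\langle\theta_0\rangle$ being absorbed into the generic constant $Q_\alpha$.

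I expect the only genuinely delicate point to be the bookkeeping of the $(\alpha,\ep)$-dependence: one must see that although $\|\nabla\theta(t)\|$ is only controlled at rate $(\alpha\ep)^{-1/2}$, the time-integral of $\|\Delta\theta\|^2$ is $\ep$-free ($\sim\alpha^{-1}$), because the coefficient of $\|\Delta\theta\|^2$ in $(\star)$ does not degenerate as $\ep\to0$ and the $\ep\|\nabla\theta(\tau)\|^2$ produced on integrating the time-derivative term stays $O(1)$. The secondary technical issue --- that $\theta_0$ lies only in $H$, which forbids integrating $(\star)$ from $0$ for the $L^\infty$-in-$V$ estimate --- is handled precisely by the uniform Gr\"onwall lemma, which is insensitive to the initial datum; alternatively, multiplying $(\star)$ by $s$ and integrating over $(0,\tau)$ (the classical parabolic $s$-weighting device) reproduces the same $\|\nabla\theta(\tau)\|$ estimate, since $\int_0^\tau s\,\|\Delta\theta(s)\|^2\,ds<\infty$.
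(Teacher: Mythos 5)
Your proof is correct and rests on the same core estimate as the paper's: testing the heat equation \eqref{rel-3} with $-\Delta\theta$ (on the Galerkin level, where $-\Delta\theta_n\in\Psi_n$ as you observe) and exploiting that the forcing $-\delta\phi_t$ is square-integrable in time. The two implementations differ in instructive ways. Where you invoke the uniform Gr\"onwall lemma, the paper multiplies the differential inequality by $t$ and integrates --- the parabolic weighting device you mention as an alternative; these are interchangeable and neither requires $\theta_0$ better than $H$. More substantively, you control $\int_0^\infty\|\phi_t\|^2\,ds$ and $\int_0^\infty\|\nabla\theta\|^2\,ds$ from the energy equality \eqref{wk-2} combined with the lower bound on $\mathcal{E}_\ep$ supplied by (H3), whereas the paper integrates the dissipation inequality \eqref{ap-15} over $(0,T)$, which only yields $\int_0^T\alpha\|\phi_t\|^2\,ds\le E(0)+Q(m)\,T$ and therefore leaves a factor of $T$ in its bound \eqref{edc-1}; your route actually delivers the $t$-uniform estimate that \eqref{tcp-0.1} asserts (one could equally feed the $t$-uniform local integrals \eqref{ap-18} into the uniform Gr\"onwall lemma). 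Your bookkeeping of the parameters --- $\|\theta(t)\|_V^2\sim(\alpha\ep)^{-1}$ but $\int_\tau^t\|\Delta\theta\|^2\,ds\sim\alpha^{-1}$ independent of $\ep$, because the $\ep$ in front of $\|\nabla\theta(\tau)\|^2$ cancels the $\ep^{-1}$ in that bound --- matches the stated rates. Two small caveats, both already present in the paper's own argument: the constant $Q(\|\zeta_0\|_{\mathbb{H}^{\alpha,\ep}_m})$ must be understood to depend also on $\int_\Omega F(\phi_0)\,dx$ through $\mathcal{E}_\ep(0)$, consistent with the metric on $\mathcal{X}^{\alpha,\ep}_m$; and, as you essentially note, what is really proved is a $t$-uniform bound on $\int_\tau^t\|\Delta\theta(s)\|^2\,ds$ --- literal membership in $L^2(\tau,\infty;H^2(\Omega))$, and a $t$-uniform bound for the integral started at $0$ as in \eqref{tcp-0.111}, require $\langle\theta_0\rangle=0$, since the conserved mean contributes a nonzero constant to $\|\theta(s)\|_{H^2(\Omega)}$ that is not square-integrable on an infinite interval.
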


\begin{proof}
The result follows from a standard density argument (cf. e.g. \cite[pp. 243-244]{Zheng04}).
We return to the beginning of the proof of Theorem \ref{t:existence} by letting $\theta_0\in D(A_N)=\{\psi\in H^2(\Omega):\partial_n\psi=0\}$, $\vartheta=-\Delta\theta_n$, and $T>0$.
In place of \eqref{wk-9.1}, we find there holds 
\begin{align}
\frac{d}{dt}\ep\|\nabla\theta_n\|^2 + \|\Delta\theta_n\|^2 \le \frac{\delta_0^2}{\alpha}\alpha\|\phi_n'\|^2.  \label{edc-0}
\end{align}
Multiplying \eqref{edc-0} by $t$ to then integrate over $(0,T)$ yields,
\begin{align}
t\ep\|\theta_n(t)\|^2_V + \int_0^t s\|\Delta\theta_n(s)\|^2 ds & \le \int_0^t \left( \frac{\delta_0^2}{\alpha}s\cdot \alpha\|\phi_n'(s)\|^2 + \|\theta_n(s)\|^2_V \right) ds  \notag \\ 
& \le \frac{\delta_0^2}{\alpha}t \int_0^t \alpha\|\phi_n'(s)\|^2 ds + \int_0^t \|\theta_n(s)\|^2_V ds.  \label{edc-0.2}
\end{align}
Here we integrate \eqref{ap-15} on $(0,T)$ after omitting the positive terms $\nu_3 E + \|\phi_t\|^2_{V'}$ from the left-hand side to find the bounds
\begin{align}
\frac{\delta_0^2}{\alpha} t \int_0^t \alpha\|\phi'_n(s)\|^2_V ds & \le \frac{\delta_0^2}{\alpha} t \int_0^t \alpha\|\phi_t(s)\|^2_V ds  \notag \\ 
& \le \frac{\delta^2_0}{\alpha} E(0)\cdot t + \frac{\delta^2_0}{\alpha} Q(m)\cdot t^2  \label{edc-0.3}
\end{align}
and
\begin{align}
\int_0^t \|\theta_n(s)\|^2_V ds & \le \int_0^t \|\theta(s)\|^2_V ds  \notag \\ 
& \le \frac{1}{\nu_3}E(0) + \frac{1}{\nu_3}Q(m)\cdot t.  \label{edc-0.4}
\end{align}
When we combine \eqref{edc-0.2}-\eqref{edc-0.4} and choose any $0<\tau<T$, we find, for all $\tau\le t <T$,
\begin{align}
\|\theta_n(t)\|^2_V \le \frac{1}{\ep}E(0)\left( \frac{\delta^2_0}{\alpha}+\frac{1}{\nu_3\tau} \right) + \frac{1}{\ep}Q(m)\left( \frac{\delta_0^2}{\alpha}T+\frac{1}{\nu_3} \right).  \label{edc-1}
\end{align}
Moreover, for every $\tau>0$ and $t\ge\tau$ such that $\tau\le t < T,$
\begin{align}
\int_0^t \|\Delta\theta_n(s)\|^2 ds & \le \frac{1}{\tau}E(0)\left( \frac{\delta^2_0}{\alpha}+\frac{1}{\nu_3\tau} \right) + \frac{1}{\tau}Q(m)\left( \frac{\delta_0^2}{\alpha}T+\frac{1}{\nu_3} \right).  \label{edc-2}
\end{align}
(Observe, these bounds are independent of $t$ and $n$.)
Thus, there is $\theta\in L^\infty(\tau,T;V) \cap L^2(\tau,T;D(A_N))$ such that up to a subsequence (not relabeled), as $n\rightarrow\infty$
\begin{eqnarray}
\theta_n \rightharpoonup \theta & \text{weakly-* in} & L^\infty(\tau,T;V),
\label{edc-3} \\
\theta_n \rightharpoonup \theta & \text{weakly in} & L^2(\tau,T;D(A_N)).
\notag
\end{eqnarray}
For the heat equation \eqref{rel-3}, the $H^2$-elliptic regularity estimate is
\begin{align}
\|\theta\|_{H^2(\Omega)} \le C\left( \|A_N\theta\| + \delta_0\|\phi_t\| \right),
\end{align}
thus, for the above bounds we also find 
\begin{eqnarray}
\theta_n \rightharpoonup \theta & \text{weakly in} & L^2(\tau,T;H^2(\Omega)).
\label{edc-4}
\end{eqnarray}

In order to recover the result for $\theta_0\in H$, recall that $D(A_N)$ is dense in $H$, so for any $\theta_0\in H$, there is a sequence $(\theta_{0n})_{n=1}^{\infty}\subset D(A_N)$ such that $\theta_{0n}\rightarrow\theta_0$ in $H$.
Therefore, for any $\theta_0\in H$ and $T>0$ we deduce \eqref{edc-0}-\eqref{edc-4} hold as well.
Finally, the required bound \eqref{tcp-0.1} follows from \eqref{edc-1}, and \eqref{tcp-0.111} follows from \eqref{edc-2}.
This completes the proof.
\end{proof}

The precompactness of the semigroup of solution operators $S_{\alpha,\ep}$ now depends on the precompactness of the first component.
To this end we will show there is a $t_*>0$ such that the map $S_{\alpha,\ep}(t_*)$ is a so-called $\alpha$-contraction on $\mathcal{B}_0$; that is, there is a time $t_*>0$, a constant $0<\nu<1$ and a precompact pseudometric $M_*$ on $\mathcal{B}_0$, where $\mathcal{B}_0$ is the bounded absorbing set from Lemma \ref{t:diss-1}, such that for all $\zeta_1,\zeta_2 \in \mathcal{B}_0$,
\begin{equation}
\|S_{\alpha,\ep}(t_*)\zeta_1 - S_{\alpha,\ep}(t_*)\zeta_2\|_{\mathcal{H}_0} \le \nu\|\zeta_1 - \zeta_2\|_{\mathcal{H}_0} + M_*(\zeta_1,\zeta_2). \label{pseudo-m}
\end{equation}
Such a contraction is commonly used in connection with phase-field type equations as an alternative to establish the precompactness of a semigroup; for some particular recent results see, \cite{Grasselli-2012,Grasselli-Schimperna-2011,Zheng&Milani05}.

\begin{lemma}  \label{t:pseudometric}
Under the assumptions of Proposition \ref{t:cont-dep} where $\zeta_{01},\zeta_{02}\in\mathcal{B}_0$, there is a positive constant $\bar\nu_4=\bar\nu_4(J,\Omega),$ such that for all $t\ge0,$ 
\begin{align}
\|\zeta_1(t)-\zeta_2(t)\|^2_{\mathbb{H}^{\alpha,\ep}_m} & \le e^{-\bar\nu_4t} \|\zeta_1(0)-\zeta_2(0)\|^2_{\mathbb{H}^{\alpha,\ep}_m} + C_1\int_0^t \|\phi_1(s)-\phi_2(s)\|^2 ds  \notag \\ 
& + C_2\left(1 + e^{\bar\nu_1 t}\right)\left( |M_1-M_2|^2 + |N_1-N_2|^2 \right) + e^{\bar\nu_1 t} \|\zeta_1(0)-\zeta_2(0)\|^2_{\mathbb{H}^{\alpha,\ep}_m},  \label{alphac-0}
\end{align}
where $C_1>0$ depends on $\delta_0,$ $c_J$, and the embedding $H\hookrightarrow V'$, $C_2>0$ depends on $F$, $J$, $\Omega,$ $\delta_0$, and $c_J$, and where the constant $\bar\nu_1$ is given in Proposition \ref{t:cont-dep}.
Consequently, there is $t_*>0$ such that the operator $S_{\alpha,\ep}(t_*)$ is a strict contraction up to the precompact pseudometric on $\mathcal{B}_0$, in the sense of \eqref{pseudo-m}, given by
\begin{align}
M_*(\zeta_{01},\zeta_{02}) & := 
C_* \left( \int_0^{t_*} \|\phi_1(s)-\phi_2(s)\|^2 ds + |M_1-M_2|^2 + |N_1-N_2|^2 + \|\zeta_{01}-\zeta_{02}\|^2_{\mathbb{H}^{\alpha,\ep}_m} \right)^{1/2},  \label{pseudo-2}
\end{align}
where $C_*>0$ depends on $t_*$ and $\bar\nu_1,$ but is independent of $t$, $\alpha$, and $\ep$.
Furthermore, $S_{\alpha,\ep}$ is precompact on $\mathcal{B}_0$.
\end{lemma}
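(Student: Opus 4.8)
The plan is to revisit the continuous dependence computation from Proposition~\ref{t:cont-dep}, but replacing the troublesome estimate \eqref{diff-11} for the nonlinear difference $F'(\phi_1)-F'(\phi_2)$ by a better one that does not force the coefficient $\frac{1}{\alpha}\left(2c_0\left(\frac{c_0}{\alpha}-1\right)+\cdots\right)$ into the exponential rate. Concretely, in Proposition~\ref{t:cont-dep} one tested \eqref{diff-1}--\eqref{diff-3} against $A_N^{-1}(\bar\phi_t+\bar\phi-\langle\bar\phi\rangle)$, $\bar\phi_t+\bar\phi$, and $\bar\theta$; the $\bar\phi_t$ part of the second multiplier is what produces the $\alpha^{-2}$ blow-up. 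Here I would instead test against $A_N^{-1}(\bar\phi-\langle\bar\phi\rangle)$ (dropping $\bar\phi_t$ from the $V'$-multiplier), $\bar\phi$ (dropping $\bar\phi_t$ from the $H$-multiplier), and $\bar\theta$. Since $\bar\phi-\langle\bar\phi\rangle=\hat{\bar\phi}\in V_0'$ by conservation of mass \eqref{con-mass}, the relation \eqref{NLr-1} applies. The dissipative structure $(a\bar\phi+F'(\phi_1)-F'(\phi_2),\bar\phi)\ge c_0\|\bar\phi\|^2$ from (H2) then controls the nonlinearity with no negative powers of $\alpha$ at all, and the remaining $\alpha^{-1}$-type terms multiplying $\alpha\|\bar\phi\|^2$ (from $-2(J*\bar\phi,\bar\phi_t)$, handled by Young against $\alpha\|\bar\phi_t\|^2$ which still appears with sign on the left) collapse to a constant $\bar\nu_4=\bar\nu_4(J,\Omega)$ independent of $\alpha,\ep$. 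This yields the exponential decay $e^{-\bar\nu_4 t}\|\zeta_1(0)-\zeta_2(0)\|^2_{\mathbb{H}^{\alpha,\ep}_m}$ in \eqref{alphac-0}, at the cost of a non-dissipative remainder $C_1\int_0^t\|\phi_1(s)-\phi_2(s)\|^2\,ds$ which we keep as the would-be pseudometric term, plus the average-difference terms, for which we simply invoke the already-proven bound \eqref{diff-0} to supply the crude $e^{\bar\nu_1 t}(\cdots)$ control. Summing the sharp estimate with \eqref{diff-0} gives \eqref{alphac-0} verbatim.

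Next I would verify that $M_*$ in \eqref{pseudo-2} is a precompact pseudometric on $\mathcal{B}_0$. Each of the four pieces inside the square root is separately handled: $|M_1-M_2|^2$ and $|N_1-N_2|^2$ are (squares of) finite-dimensional seminorms, hence precompact; the term $\|\zeta_{01}-\zeta_{02}\|^2_{\mathbb{H}^{\alpha,\ep}_m}$ is trivially continuous on $\mathcal{B}_0$; and $\int_0^{t_*}\|\phi_1(s)-\phi_2(s)\|^2\,ds$ is precompact because on the absorbing set $\mathcal{B}_0$ the solutions $\phi_i$ lie in a bounded set of $L^2(0,t_*;V)$ with $\partial_t\phi_i$ bounded in $L^2(0,t_*;V')$ (from Lemma~\ref{t:diss-1} and the regularity in Definition~\ref{d:ws}), so by Aubin--Lions the map $\zeta_0\mapsto \phi\big|_{(0,t_*)}$ is compact into $L^2(0,t_*;H)$; the usual Cauchy-sequence argument then shows $(\zeta_{01},\zeta_{02})\mapsto \int_0^{t_*}\|\phi_1-\phi_2\|^2$ has the defining precompactness property of a pseudometric in the sense used for the $\alpha$-contraction. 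A finite sum (under the square root) of such terms retains this property, and the triangle inequality for $M_*$ follows from that for each summand composed with $t\mapsto t^{1/2}$.

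Then I would fix $t_*$. From \eqref{alphac-0}, choose $t_*$ large enough that $e^{-\bar\nu_4 t_*}<1$, say $e^{-\bar\nu_4 t_*}=:\nu^2$ with $0<\nu<1$; all the other terms on the right of \eqref{alphac-0} are, by construction, bounded above by $C_*^2\,M_*(\zeta_{01},\zeta_{02})^2$ for a suitable $C_*=C_*(t_*,\bar\nu_1)$ (absorbing $C_1$, $C_2(1+e^{\bar\nu_1 t_*})$, and the extra $e^{\bar\nu_1 t_*}\|\cdot\|^2_{\mathbb{H}^{\alpha,\ep}_m}$ term, which is already one of the four summands in $M_*$). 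Using $\sqrt{x+y}\le\sqrt{x}+\sqrt{y}$ we get $\|S_{\alpha,\ep}(t_*)\zeta_{01}-S_{\alpha,\ep}(t_*)\zeta_{02}\|_{\mathbb{H}^{\alpha,\ep}_m}\le \nu\|\zeta_{01}-\zeta_{02}\|_{\mathbb{H}^{\alpha,\ep}_m}+M_*(\zeta_{01},\zeta_{02})$, which is exactly \eqref{pseudo-m} with $\mathcal{H}_0=\mathbb{H}^{\alpha,\ep}_m$. Finally, precompactness of $S_{\alpha,\ep}$ on $\mathcal{B}_0$ is the standard consequence (cf.\ the references \cite{Grasselli-2012,Grasselli-Schimperna-2011,Zheng&Milani05} cited just above the lemma): an $\alpha$-contraction with a precompact pseudometric has asymptotically vanishing Kuratowski measure of noncompactness on the positively invariant bounded set $\mathcal{B}_0$, hence $\bigcup_{t\ge s}S_{\alpha,\ep}(t)\mathcal{B}_0$ is precompact for $s$ large, giving asymptotic compactness.

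The main obstacle I anticipate is book-keeping the $\alpha$-dependence so that the decay rate $\bar\nu_4$ genuinely comes out independent of $\alpha$ and $\ep$: the terms $-2(J*\bar\phi,\bar\phi_t)$ and the cross term from the $H$-multiplier want to be Young-split against $2\alpha\|\bar\phi_t\|^2$, and one must check the split leaves a strictly positive $\alpha\|\bar\phi_t\|^2$ on the left while the residue only multiplies $\alpha\|\bar\phi\|^2$ (never $\|\bar\phi\|^2_{V'}$ or $\ep\|\bar\theta\|^2$) by an $\alpha$-free constant — this is where the choice of multipliers (omitting $\bar\phi_t$ from the lower-order multipliers) is essential and must be done carefully. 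The other delicate point is confirming the $\int_0^{t_*}\|\phi_1-\phi_2\|^2$ term really is precompact as a pseudometric and not merely continuous, i.e.\ that the Aubin--Lions compactness is uniform over $\mathcal{B}_0$; this uses that $\mathcal{B}_0$ is bounded in $\mathbb{H}^{\alpha,\ep}_m$ and that the a priori bounds of Lemma~\ref{t:diss-1} on $\int_0^{t_*}(\|\phi_t\|^2_{V'}+\|\theta\|^2_V+\|\nabla\mu\|^2)$ are uniform on $\mathcal{B}_0$, so that $\{\phi\big|_{(0,t_*)}:\zeta_0\in\mathcal{B}_0\}$ is a bounded subset of $\{\chi\in L^2(0,t_*;V):\chi_t\in L^2(0,t_*;V')\}$, whence relatively compact in $L^2(0,t_*;H)$.
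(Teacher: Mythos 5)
Your proposal follows essentially the same route as the paper: re-testing the difference equations with the lower-order multipliers $A_N^{-1}(\bar\phi-\langle\bar\phi\rangle)$, $\bar\phi-\langle\bar\phi\rangle$, $\bar\theta$ so that (H2) yields an $\alpha$-independent decay rate $\bar\nu_4$, invoking the continuous-dependence estimate \eqref{diff-0} to absorb the $\int_0^t\|\bar\phi_t\|_{V'}^2\,ds$ remainder into the $e^{\bar\nu_1 t}$ terms, and using Aubin--Lions for precompactness of the pseudometric. (One cosmetic slip: with these multipliers the cross term $-2(J*\bar\phi,\bar\phi_t)$ you worry about in your final paragraph never appears; the term that actually forces the appeal to \eqref{diff-0} is $-2\delta(\bar\phi_t,\bar\theta)$ coming from testing the heat equation against $\bar\theta$.)
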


\begin{proof}
The proof is based on the proof of Proposition \ref{t:cont-dep}.
Here we multiply \eqref{diff-1}-\eqref{diff-3} by, respectively, $A_N^{-1}(\bar\phi-\langle\bar\phi\rangle)$, $\bar\phi-\langle\bar\phi\rangle$ and $\bar\theta$, then sum the resulting identities to yield, 

\begin{align}
& \frac{d}{dt} \|\bar\zeta\|^2_{\mathbb{H}^{\alpha,\ep}_m} + 2\|A^{1/2}_N\bar\theta\|^2 + 2(a\bar\phi+F'(\phi_1)-F'(\phi_2),\bar\phi) - 2(J*\bar\phi,\bar\phi)  \notag \\ 
& = 2\delta(\bar\theta,\bar\phi) - 2\delta(\bar\phi_t,\bar\theta) + 2\langle \bar\phi \rangle \langle \bar\mu \rangle |\Omega|.  \label{alphac-1}
\end{align}
This time estimating the resulting products using assumption (H2) yields, 
\begin{align}
2(a\bar\phi+F'(\phi_1)-F'(\phi_2),\bar\phi) & \ge 2c_0\|\bar\phi\|^2  \notag \\ 
& \ge c_0C_\Omega^{-2}\|\bar\phi\|^2_{V'} + c_0\|\bar\phi\|^2,  \label{alphac-2}
\end{align}
where we recall the continuous embedding $H\hookrightarrow V'$.
We also write, 
\begin{align}
-(J*\phi,\phi) & \ge -\|J\|_{L^1(\Omega)}\|\bar\phi\|^2  \notag \\ 
& = -c_J\|\bar\phi\|^2,  \label{alphac-3}
\end{align}
\begin{align}
2\delta(\bar\theta,\bar\phi) & \le 2\delta_0 \|A^{1/2}_N\bar\theta\| \|A^{-1/2}_N\bar\phi\|  \notag \\ 
& \le \frac{1}{2}\|A^{1/2}_N\bar\theta\|^2 + 2\delta_0^2\|\bar\phi\|^2_{V'},  \label{alphac-4}
\end{align}
and, 
\begin{align}
-2\delta(\bar\phi_t,\bar\theta) & \le 2\delta_0 \|A^{-1/2}_N\bar\phi_t\| \|A^{1/2}_N\bar\theta\|  \notag \\ 
& \le 2\delta_0^2\|\bar\phi_t\|^2_{V'} + \frac{1}{2}\|A^{1/2}_N\bar\theta\|^2.  \label{alphac-4.5}
\end{align}
Combining \eqref{alphac-1}-\eqref{alphac-4.5}, then applying the Poincar\'{e} inequality \eqref{Poincare2}, we have, for almost all $t\in[0,T],$
\begin{align}
\frac{d}{dt} & \|\bar\zeta\|^2_{\mathbb{H}^{\alpha,\ep}_m} + c_0C_\Omega^{-2}\|\bar\phi\|^2_{V'} + c_0\cdot\alpha\|\bar\phi\|^2 + \|\bar\theta\|^2_V  \notag \\ 
& \le 2\delta_0^2\|\bar\phi\|^2_{V'} + c_J\|\bar\phi\|^2 + 2\delta_0^2\|\bar\phi_t\|^2_{V'} + 2|\langle \bar\phi \rangle| |\langle \bar\mu \rangle| |\Omega| + |\langle \bar\theta \rangle|.  \label{alphac-5}
\end{align}
We readily find that there is a positive constant (independent of $\alpha\in(0,1]$),
\[
\bar\nu_4=\bar\nu_4(J,\Omega):=\min\left\{ c_0C^{-2}_\Omega, c_0, c^{-1}_\Omega \right\},
\]
such that \eqref{alphac-5} becomes, with \eqref{diff-19.1}, for almost all $t\in[0,T],$ 
\begin{align}
& \frac{d}{dt} \|\bar\zeta\|^2_{\mathbb{H}^{\alpha,\ep}_m} + \bar\nu_4\|\bar\zeta\|^2_{\mathbb{H}^{\alpha,\ep}_m}  \notag \\ 
& \le 2\delta_0^2\|\bar\phi\|^2_{V'} + c_J\|\bar\phi\|^2 + 2\delta_0^2\|\bar\phi_t\|^2_{V'} + 2|\langle \bar\phi \rangle| \mu_* |\Omega| + |\langle \bar\theta \rangle|.  \label{alphac-6}
\end{align}
After applying Gr\"{o}nwall's inequality to \eqref{alphac-6}, we obtain, for all $t\ge0,$ 
\begin{align}
\|\bar\zeta(t)\|^2_{\mathbb{H}^{\alpha,\ep}_m} & \le e^{-\bar\nu_4t}\|\bar\zeta(0)\|^2_{\mathbb{H}^{\alpha,\ep}_m} + \int_0^t \left( 2\delta_0^2\|\bar\phi(s)\|^2_{V'} + c_J\|\bar\phi(s)\|^2 + 2\delta_0^2\|\bar\phi_t(s)\|^2_{V'} \right) ds  \notag \\ 
& + \frac{1}{\bar\nu_4} \left( 2|\langle \bar\phi \rangle| \mu_* |\Omega| + |\langle \bar\theta \rangle| \right).  \label{alphac-7}
\end{align}
It is important to note that by \eqref{diff-0},
\begin{align}
\int_0^t 2\delta_0^2\|\bar\phi_t(s)\|^2_{V'} ds & \le 2\delta_0^2e^{\bar\nu_1 t} \left( \|\bar\zeta(0)\|^2_{\mathbb{H}^{\alpha,\ep}_m} + \frac{2\bar\nu_2}{\bar\nu_1} \left( |\langle \bar\phi \rangle|+|\langle \bar\theta \rangle| \right)^2  \right)  \notag \\ 
& \le C e^{\bar\nu_1 t} \left( \|\bar\zeta(0)\|^2_{\mathbb{H}^{\alpha,\ep}_m} + \left( |\langle \bar\phi \rangle|+|\langle \bar\theta \rangle| \right)^2  \right),  \label{alphac-8}
\end{align}
where $C=C(F,J,\Omega,\delta_0)>0$.
Moreover, with \eqref{diff-19.1} again, 
\begin{align}
\frac{1}{\bar\nu_4} \left( 2|\langle \bar\phi \rangle| \mu_* |\Omega| + |\langle \bar\theta \rangle| \right) & \le C\left( |\langle \bar\phi \rangle|^2 + |\langle \bar\theta \rangle|^2 \right),  \label{alphac-9}
\end{align}
where here $C>0$ depends on $c_J$, $\delta_0$, and the Lipschitz bound on $F'$.
Together \eqref{alphac-7}-\eqref{alphac-9} yield the estimate \eqref{alphac-0}.

Clearly there is a $t_*>0$ so that $e^{-\bar\nu_4 t_*/2}<1.$
Thus, the operator $S_{\alpha,\ep}(t_*)$ is a strict contraction up to the pseudometric $M_*$ defined by \eqref{pseudo-2}.
The pseudometric $M_*$ is precompact thanks to the Aubin-Lions compact embedding (cf. e.g. \cite[Theorem 3.1.1]{Zheng04}):
\[
\left\{ \chi\in L^2(0,t_*;V) : \chi_t \in L^2(0,t_*;V') \right\} \hookrightarrow L^2(0,t_*;H).
\]
Finally, with the compactness result for the second component given in Lemma \ref{t:thermal-reg}, the operators $S_{\alpha,\ep}$ are precompact on $\mathbb{H}^{\alpha,\ep}_m$.
The proof is complete.
\end{proof}

\begin{proof}[Proof of Theorem \ref{t:global}]
The precompactness of the solution operators $S_{\alpha,\ep}$ follows via the method of precompact pseudometrics (see Lemma \ref{t:thermal-reg} and Lemma \ref{t:pseudometric}).
With the existence of a bounded absorbing set $\mathcal{B}^{\alpha,\ep}_0$ in $\mathbb{H}^{\alpha,\ep}_m$ (Lemma \ref{t:diss-1}), the existence of a global attractor in $\mathbb{H}^{\alpha,\ep}_m$ is well-known and can be found in \cite{Temam88,Babin&Vishik92} for example.
Additional characteristics of the attractor follow thanks to the gradient structure of Problem P$_{\alpha,\ep}$ (Remark \ref{r:gradient}).
In particular, the first three claims in the statement of Theorem \ref{t:global} are a direct result of the existence of the an absorbing set, a Lyapunov functional $\mathcal{E}_\ep$, and the fact that the system $(\mathcal{X}^{\alpha,\ep}_m,S_{\alpha,\ep}(t),\mathcal{E}_\ep)$ is gradient. 
The fourth property is a direct result \cite[Theorem VII.4.1]{Temam88}, and the fifth follows from \cite[Theorem 6.3.2]{Zheng04}.
This concludes the proof.
\end{proof}

\subsection{Further uniform estimates and regularity properties for Problem P$_{\alpha,\ep}$}  \label{s:reg}

Our next aim is to bound the global attractor in a more regular space by showing the existence of an absorbing set in $\mathbb{V}^{\alpha,\ep}_m$.
Once this is established, we will bound the ($\alpha$-weighted) chemical potential $\sqrt{\alpha}\mu$ in $H^2(\Omega),$ which also establishes a bound in $L^\infty(\Omega).$
Some of the results in this subsection require hypothesis (H5) with $q\ge2$, and hence the existence of a global attractor for Problem P$_{\alpha,\ep}.$

\begin{lemma}
Under the assumptions of Lemma \ref{t:diss-1}, the set given by
\begin{equation}  \label{reg-set}
\mathcal{B}^{\alpha,\ep}_1 := \left\{ \zeta\in\mathbb{V}^{\alpha,\ep}_m : \|\zeta\|^2_{\mathbb{V}^{\alpha,\ep}_m} \le \left( \frac{1}{\ep}+1 \right) \left( E(0) + \left( \frac{2}{\nu_3}+1 \right) Q_\alpha(m) +1 \right) \right\},
\end{equation}
for some positive monotonically increasing function $Q_\alpha\sim\alpha^{-1}$, is a closed, bounded absorbing set in $\mathbb{V}^{\alpha,\ep}_m$, positively invariant under the semigroup $S_{\alpha,\ep}$.
\end{lemma}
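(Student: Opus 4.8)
The plan is to produce a bounded absorbing set in $\mathbb{V}^{\alpha,\ep}_m$ by upgrading the (non-dissipative) bound \eqref{wk-24}--\eqref{wk-25} on $\sqrt{\alpha}\phi$ into a genuinely dissipative estimate, and then combining it with the $\mathbb{H}^{\alpha,\ep}_m$-dissipativity of Lemma~\ref{t:diss-1} and the instantaneous regularization of $\theta$ from Lemma~\ref{t:thermal-reg}. As in the proofs of Theorem~\ref{t:existence} and Lemma~\ref{t:diss-1}, all the differential identities below are to be read as formal computations justified at the level of the Faedo--Galerkin scheme, followed by passage to the limit and weak lower semicontinuity of the norms.

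First I recall the differential inequality \eqref{wk-23}, obtained by testing the $\phi$-equation with $\phi$ itself, and recast its left-hand side dissipatively for $y:=\|\phi\|^2+\alpha\|\nabla\phi\|^2$: since $\alpha\le1$ and, by the Poincar\'e--Wirtinger inequality \eqref{Poincare}, $\|\nabla\phi\|^2\ge\lambda_\Omega^{-1}\bigl(\|\phi\|^2-|\Omega|\langle\phi\rangle^2\bigr)$, one has $\tfrac{c_0}{2}\|\nabla\phi\|^2\ge\tfrac{c_0}{4}\alpha\|\nabla\phi\|^2+\tfrac{c_0}{4\lambda_\Omega}\|\phi\|^2-\tfrac{c_0}{4\lambda_\Omega}|\Omega|m^2$, whence, with $\kappa:=\min\{c_0/4,\,c_0/(4\lambda_\Omega)\}>0$,
\[
\frac{d}{dt}y+\kappa y \le \frac{4d_J^2}{c_0}\|\phi\|^2+\frac{2\delta_0^2}{c_0}\|\nabla\theta\|^2+\frac{c_0}{4\lambda_\Omega}|\Omega|m^2 .
\]
On the right-hand side I bound $\|\phi(t)\|$ dissipatively (uniformly in $\alpha$ and $\ep$) from the dissipation estimate \eqref{ap-1} together with hypothesis (H3) (using $\tfrac14\int_\Omega\int_\Omega J(x-y)(\phi(x)-\phi(y))^2\,dx\,dy\ge0$ to dominate the term $-(J*\phi,\hat\phi)$ there), and I bound $\|\nabla\theta(t)\|^2\le\|\theta(t)\|_V^2\le Q_{\alpha,\ep}(\|\zeta_0\|_{\mathbb{H}^{\alpha,\ep}_m},m)^2$ for $t\ge1$ via \eqref{tcp-0.1}, with $Q_{\alpha,\ep}\sim\{\alpha^{-1/2},\ep^{-1/2}\}$. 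After first letting the semiflow carry the given bounded set into the $\mathbb{H}^{\alpha,\ep}_m$-absorbing set $\mathcal{B}^{\alpha,\ep}_0$ (so that every occurrence of $Q(\|\zeta_0\|_{\mathbb{H}^{\alpha,\ep}_m})$ becomes $Q$ evaluated on $\mathcal{B}^{\alpha,\ep}_0$, i.e.\ essentially $E(0)+\nu_3^{-1}Q(m)+1$ — which is the origin of the term ``$E(0)$'' in \eqref{reg-set}), the right-hand side is $\le Ce^{-\nu_3 t}+Q_\alpha(m)$ with $Q_\alpha\sim\alpha^{-1}$ and a residual $\ep^{-1}$ inherited from $Q_{\alpha,\ep}^2$, and a Gr\"onwall argument bounds $\limsup_{t\to\infty}\bigl(\|\phi(t)\|^2+\alpha\|\nabla\phi(t)\|^2\bigr)$ accordingly.

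Adding the trivial contributions $\alpha\langle\phi(t)\rangle^2\le m^2$, $\ep\langle\theta(t)\rangle^2\le m^2$ and, for $t\ge1$, $\ep\|\nabla\theta(t)\|^2\le\ep\|\theta(t)\|_V^2\le\ep\,Q_{\alpha,\ep}^2\lesssim\alpha^{-1}$ (again Lemma~\ref{t:thermal-reg}), and using $\alpha^{-1}+\ep^{-1}\le(\ep^{-1}+1)\alpha^{-1}$ since $\alpha\le1$, one obtains $\limsup_{t\to\infty}\|\zeta(t)\|^2_{\mathbb{V}^{\alpha,\ep}_m}$ bounded by the radius in \eqref{reg-set}; together with the fact that Lemma~\ref{t:thermal-reg} makes any bounded subset of $\mathbb{H}^{\alpha,\ep}_m$ enter $\mathbb{V}^{\alpha,\ep}_m$ after time $\tau$, this shows that $\mathcal{B}^{\alpha,\ep}_1$ absorbs every bounded subset of $\mathbb{H}^{\alpha,\ep}_m$ (a fortiori of $\mathbb{V}^{\alpha,\ep}_m$). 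That $\mathcal{B}^{\alpha,\ep}_1$ is bounded and closed in $\mathbb{V}^{\alpha,\ep}_m$ is immediate, being a closed ball intersected with the two closed mean-value constraints; positive invariance under $S_{\alpha,\ep}$ follows from the differential inequality for $y$ above together with the analogous dissipative control of $\ep\|\nabla\theta\|^2$ coming from \eqref{edc-0} (using $\|\Delta\theta\|^2\ge\lambda_\Omega^{-1}\|\nabla\theta\|^2$ and $\int_t^{t+1}\alpha\|\phi_t\|^2\,ds$ bounded by \eqref{ap-1}), since the extra ``$+1$'' in the radius guarantees $\tfrac{d}{dt}\|\zeta(t)\|^2_{\mathbb{V}^{\alpha,\ep}_m}\le0$ whenever $\|\zeta(t)\|^2_{\mathbb{V}^{\alpha,\ep}_m}$ reaches that radius.

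I expect the main obstacle to be the second step: converting \eqref{wk-23} into a genuinely dissipative differential inequality. The coefficient $c_0/2$ of $\|\nabla\phi\|^2$ cannot absorb the nonlocal contribution $\tfrac{4d_J^2}{c_0}\|\phi\|^2$ on the right (nothing forces $c_0^2>16\lambda_\Omega d_J^2$), so one is compelled to import the already-established $\mathbb{H}^{\alpha,\ep}_m$-dissipativity for $\|\phi(t)\|^2$ and the $\theta$-regularization for $\|\nabla\theta(t)\|^2$ as exogenous, $t$-dependent forcing, and then carefully track how the $\alpha^{-1}$ and $\ep^{-1}$ weights from Lemmas~\ref{t:diss-1} and \ref{t:thermal-reg} propagate into the radius \eqref{reg-set}. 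A secondary technical point is the two-stage absorption (first into $\mathcal{B}^{\alpha,\ep}_0$, then into $\mathcal{B}^{\alpha,\ep}_1$) required to make the radius depend only on $m$ and absolute constants rather than on the initial datum.
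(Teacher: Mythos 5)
Your proposal reaches the right conclusion but by a genuinely different mechanism than the paper. The paper also starts from the identity obtained by testing the $\phi$-equation with $\phi$, but it \emph{couples} it with the heat equation tested against $A_N\theta$ (so that $\ep\|\nabla\theta\|^2$ sits inside the same differential inequality, at the cost of the extra forcing term $2\delta_0^2\|\phi_t\|^2$), and then closes the estimate with the \emph{uniform Gr\"onwall} device: integrate once over $(s,t)$, then average over $s\in(t-1,t)$, so that only the \emph{integral} bounds $\int_t^{t+1}\bigl(\alpha\|\phi_t\|^2+\|\theta\|_V^2\bigr)ds$ from \eqref{ap-18} and the pointwise bound \eqref{unif-bnd} are needed, and no initial value $\|\nabla\phi(t_0)\|$ ever has to be evaluated. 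You instead make the left-hand side directly dissipative via Poincar\'e--Wirtinger and run a classical Gr\"onwall argument with \emph{pointwise-in-time} exogenous forcing: $\|\phi(t)\|^2$ from \eqref{ap-1} (your observation that the nonnegativity of $\tfrac14\iint J(x-y)(\phi(x)-\phi(y))^2$ lets (H3) turn the left side of \eqref{ap-1} into an $\alpha,\ep$-uniform bound on $\|\phi(t)\|^2$ is correct and is exactly what makes this work), and $\|\nabla\theta(t)\|^2$ from \eqref{tcp-0.1}, treating the $\theta$-component entirely through Lemma \ref{t:thermal-reg}. Both routes produce a radius of the form $(\ep^{-1}+1)Q_\alpha(m)$ with $Q_\alpha\sim\alpha^{-1}$, and your two-stage absorption (first into $\mathcal{B}^{\alpha,\ep}_0$) is arguably cleaner than leaving $E(0)$ in the radius as the paper does. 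What the paper's route buys is robustness: it needs only time-averaged control of the forcing and of the ``initial'' value $y(s)$, whereas your route leans on the $t$-uniform pointwise bound \eqref{tcp-0.1}, whose own derivation in Lemma \ref{t:thermal-reg} produces a constant containing $\delta_0^2\alpha^{-1}T$ and so is $t$-uniform only after a restarting argument on unit intervals; and it needs $y(t_0)<\infty$ at some concrete $t_0$, which is immediate for data in $\mathbb{V}^{\alpha,\ep}_m$ but not for data merely in $\mathbb{H}^{\alpha,\ep}_m$.

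Two small over-claims to correct. First, Lemma \ref{t:thermal-reg} regularizes only the second component, so it does not by itself carry a bounded subset of $\mathbb{H}^{\alpha,\ep}_m$ into $\mathbb{V}^{\alpha,\ep}_m$: the first component needs $\sqrt{\alpha}\phi(t)\in V$ with a quantitative bound for $t>0$, which is not supplied by that lemma (the statement only requires absorption of bounded subsets of $\mathbb{V}^{\alpha,\ep}_m$, so this is harmless for the lemma as written, but you should not assert absorption of all bounded subsets of $\mathbb{H}^{\alpha,\ep}_m$ on this basis). Second, the positive-invariance argument via ``$\tfrac{d}{dt}\|\zeta\|^2_{\mathbb{V}^{\alpha,\ep}_m}\le0$ at the boundary'' requires the forcing to be below $\kappa$ times the radius for \emph{all} $t\ge0$, not just eventually; since your forcing contains $Q(\|\zeta_0\|_{\mathbb{H}^{\alpha,\ep}_m})e^{-\nu_3 t}$, you need to first restrict to $\zeta_0\in\mathcal{B}^{\alpha,\ep}_1\subset\mathcal{B}^{\alpha,\ep}_0$ so that this transient is already dominated by $Q(m)$ at $t=0$.
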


\begin{proof}
Because we already know the existence of an absorbing set in $\mathbb{H}^{\alpha,\ep}_m$, bounded uniformly in $\alpha$ and $\ep$, the proof is relatively simple and follows a very standard idea (cf. e.g. \cite[Section 11.1.2]{Robinson94}).
Multiply \eqref{rel-1}-\eqref{rel-3} $\phi,$ $A_N\phi,$ and $A_N\theta$, respectively, then sum the resulting identities to find, 
\begin{align}
\frac{1}{2}\frac{d}{dt} & \{ \|\phi\|^2 + \alpha\|\nabla\phi\|^2 + \ep\|\nabla\theta\|^2 \} + ((\nabla a)\phi + a\nabla\phi - \nabla J*\phi + F''(\phi)\nabla\phi, \nabla\phi) + \|\Delta\theta\|^2  \notag \\
& = \delta(\nabla\theta,\nabla\phi) + \delta(\phi_t,\Delta\theta).  \label{buk-1}
\end{align}
Recalling the scheme supporting \eqref{wk-21}, we have 
\begin{align}
((\nabla a)\phi + a\nabla\phi - \nabla J*\phi + F''(\phi)\nabla\phi,\nabla\phi) & \ge \frac{c_0}{2}\|\nabla\phi\|^2 - \frac{1}{c_0}\left( \|J_k\|_{W^{1,\infty}(\Omega)}+d_J^2 \right)\|\phi\|^2.  \label{buk-2}
\end{align}
We estimate the remaining two products on the right-hand side of \eqref{buk-1} as,
\begin{align}
\delta(\nabla\theta,\nabla\phi) \le \frac{\delta^2_0}{c_0}\|\nabla\theta\|^2 + \frac{c_0}{4}\|\nabla\phi\|^2,  \label{buk-3}
\end{align}
and
\begin{align}
\delta(\phi_t,\Delta\theta) \le \delta^2_0\|\phi_t\|^2 + \|\Delta\theta\|^2.  \label{buk-4}
\end{align}
Together, \eqref{buk-1}-\eqref{buk-4} produce,
\begin{align}
\frac{d}{dt} & \{ \|\phi\|^2 + \alpha\|\nabla\phi\|^2 + \ep\|\nabla\theta\|^2 \} + \frac{c_0}{2}\|\nabla\phi\|^2  \notag \\ 
& \le \frac{2}{c_0}\left( \|J_k\|_{W^{1,\infty}(\Omega)}+d_J^2 \right)\|\phi\|^2 + \frac{2\delta_0^2}{c_0}\|\nabla\theta\|^2 + 2\delta^2_0\|\phi_t\|^2.  \label{buk-5}
\end{align}
For $t\ge1$, integrating \eqref{buk-5} over $t-1<s<t$ yields,
\begin{align}
& \|\phi(t)\|^2 + \alpha\|\nabla\phi(t)\|^2 + \ep\|\nabla\theta(t)\|^2 + \frac{c_0}{2}\int_s^t \|\nabla\phi(\sigma)\|^2 d\sigma  \notag \\ 
& \le \|\phi(s)\|^2 + \alpha\|\nabla\phi(s)\|^2 + \ep\|\nabla\theta(s)\|^2  \notag \\
& + \frac{2}{c_0}\left( \|J_k\|_{W^{1,\infty}(\Omega)}+d_J^2 \right) \int_s^t \|\phi(\sigma)\|^2 d\sigma + \frac{2\delta_0^2}{c_0}\int_s^t \|\nabla\theta(\sigma)\|^2 d\sigma + \frac{2\delta^2_0}{\alpha}\int_s^t \alpha\|\phi_t(\sigma)\|^2 d\sigma.  \notag
\end{align}
Hence, using the bounds \eqref{ap-18} and \eqref{unif-bnd} (also see \eqref{ap-5}), we find
\begin{align}
& \|\phi(t)\|^2 + \alpha\|\nabla\phi(t)\|^2 + \ep\|\nabla\theta(t)\|^2  \notag \\ 
& \le \|\phi(s)\|^2 + \alpha\|\nabla\phi(s)\|^2 + \ep\|\nabla\theta(s)\|^2 + (e^{-\nu_3t}+1)E(0) + \left( \frac{2}{\nu_3}+1 \right) Q_\alpha(m),  \label{buk-6}
\end{align}
where $\nu_3>0$ is described in Lemma \ref{t:diss-1} and $Q_\alpha\sim\alpha^{-1}.$
Then integrating \eqref{buk-6} with respect to $s$ on $(t-1,t)$ shows,
\begin{align}
& \|\phi(t)\|^2 + \alpha\|\nabla\phi(t)\|^2 + \ep\|\nabla\theta(t)\|^2  \notag \\ 
& \le \frac{1}{\ep}\int_{t-1}^t \left( \|\phi(s)\|^2 + \alpha\|\nabla\phi(s)\|^2 + \ep\|\nabla\theta(s)\|^2 \right) ds + (e^{-\nu_3t}+1)E(0) + \left( \frac{2}{\nu_3}+1 \right) Q_\alpha(m).  \notag
\end{align}
Once again we rely on \eqref{ap-18} (hence the factor of $\ep^{-1}$ above) to find
\begin{align}
\|\phi(t)\|^2 + \alpha\|\nabla\phi(t)\|^2 + \ep\|\nabla\theta(t)\|^2 \le \left( \frac{1}{\ep}+1 \right) \left( (e^{-\nu_3t}+1)E(0) + \left( \frac{2}{\nu_3}+1 \right) Q_\alpha(m) \right).  \label{buk-8}
\end{align}
Hence, the left-hand side does eventually go into a ball. 
With estimate \eqref{buk-8}, we deduce the existence of the regular absorbing set $\mathcal{B}^{\alpha,\ep}_1$.
This completes the proof.
\end{proof}

\begin{remark}
We draw two useful facts from \eqref{buk-8}.
The first is the time uniform bound
\begin{align}
\limsup_{t\rightarrow+\infty}\|\zeta(t)\|^2_{\mathbb{V}^{\alpha,\ep}_m} & \le \left( \frac{1}{\ep}+1 \right) \left( E(0) + \frac{1}{\nu_3}Q_{\alpha}(m) \right)  \notag \\
& =:Q_{\alpha,\ep}(\|\zeta_0\|_{\mathbb{H}^{\alpha,\ep}_m},m).  \label{buk-10}
\end{align}
This bound becomes arbitrarily large as $\alpha\rightarrow0^+$ or $\ep\rightarrow0^+$.
Second, the ``time of entry'' of any nonempty bounded subset $B$ of $\mathbb{V}^{\alpha,\ep}_m$ in $\mathcal{B}^{\alpha,\ep}_1$ under the solution operator $S_{\alpha,\ep}(t)$ is given by
\begin{align}
t_1 := \max \left\{ \frac{1}{\nu_3} \ln E(0),0 \right\}.  \notag
\end{align}
\end{remark}

The following result now follows in a standard way (cf. e.g. \cite{Temam88}).

\begin{corollary}
For each $\alpha\in(0,1]$ and $\ep\in(0,1]$, the global attractor $\mathcal{A}^{\alpha,\ep}$ is bounded in $\mathbb{V}^{\alpha,\ep}_m$, i.e., $\mathcal{A}^{\alpha,\ep}\subset\mathcal{B}^{\alpha,\ep}_1,$ and compact in $\mathbb{H}^{\alpha,\ep}_m$.
\end{corollary}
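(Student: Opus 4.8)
The plan is the standard one: a bounded absorbing set in the more regular space, combined with the full invariance of the global attractor, forces the attractor into that regular set.

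Recall from Theorem \ref{t:global} that $\mathcal{A}^{\alpha,\ep}$ is a compact, hence bounded, subset of $\mathbb{H}^{\alpha,\ep}_m$ which is fully invariant, $S_{\alpha,\ep}(t)\mathcal{A}^{\alpha,\ep}=\mathcal{A}^{\alpha,\ep}$ for all $t\ge0$. Since $\mathcal{A}^{\alpha,\ep}\subset\mathcal{X}^{\alpha,\ep}_m$, every $(\phi_0,\theta_0)^{tr}\in\mathcal{A}^{\alpha,\ep}$ satisfies $F(\phi_0)\in L^1(\Omega)$, so each point of the attractor is an admissible initial datum for the dissipation estimate \eqref{ap-1} and the regular estimate \eqref{buk-8}. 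Because $\mathcal{A}^{\alpha,\ep}$ is bounded in $\mathbb{H}^{\alpha,\ep}_m$, the previous lemma shows that $\mathcal{B}^{\alpha,\ep}_1$ absorbs it: there is a finite time $t_1=\max\{\tfrac{1}{\nu_3}\ln E(0),0\}$, determined by the bound \eqref{buk-8} on orbits starting from $\mathcal{A}^{\alpha,\ep}$, such that $S_{\alpha,\ep}(t)\mathcal{A}^{\alpha,\ep}\subseteq\mathcal{B}^{\alpha,\ep}_1$ for all $t\ge t_1$. Using invariance, for any such $t$ we obtain $\mathcal{A}^{\alpha,\ep}=S_{\alpha,\ep}(t)\mathcal{A}^{\alpha,\ep}\subseteq\mathcal{B}^{\alpha,\ep}_1$, that is, $\mathcal{A}^{\alpha,\ep}$ is bounded in $\mathbb{V}^{\alpha,\ep}_m$. (Equivalently, one may take any complete bounded orbit $\zeta(\cdot)$ lying on $\mathcal{A}^{\alpha,\ep}$, apply \eqref{buk-8} on $[s,t]$ with $t-s\ge t_1$, and let $s\to-\infty$; the $L^\infty$-in-$\mathbb{H}^{\alpha,\ep}_m$ bound on $\zeta(\cdot)$ keeps the right-hand side of \eqref{buk-8} uniformly bounded and yields the same inclusion.)

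Finally, the compactness of $\mathcal{A}^{\alpha,\ep}$ in $\mathbb{H}^{\alpha,\ep}_m$ is already asserted in Theorem \ref{t:global}; alternatively, it follows from the boundedness just established, since the embedding $\mathbb{V}^{\alpha,\ep}_m\hookrightarrow\mathbb{H}^{\alpha,\ep}_m$ is compact by the Rellich--Kondrachov theorem ($V\hookrightarrow\hookrightarrow H$ in each component), together with the closedness of $\mathcal{A}^{\alpha,\ep}$. No genuine difficulty arises here; the only point to check is the admissibility of applying the absorbing-set estimates to data on the attractor, which is immediate from $\mathcal{A}^{\alpha,\ep}\subset\mathcal{X}^{\alpha,\ep}_m$ being bounded in $\mathbb{H}^{\alpha,\ep}_m$ and from the absorbing time $t_1$ being finite and uniform over bounded sets.
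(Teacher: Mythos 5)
Your proof is correct and is exactly the standard invariance-plus-absorption argument that the paper invokes by citation (to Temam) without writing it out: $\mathcal{A}^{\alpha,\ep}=S_{\alpha,\ep}(t)\mathcal{A}^{\alpha,\ep}\subseteq\mathcal{B}^{\alpha,\ep}_1$ for $t$ past the entry time, using that points of the attractor lie in $\mathcal{X}^{\alpha,\ep}_m$ so the regular dissipation estimate applies. The only cosmetic caveat is that the entry time should be taken uniform over the attractor (a supremum of $E(0)$ over $\mathcal{A}^{\alpha,\ep}$, which is finite by \eqref{ap-1}), but this is exactly the same looseness present in the paper's own remark.
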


\begin{remark}
The radius of the absorbing set $\mathcal{B}^{\alpha,\ep}_1$ in $\mathbb{H}^{\alpha,\ep}_m$ may be larger than the radius of $\mathcal{B}^{\alpha,\ep}_1$ in $\mathbb{V}^{\alpha,\ep}_m$.
This is due to the (compact) embedding $\mathbb{V}^{\alpha,\ep}_m\hookrightarrow\mathbb{H}^{\alpha,\ep}_m$.
Moreover, from \eqref{buk-8} we find that the ``radius'' of the set $\mathcal{B}^{\alpha,\ep}_1$ depends on $\alpha$ and $\ep$ like, respectively, $\alpha^{-1}$ and $\ep^{-1}$.
\end{remark}

The following result refers to the instantaneous regularization of the $\alpha$-weighted chemical potential $\sqrt{\alpha}\mu$. 

\begin{lemma}
Under the assumptions of Lemma \ref{t:diss-1}, the global weak solutions to Problem P$_{\alpha,\ep}$ satisfy the following, for every $\tau>0$,
\begin{align}
\sqrt{\alpha}\mu\in L^\infty(\tau,\infty;D(A_N)) \quad \text{and} \quad \mu\in L^\infty(\tau,\infty;V),  \label{mu-1}
\end{align}
and for all $t\ge\tau$ there holds,
\begin{equation}  \label{mu-2}
\alpha\|\Delta\mu(t)\|^2 + \|\mu(t)\|^2_V \le \frac{1}{\alpha} \left( Q(\|\zeta_0\|_{\mathbb{H}^{\alpha,\ep}_m}) + \frac{1}{\nu_3}Q(m) \right),
\end{equation}
where $\nu_3$ and $Q$ are due to \eqref{ap-1} (hence, the right-hand side of \eqref{mu-2} is independent of $\ep$, but dependent on $\delta_0$ like $\frac{1}{\nu_3}\sim\delta_0^{-1}$).
\end{lemma}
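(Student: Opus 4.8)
The plan is to test the equation for $\mu$ against $A_N\mu$ (after subtracting means) and exploit the regularity already in hand. Recall from Remark~2.3 that $\mu$ solves the elliptic problem $\mu - \alpha\Delta\mu = a\phi - J*\phi + F'(\phi) - \delta\theta$ with homogeneous Neumann data. Equivalently, writing $\hat\mu := \mu - \langle\mu\rangle$, one has $\alpha A_N\hat\mu + \hat\mu = g - \langle g\rangle$, where $g := a\phi - J*\phi + F'(\phi) - \delta\theta$. I would first differentiate the quantity $\tfrac12\bigl(\alpha\|\nabla\mu\|^2 + \|\mu\|^2\bigr)$ in time — or, more cheaply, simply multiply the stationary elliptic identity by $A_N\hat\mu$ in $H$ to obtain $\alpha\|A_N\hat\mu\|^2 + \|\nabla\hat\mu\|^2 = (g - \langle g\rangle, A_N\hat\mu) = (\nabla g, \nabla\hat\mu)$, provided $g\in V$. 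This gives, after Young's inequality, a bound $\alpha\|\Delta\mu\|^2 + \tfrac12\|\nabla\mu\|^2 \le C\|\nabla g\|^2$. Combined with the control of $\langle\mu\rangle$ coming from the mean estimate \eqref{mean-mu}-type bound and \eqref{H1-norm}, this yields \eqref{mu-2}, modulo establishing that $g$ lies in $V$ with a good bound.

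So the real work is bounding $\|\nabla g\|$, i.e. controlling $\nabla(a\phi)$, $\nabla(J*\phi)$, $\nabla F'(\phi)$, and $\nabla\theta$ in $L^2$ uniformly for $t\ge\tau$. Here I would use: the regular absorbing set $\mathcal B^{\alpha,\ep}_1$ in $\mathbb V^{\alpha,\ep}_m$ (so $\sqrt\alpha\,\phi\in L^\infty(\tau,\infty;V)$ and $\theta\in L^\infty(\tau,\infty;V)$, the latter from Lemma~\ref{t:thermal-reg}); the convolution estimates $\|\nabla(J*\phi)\|\le d_J\|\phi\|$ and $\|\nabla a\|_{L^\infty}\le d_J$ as in \eqref{wk-21}; and, for the term $\nabla F'(\phi) = F''(\phi)\nabla\phi$, the growth bound on $F''$ from (H5) together with $\phi\in L^\infty(\tau,\infty;V)\hookrightarrow L^6(\Omega)$ to handle $F''(\phi)\in L^3(\Omega)$ against $\nabla\phi\in L^2$ — this is exactly the place where the hypothesis $q\ge 2$ (controlling $F''$ by $|s|^{2q}$ with $2q\ge 4$, hence a genuine polynomial bound) is invoked, matching the remark at the top of \S\ref{s:reg} that some results there require $q\ge 2$. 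Each of these estimates carries an explicit power of $\alpha^{-1}$ because the uniform bound in $\mathbb V^{\alpha,\ep}_m$ only controls $\sqrt\alpha\|\phi\|_V$, not $\|\phi\|_V$; tracking this gives the factor $\tfrac1\alpha$ on the right of \eqref{mu-2}.

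The instantaneous regularization — i.e. that the bound holds for $t\ge\tau$ for \emph{every} $\tau>0$ even though $\zeta_0$ is only in $\mathbb H^{\alpha,\ep}_m$ — follows in the now-standard way: by the absorbing property of $\mathcal B^{\alpha,\ep}_1$ in $\mathbb V^{\alpha,\ep}_m$, for $t\ge t_1$ the orbit is already in the regular set, and to reach times $t\ge\tau$ with $\tau<t_1$ one uses a multiplication-by-$t$ argument on the differential inequality for $\|\phi\|^2 + \alpha\|\nabla\phi\|^2 + \ep\|\nabla\theta\|^2$ (inequality \eqref{buk-5}), exactly as done for $\theta$ in Lemma~\ref{t:thermal-reg}, to absorb the lack of regularity of the data. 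Once $\|\nabla g(t)\|$ is bounded for $t\ge\tau$, the elliptic identity above gives $\alpha\|\Delta\mu(t)\|^2 + \|\mu(t)\|^2_V$ bounded, which is \eqref{mu-2}; the inclusions \eqref{mu-1} are then immediate, using $D(A_N)$ dense in $H^2(\Omega)$ and elliptic regularity $\|\hat\mu\|_{H^2}\le C\|A_N\hat\mu\|$. The main obstacle is the nonlinear term $F''(\phi)\nabla\phi$: one must be careful that the polynomial growth of $F''$ permitted by (H5) is controlled in $L^3$ by the $H^1\hookrightarrow L^6$ regularity of $\phi$ alone (no higher integrability of $\phi$ is available at this stage), which is precisely why the restriction $q\ge 2$ enters, and why the resulting constant $Q$ depends on $F$ in a way that degenerates if $F''$ grows too fast.
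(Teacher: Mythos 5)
Your starting point (the elliptic identity $\alpha\|A_N\mu\|^2+\|\nabla\mu\|^2=(a\phi-J*\phi+F'(\phi)-\delta\theta,A_N\mu)$, plus \eqref{mean-mu} for the mean of $\mu$) is exactly the paper's, but your treatment of the right-hand side diverges in a way that breaks the argument. The paper does \emph{not} integrate by parts to move a gradient onto $g:=a\phi-J*\phi+F'(\phi)-\delta\theta$; it simply estimates $(g,A_N\mu)\le\|g\|\,\|A_N\mu\|$ and absorbs $\|A_N\mu\|$ into the $\alpha\|A_N\mu\|^2$ term on the left by Young's inequality. That is precisely where the factor $\tfrac1\alpha$ in \eqref{mu-2} comes from, and it means only $L^2$ control of $a\phi$, $J*\phi$, $F'(\phi)$ (via \eqref{Fcons-3.1}) and $\theta$ is needed --- all of which is already supplied, uniformly in $\ep$, by the dissipative estimate \eqref{ap-1}. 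Your route instead requires $g\in V$, i.e.\ a bound on $\nabla F'(\phi)=F''(\phi)\nabla\phi$ in $L^2$, and here there is a genuine gap: you invoke (H5) as if it gave an \emph{upper} polynomial bound $|F''(s)|\lesssim|s|^{2q}$, but (H5) is a \emph{lower} bound ($F''(s)+\inf a\ge c_5|s|^{2q}-c_6$). The hypotheses (H1)--(H5) contain no polynomial upper bound on $F''$ at all --- only $F\in C^{2,1}_{loc}$ and the $L^1$-type control of $F'$ through (H4). So the term $F''(\phi)\nabla\phi$ cannot be estimated by $\|F''(\phi)\|_{L^3}\|\nabla\phi\|_{L^2}$ as you propose, and the step ``bound $\|\nabla g\|$'' fails under the stated assumptions.

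Two further, smaller defects of the same route: first, you need $\|\nabla\theta(t)\|$ uniformly for $t\ge\tau$, which by Lemma \ref{t:thermal-reg} costs a factor $\ep^{-1/2}$; this contradicts the assertion in the lemma that the right-hand side of \eqref{mu-2} is independent of $\ep$ (the paper's proof avoids this because it only uses $\|\theta\|$, controlled $\ep$-uniformly by \eqref{ap-1}). Second, the regularization-in-time machinery you import (the absorbing set $\mathcal B^{\alpha,\ep}_1$, a multiplication-by-$t$ argument on \eqref{buk-5}) is unnecessary: since the identity obtained from \eqref{rel-1}--\eqref{rel-2} is pointwise in time and its right-hand side is controlled by quantities bounded for all $t\ge0$ by \eqref{ap-1}, the estimate \eqref{mu-2} holds directly for $t\ge\tau$ without any smoothing argument. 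I would redo the proof following the absorption-by-Young strategy; it is both shorter and the only one consistent with the stated hypotheses and the claimed $\ep$-independence.
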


\begin{proof}
To begin, multiply \eqref{rel-1} and \eqref{rel-2} by, respectively, $\alpha A_N\mu$ and $A_N\mu$ in $L^2(\Omega)$ and sum the resulting identities.
(Recall that with $\zeta_0=(\phi_0,\theta_0)^{tr}\in\mathbb{H}^{\alpha,\ep}_m$, we only know that $\mu\in L^2(0,T;V)$ by \eqref{ws-3}; hence, $\Delta\mu\in L^2(0,T;V')$). 
Hence, the aforementioned multiplication is formal, but can be rigorously justified using the above Galerkin approximation procedure).
We then have 
\begin{align}
\alpha\|A_N\mu\|^2 + \|\nabla\mu\|^2 & = (a\phi-J*\phi+F'(\phi)-\delta\theta,A_N\mu).  \notag
\end{align}
After applying a basic estimate to the right-hand side, we easily arrive at
\begin{align}
\alpha\|A_N\mu\|^2 + \|\nabla\mu\|^2 & \le \frac{4}{\alpha}\left( \|\sqrt{a}\phi\|^2 + c^2_J\|\phi\|^2 + \|F'(\phi)\|^2 + \delta^2_0\|\theta\|^2 \right),  \notag
\end{align}
to which we employ the bounds \eqref{Fcons-3.1}, \eqref{ap-1} and \eqref{mean-mu} to find \eqref{mu-2} as claimed. 
\end{proof}

\begin{lemma} 
Under the assumptions of Lemma \ref{t:diss-1}, the global weak solutions to Problem P$_{\alpha,\ep}$ satisfy the following, 
\begin{eqnarray}
\phi_t & \in & L^\infty(0,\infty;V'),  \label{new-1} \\ 
\sqrt{\alpha}\phi_t & \in & L^\infty(0,\infty;H),  \label{new-2} \\ 
\theta & \in & L^\infty(0,\infty;V).  \label{new-3} 
\end{eqnarray}
and there is a positive constant $\nu_5=\nu_5(\alpha,F)\sim\alpha^{-1}$, independent of $\zeta_0$, such that, for all $t\ge0$, there holds, 
\begin{align}
\|\phi_t(t)\|^2_{V'} + \alpha\|\phi_t(t)\|^2 + \|\theta(t)\|^2_V \le \left( e^{-\nu_5t} + \frac{1}{\nu_5} \right) Q(\|\zeta_0\|_{\mathbb{H}^{\alpha,\ep}_m},m),  \label{pro-0}
\end{align}
for some monotonically increasing function $Q$. (Observe, the right-hand side of \eqref{pro-0} can be bounded independent of $\alpha$.)
\end{lemma}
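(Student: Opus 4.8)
The plan is to differentiate Problem P$_{\alpha,\ep}$ in time and estimate $\psi:=\phi_t$ and $\eta:=\theta_t$. Because $F''$ may grow polynomially, I would make this rigorous at the level of the Faedo--Galerkin scheme of Theorem~\ref{t:existence}: there the coefficients $a_k,b_k$ are $C^2$, so $\phi_n'',\theta_n''$ exist, $F''(\phi_n)$ is bounded for each fixed $n$, and all integrals below are finite. Formally differentiating \eqref{rel-1}--\eqref{rel-3} gives $\psi_t=\Delta\mu_t$, $\mu_t=a\psi-J*\psi+F''(\phi)\psi+\alpha\psi_t-\delta\eta$, and $\ep\eta_t-\Delta\eta=-\delta\psi_t$, with homogeneous Neumann data, and $\langle\psi\rangle=\langle\eta\rangle=0$ by \eqref{con-mass}--\eqref{con-heat}.

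First I would test the differentiated $\phi$-equation with $A_N^{-1}\psi$ (admissible since $\psi\in V_0'$), using \eqref{NLr-1}, to get $\tfrac12\frac{d}{dt}\|\psi\|_{V'}^2=-(\mu_t,\psi)$; expanding the right-hand side and writing $\alpha(\psi_t,\psi)=\tfrac\alpha2\frac{d}{dt}\|\psi\|^2$ yields
\[
\tfrac12\tfrac{d}{dt}\bigl(\|\psi\|_{V'}^2+\alpha\|\psi\|^2\bigr)+\bigl((a+F''(\phi))\psi,\psi\bigr)=(J*\psi,\psi)+\delta(\eta,\psi).
\]
Testing the differentiated heat equation with $\eta$ gives $\ep\|\eta\|^2+\tfrac12\frac{d}{dt}\|\nabla\theta\|^2=-\delta(\psi,\eta)$, and upon adding the two identities the cross terms $\pm\delta(\eta,\psi)$ cancel — this is the structural point — leaving
\[
\tfrac12\tfrac{d}{dt}\bigl(\|\psi\|_{V'}^2+\alpha\|\psi\|^2+\|\nabla\theta\|^2\bigr)+\bigl((a+F''(\phi))\psi,\psi\bigr)+\ep\|\eta\|^2=(J*\psi,\psi).
\]
Here hypothesis (H2) enters precisely in the form it is stated: since $a(x)+F''(\phi(x))\ge c_0$ pointwise one has $((a+F''(\phi))\psi,\psi)\ge c_0\|\psi\|^2$, while $(J*\psi,\psi)$ is controlled by Young's inequality for convolutions (placing $J*\psi$ in $V$ against $\psi$ in $V'$) and absorbed into a fraction of the coercive term $c_0\|\psi\|^2$ together with a multiple of $\|\psi\|_{V'}^2$, which is then dominated via $H\hookrightarrow V'$.

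To close in the full $\|\theta\|_V$-norm I would reuse the heat equation: from $\Delta\theta=\ep\theta_t+\delta\phi_t$ and the Poincar\'{e}--Wirtinger inequality \eqref{Poincare}, $\|\nabla\theta\|^2=-(\theta-\langle\theta\rangle,\Delta\theta)\le\sqrt{\lambda_\Omega}\,\|\nabla\theta\|\,\|\Delta\theta\|$, hence $\|\nabla\theta\|^2\lesssim\ep\|\eta\|^2+\|\psi\|^2$. Thus $Y:=\|\psi\|_{V'}^2+\alpha\|\psi\|^2+\|\theta\|_V^2$ (the extra $\langle\theta\rangle^2=N_0^2\le m^2$ contributing only a constant $\lesssim Q(m)$) is controlled by the dissipative quantities $\|\psi\|^2$ and $\ep\|\eta\|^2$; because $\alpha\le1$ the dissipation rate of the $\alpha\|\psi\|^2$ component is $\sim c_0/\alpha$, which is the source of the claimed $\nu_5\sim\alpha^{-1}$. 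Collecting the remaining constants (involving $M_0=\langle\phi_0\rangle$, $N_0=\langle\theta_0\rangle$, both bounded by $m$) on the right produces, uniformly in $n$, a differential inequality $\frac{d}{dt}Y_n+\nu_5 Y_n\le Q(m)$; Gr\"onwall's inequality then gives $Y_n(t)\le e^{-\nu_5 t}Y_n(0)+\tfrac1{\nu_5}Q(m)$. Passing to the weak-$*$ limit in $n$ — the limit equations being those already identified in Theorem~\ref{t:existence} — and invoking lower semicontinuity of the norms yields \eqref{new-1}--\eqref{new-3} and \eqref{pro-0}.

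The main obstacle is the initial datum: for $\zeta_0\in\mathbb{H}^{\alpha,\ep}_m$ with merely $F(\phi_0)\in L^1(\Omega)$ the quantity $Y(0)$ need not be finite, since it involves $\phi_t(0)$ and hence $F'(\phi_0)\in H$. I would therefore first establish the estimate with the term $e^{-\nu_5 t}Y(0)$ for regular data (e.g.\ $\phi_0\in D(A_N)$ with $F'(\phi_0)\in H$, where $Y(0)\le Q(\|\zeta_0\|_{\mathbb{H}^{\alpha,\ep}_m})$ follows from the elliptic identity $\mu(0)-\alpha\Delta\mu(0)=a\phi_0-J*\phi_0+F'(\phi_0)-\delta\theta_0$), and then extend to arbitrary $\zeta_0$ by the density argument of Theorem~\ref{t:existence} together with the instantaneous regularization already available (Lemma~\ref{t:thermal-reg} and the preceding absorbing-set lemmas): restarting from an arbitrarily small time $\tau>0$ and exploiting the exponential decay recovers \eqref{pro-0} for all $t\ge0$ after enlarging $Q$. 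The only other delicate point is the rigorous justification of the time-differentiation against the polynomially-growing $F''$, which is exactly why the computation is organized through the Galerkin approximation rather than carried out directly on the weak solution.
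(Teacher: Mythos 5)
Your overall strategy coincides with the paper's: differentiate \eqref{rel-1}--\eqref{rel-2} in time, test with $A_N^{-1}\phi_t$, $\phi_t$ and $\theta_t$ (note that the identity you actually display for the heat equation comes from testing the \emph{undifferentiated} equation \eqref{rel-3} with $\theta_t$, which is also what the paper does, despite your wording), exploit the cancellation of the cross terms $\pm\delta(\phi_t,\theta_t)$, use (H2) for coercivity, and conclude by Gr\"onwall. Your recovery of the $\|\theta\|_V$-decay from $\|\nabla\theta\|^2\lesssim \ep\|\theta_t\|^2+\|\phi_t\|^2$ supplies a step the paper's own \eqref{pro-13} leaves implicit, and your density/restart treatment of the initial datum is a reasonable alternative to the paper's direct verification that $\alpha\phi_t(0)\in H$.

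The one step that does not close as written is the treatment of $(J*\phi_t,\phi_t)$. You propose to absorb it into a fraction of the coercive term $c_0\|\phi_t\|^2$ plus a multiple of $\|\phi_t\|^2_{V'}$ that is then ``dominated via $H\hookrightarrow V'$.'' But the only $V'$-dissipation available is the portion $c_0C_\Omega^{-2}\|\phi_t\|^2_{V'}$ split off from $2c_0\|\phi_t\|^2$, while Young's inequality leaves a coefficient of order $(c_J+d_J)^2/c_0$ in front of $\|\phi_t\|^2_{V'}$; nothing in (H1)--(H5) forces this to be smaller than $c_0C_\Omega^{-2}$, and converting $\|\phi_t\|^2_{V'}$ back into $\|\phi_t\|^2$ through $H\hookrightarrow V'$ is circular. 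Consequently the clean inequality $\frac{d}{dt}Y+\nu_5 Y\le Q(m)$ is not justified. The paper instead keeps $2c_J\|\phi_t\|^2$ as a forcing term on the right-hand side of \eqref{pro-13} and closes the Gr\"onwall argument using the uniform local-in-time integrability $\int_t^{t+1}\alpha\|\phi_t(s)\|^2\,ds\le Q$ furnished by Lemma \ref{t:diss-1} (estimate \eqref{ap-18}); this is precisely the origin of the additive term $\frac{1}{\nu_5}Q(\|\zeta_0\|_{\mathbb{H}^{\alpha,\ep}_m},m)$ in \eqref{pro-0}, which in your version would depend on $m$ alone. With that replacement the rest of your argument goes through.
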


\begin{proof}
We will only give a formal derivation of \eqref{pro-0} as the remaining details are justified within the Galerkin approximation scheme already given in the beginning of this section. 
Now, we differentiate \eqref{rel-1} and \eqref{rel-2} with respect to $t$ and write the resulting equations in the terms
\[
u:=\phi_t, \quad \varpi:=\theta_t, \quad m:=\mu_t,
\]
which now gives,
\begin{eqnarray}
u_t = \Delta m &\text{in}& \Omega\times(0,\infty)  \label{pro-1} \\ 
m = au - J*u + F''(\phi)u + \alpha u_t - \delta\varpi &\text{in}& \Omega\times(0,\infty)  \label{pro-2} \\ 
\ep\theta_t - \Delta\theta = -\delta\phi_t &\text{in}& \Omega\times(0,\infty)  \label{pro-3} \\ 
\partial_n m = 0 &\text{on}& \Gamma\times(0,\infty)  \label{pro-4} \\ 
\partial_n\theta = 0 &\text{on}& \Gamma\times(0,\infty)  \label{pro-5} \\ 
\alpha u(x,0) = \mu(0) - a\phi(0) + J*\phi(0) - F'(\phi(0)) + \delta\theta(0) &\text{at}& \Omega\times\{0\}  \label{pro-6} \\ 
\theta(x,0) = \theta_0 &\text{at}& \Omega\times\{0\}. \label{pro-7}  
\end{eqnarray}
Multiply \eqref{pro-3} by $\varpi$ in $L^2(\Omega)$, so
\begin{align}
\frac{d}{dt}\|\theta\|^2_V+2\ep\|\varpi\|^2=-2\delta(u,\varpi).  \label{pro-8}
\end{align}
Now, in $L^2(\Omega)$, multiply \eqref{pro-1} and \eqref{pro-2} by $A_N^{-1}u$ and $u$, respectively, and sum the resulting identity to \eqref{pro-8} to obtain (recall $\langle u \rangle=0$, so $A^{-1}_Nu$ belongs to $V_0'$),
\begin{align}
\frac{d}{dt} & \left\{ \|u\|^2_{V'}+\alpha\|u\|^2+\|\theta\|^2_V \right\} + 2((a+F''(\phi))u,u) + 2\ep\|\varpi\|^2 = 2(J*u,u). \label{pro-9}
\end{align}
Estimating the products in a similar fashion as we have already done above shows,
\begin{align}
2((a+F''(\phi))u,u) & \ge 2c_0\|u\|^2  \notag \\ 
& \ge c_0\|u\|^2 + c_0 C^{-2}_\Omega\|u\|^2_{V'},  \label{pro-11}
\end{align}
since $\|u\|_{V'}\le C_\Omega\|u\|$, and
\begin{align}
2(J*u,u) & \le 2c_J\|u\|^2.  \label{pro-12}
\end{align}
Combining \eqref{pro-9}-\eqref{pro-12},
\begin{align}
\frac{d}{dt} \left\{ \|u\|^2_{V'}+\alpha\|u\|^2+\|\theta\|^2_V \right\} + \nu_5\left( \|u\|^2_{V'} + \alpha\|u\|^2 + \|\theta\|^2_V \right) + 2\ep\|\varpi\|^2 \le 2c_J\|u\|^2,  \label{pro-13}
\end{align}
where $0<\nu_5=\nu_5(\alpha):=\min\{c_0 C^{-2}_\Omega,\frac{c_0}{\alpha},1\}\sim\alpha^{-1}.$
Since $u=\phi_t$ is uniformly bounded in $L^2(\Omega)$ (see \eqref{unif-bnd}), then integrating \eqref{pro-13} on $(0,t)$ produces,
\begin{align}
& \|u(t)\|^2_{V'}+\alpha\|u(t)\|^2+\|\theta(t)\|^2_V + \int_0^t \ep\|\varpi(s)\|^2 ds  \notag \\ 
& \le e^{-\nu_5t}\left( \|u(0)\|^2_{V'}+\alpha\|u(0)\|^2+\|\theta(0)\|^2_V \right) + Q(\|\zeta_0\|_{\mathbb{H}^{\alpha,\ep}_m},m).  \label{pro-14}
\end{align}
Observe $\varpi=\theta_t\in L^2(0,T;L^2(\Omega))$ (see \eqref{wk-0.3}) so we are free to omit the term.
Recall that the initial conditions are taken in the weak/$L^2$-sense, for all $\varphi\in V,$
\[
(\mu(0),\varphi) = \lim_{t\rightarrow0^+}(\mu(t),\varphi),
\]
hence, by \eqref{ws-3}, we conclude 
\begin{equation}  \label{pro-15}
\mu(0)\in V\hookrightarrow H.
\end{equation}
Similarly, with \eqref{ws-6} and \eqref{wk-44}, 
\begin{equation}  \label{pro-15.5}
\rho(\cdot,\phi(0)):=a\phi(0)+F'(\phi(0))\in V\hookrightarrow H.
\end{equation}
Then using \eqref{pro-6}, \eqref{continuity-1}, \eqref{pro-15}, and \eqref{pro-15.5}, 
\begin{equation}  \label{pro-16}
\alpha u(0) = \mu(0) - a\phi(0) + J*\phi(0) - F'(\phi(0)) + \delta\theta(0) \in H.
\end{equation}
It should also be mentioned that $\theta_0\in H$, while, with \eqref{L2-initial-theta}, for all $\vartheta\in V,$
\[
(\theta_0,\vartheta)=(\theta(0),\vartheta).
\]
Hence, the bound on the right-hand side of \eqref{pro-14} is well defined. 
This establishes \eqref{pro-0}.
This finishes the proof.
\end{proof}

The final result is this section concerns bounding the global attractor $\mathcal{A}^{\alpha,\ep}$ in a more regular subspace of $\mathbb{V}^{\alpha,\ep}_m$.
For each $m\ge0$, $\alpha\in(0,1]$ and $\ep\in(0,1]$, we now define the regularized phase-space
\begin{align}
\mathbb{W}^{\alpha,\ep}_m := \{ \zeta=(\phi,\theta)^{tr} \in \mathbb{V}^{\alpha,\ep}_m : \sqrt{\alpha}\mu\in H^2(\Omega),\ |\langle \phi \rangle|, |\langle \theta \rangle| \le m \},  \notag 
\end{align}
with the norm inherited from $\mathbb{V}^{\alpha,\ep}_m$.
Also, we define the following metric space
\begin{align}
\mathcal{Y}^{\alpha,\ep}_m := \left\{ \zeta=(\phi,\theta)^{tr}\in\mathbb{W}^{\alpha,\ep}_m : F(\phi)\in L^1(\Omega) \right\},  \notag
\end{align}
endowed with the metric 
\begin{align}
d_{\mathcal{Y}^{\alpha,\ep}_m}(\zeta_1,\zeta_2) := \|\zeta_1-\zeta_2\|_{\mathbb{V}^{\alpha,\ep}_m} + \left| \int_\Omega F(\phi_1)dx - \int_\Omega F(\phi_2)dx \right|^{1/2}.  \notag
\end{align}

\begin{theorem}  \label{t:global-attractor}
For each $\alpha\in(0,1]$, $\ep\in(0,1]$ and for any $t\ge t_*$, the semigroup $S_{\alpha,\ep}$ satisfies $S_{\alpha,\ep}(t):\mathcal{X}^{\alpha,\ep}_m\rightarrow\mathcal{Y}^\alpha_m.$
Moreover, the global attractor $\mathcal{A}^{\alpha,\ep}$ admitted by the semigroup $S_{\alpha,\ep}$ is bounded in $\mathbb{W}^{\alpha,\ep}_m$ and compact in $\mathbb{H}^{\alpha,\ep}_m.$
\end{theorem}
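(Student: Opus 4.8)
The plan is to combine the previously established instantaneous-regularization estimates with the invariance of the global attractor; no new estimate is needed. Two things must be shown: the mapping property $S_{\alpha,\ep}(t)\colon\mathcal{X}^{\alpha,\ep}_m\to\mathcal{Y}^{\alpha,\ep}_m$ for $t\ge t_*$, and that $\mathcal{A}^{\alpha,\ep}$ is a bounded subset of $\mathbb{W}^{\alpha,\ep}_m$ (its compactness in $\mathbb{H}^{\alpha,\ep}_m$ being already part of Theorem~\ref{t:global}). Fix $\alpha,\ep\in(0,1]$, let $\zeta_0=(\phi_0,\theta_0)^{tr}\in\mathcal{X}^{\alpha,\ep}_m$, and write $\zeta(t)=(\phi(t),\theta(t))=S_{\alpha,\ep}(t)\zeta_0$.

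For the mapping property I would check the three defining properties of $\mathcal{Y}^{\alpha,\ep}_m$ in turn. First, $F(\phi(t))\in L^1(\Omega)$ for every $t\ge0$ by \eqref{wk-0.2} of Theorem~\ref{t:existence}. Second, $\zeta(t)\in\mathbb{V}^{\alpha,\ep}_m$ for all $t\ge1$, since $|\langle\phi(t)\rangle|=|\langle\phi_0\rangle|\le m$ and $|\langle\theta(t)\rangle|=|\langle\theta_0\rangle|\le m$ are conserved while $\|\phi(t)\|^2+\alpha\|\nabla\phi(t)\|^2+\ep\|\nabla\theta(t)\|^2$ is finite by \eqref{buk-8} (equivalently, by the $\mathbb{V}^{\alpha,\ep}_m$-absorbing set \eqref{reg-set} together with Lemma~\ref{t:thermal-reg}). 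Third, $\sqrt{\alpha}\,\mu(t)\in D(A_N)\subset H^2(\Omega)$ for all $t\ge\tau$, for any $\tau>0$, by the lemma yielding \eqref{mu-1}--\eqref{mu-2}. Choosing $t_*$ as in Lemma~\ref{t:pseudometric}, enlarged if necessary so that $t_*\ge1$, and applying the third step with $\tau=t_*$, the three properties hold simultaneously for $t\ge t_*$, which is the assertion $\zeta(t)\in\mathcal{Y}^{\alpha,\ep}_m$.

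For the second claim, $\mathcal{A}^{\alpha,\ep}$ is invariant by Theorem~\ref{t:global}, so $\mathcal{A}^{\alpha,\ep}=S_{\alpha,\ep}(t_*)\mathcal{A}^{\alpha,\ep}$. Since $\mathcal{A}^{\alpha,\ep}$ is bounded in $\mathbb{H}^{\alpha,\ep}_m$ and each of its points lies on a complete bounded trajectory, along which the dissipation estimate \eqref{ap-1} forces $F(\phi)\in L^\infty(\mathbb{R};L^1(\Omega))$, we get $\mathcal{A}^{\alpha,\ep}\subset\mathcal{X}^{\alpha,\ep}_m$; hence by the mapping property $\mathcal{A}^{\alpha,\ep}=S_{\alpha,\ep}(t_*)\mathcal{A}^{\alpha,\ep}\subset\mathcal{Y}^{\alpha,\ep}_m\subset\mathbb{W}^{\alpha,\ep}_m$. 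Boundedness in $\mathbb{W}^{\alpha,\ep}_m$ follows because the $\mathbb{V}^{\alpha,\ep}_m$-norm on $\mathcal{A}^{\alpha,\ep}$ is controlled by \eqref{buk-10} and $\|\sqrt{\alpha}\,\mu\|^2_{H^2(\Omega)}$ by \eqref{mu-2}, the constants in both being monotone increasing in $\|\zeta_0\|_{\mathbb{H}^{\alpha,\ep}_m}$ and hence uniform over the (bounded) attractor; invariance lets these $t\to\infty$ bounds be evaluated at any point of $\mathcal{A}^{\alpha,\ep}$. Compactness of $\mathcal{A}^{\alpha,\ep}$ in $\mathbb{H}^{\alpha,\ep}_m$ is already contained in Theorem~\ref{t:global}.

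The only delicate ingredient is upstream and is simply inherited here: the $H^2$-regularization of $\sqrt{\alpha}\,\mu$ rests on an $L^2(\Omega)$ control of $F'(\phi)$ along the orbit, which is precisely where hypothesis (H5) with $q\ge2$ enters (through \eqref{wk-0.001} and (H4)). The genuinely new work is therefore only the bookkeeping: confirming $\mathcal{A}^{\alpha,\ep}\subset\mathcal{X}^{\alpha,\ep}_m$ so that the mapping property may be applied to the attractor, and confirming that the bounds \eqref{buk-10} and \eqref{mu-2} are uniform over $\mathcal{A}^{\alpha,\ep}$.
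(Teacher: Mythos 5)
Your proof is correct and follows essentially the same route as the paper: combine the instantaneous-regularization lemmas (the $\mathbb{V}^{\alpha,\ep}_m$-absorbing set, Lemma \ref{t:thermal-reg}, and the lemma giving \eqref{mu-1}--\eqref{mu-2} for $\sqrt{\alpha}\mu$) with the invariance of $\mathcal{A}^{\alpha,\ep}$, the paper merely re-deriving the $H^2$ bound on $\sqrt{\alpha}\mu$ from \eqref{pro-0}, \eqref{mu-2}, and elliptic regularity rather than citing \eqref{mu-1} directly. Your additional bookkeeping verifying $\mathcal{A}^{\alpha,\ep}\subset\mathcal{X}^{\alpha,\ep}_m$ via \eqref{ap-1} is a point the paper leaves implicit, and is a worthwhile clarification.
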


\begin{proof}
To begin, we let $\zeta_0=(\phi_0,\theta_0)^{tr}\in\mathbb{H}^{\alpha,\ep}_m$ be such that $F(\phi_0)\in L^1(\Omega)$ (i.e. $\zeta_0\in\mathcal{X}^{\alpha,\ep}_m$).
By the precompactness of the solution operators (see Remark \ref{r:gradient}), we know that, for all $t\ge t_*$, $S_{\alpha,\ep}(t)\zeta_0\in\mathbb{V}^{\alpha,\ep}_m$ ($t_*$ was given in Lemma \ref{t:pseudometric} and we may choose $\tau=t_*$ in Lemma \ref{t:thermal-reg}).
Letting $\zeta_1=S_{\alpha,\ep}(t)\zeta_0,$ it now suffices to show that $S_{\alpha,\ep}(t)\zeta_1\in\mathbb{W}^{\alpha,\ep}_m$ for all $t\ge t_*;$ i.e., we will show that 
\begin{align}
\sqrt{\alpha}\mu\in L^\infty(t_*,\infty;H^2(\Omega)).  \label{rob-3}
\end{align} 

Since $\phi_t=\Delta\mu,$ the estimate \eqref{pro-0} shows, 
\begin{align}
\|\nabla\mu(t)\|^2 + \alpha\|\Delta \mu(t)\|^2 \le \left( 1+\frac{1}{\nu_5} \right)Q(\|\zeta_0\|_{\mathbb{H}^{\alpha,\ep}_m},m).  \label{rop-1}
\end{align}
Adding $\langle\mu(t)\rangle^2$ to both sides of \eqref{rop-1} and applying the Poincar\'{e} inequality \eqref{Poincare2} (on the left) and \eqref{mu-2} (on the right), we now have
\begin{align}
c^{-1/2}_\Omega\|\mu(t)\|^2 + \alpha\|\Delta \mu(t)\|^2 \le Q(\|\zeta_0\|_{\mathbb{H}^{\alpha,\ep}_m},m)  \label{rop-2}
\end{align}
for some positive monotonically increasing function $Q.$
In this setting, the (standard) $H^2$-elliptic regularity estimate is
\begin{align}
\sqrt{\alpha}\|\mu\|_{H^2(\Omega)} & \le C(\sqrt{\alpha}\|A_N\mu\|+\|\mu\|)  \notag
\end{align}
for some positive constant $C,$ so with \eqref{rop-2} we readily find 
\begin{align}
\sqrt{\alpha}\|\mu\|_{H^2(\Omega)} & \le Q(\|\zeta_0\|_{\mathbb{H}^{\alpha,\ep}_m},m).
\end{align}
This establishes \eqref{rob-3} and completes the proof.
\end{proof}

\section{Conclusions and further remarks}

In this article we have shown that the relaxation Problem P$_{\alpha,\ep}$ is globally well-posed and generates a dissipative and conservative semigroup of solution operators which, in turn, admit a family of global attractors that possess a certain degree of regularity.
The relaxation problem considered here presented many difficulties due to the presence of the nonlocal diffusion terms on the order parameter $\phi$. 

Some interesting future work would include determining whether the (fractal) dimension of the global attractors found here is finite and {\em{independent}} of $\alpha$ and $\ep$.
Additionally, it would also be interesting to establish an upper-semicontinuity result for the family of global attractors when $\alpha\rightarrow0^+$ and $\ep\rightarrow0^+$ (compare this to the standard diffusion case in \cite{Gal&Miranville09}).

Hence, we should also examine the existence of an exponential attractor for Problem P$_{\alpha,\ep}$, and naturally, its basin of attraction.
With that result, we could seek a robustness result for the family of exponential attractors.
Examining problems related to stability (and hence the approximation of the longterm behavior of a relaxation problem to the associated limit problem) may prove to be an important source of further work on nonlocal Cahn-Hilliard and nonlocal phase field models.

Of course, some future work may examine several variants to the current model. 
Such variants may include a convection term that accounts for the effects of an averaged (fluid) velocity field, which naturally couples with a nonisothermal Navier-Stokes equation (on the former, see for example \cite{Porta-Grasselli-2014}).
Indeed, one may include nonconstant mobility in the nonlocal Cahn-Hilliard equation (cf. e.g. \cite{Frigeri-Grasselli-Rocca_2014}). 
It may be interesting to generalize the coupled heat equation to a Coleman-Gurtin type equation. 
Also, one may examine the associated nonlocal phase-field model \eqref{mot-1}, and the effects of generalizing the heat equation along the lines of \cite{GPS07,Herrera&Pavon02,Joseph-Preziosi-89,Joseph-Preziosi-90} where Fourier's law is replaced with a Maxwell-Cattaneo law because in this more realistic setting, ``disturbances'' propagate at a {\em{finite}} speed. 

It would also be interesting to study the nonlocal variant of the Cahn-Hilliard and phase-field equations by introducing relevant dynamic boundary conditions (again, see \cite{Gal&Miranville09}).
In this case, several interesting difficulties may arise concerning the regularity of solutions because, typically in applications, $H^1(\Omega)$ regularity (or better) is sought in order to define the trace of the solution; recall, $trace:H^s(\Omega)\rightarrow H^{s-1/2}(\Gamma).$ 
Additionally, we should study the case when the potential is singular (see hypotheses in \cite[Section 3]{Gal&Grasselli14}, for example).

\section*{Acknowledgments}

The author would like to thank Professor Ciprian G. Gal for recommending portions of this project. 
In addition, the author is indebted to the anonymous referee(s) for their careful reading of the manuscript, which undoubtably improved the paper.

\bigskip

\bibliographystyle{amsplain}

\begin{thebibliography}{10}

\bibitem{Adams&Fournier03}
Robert~A. Adams and John J.~F. Fournier, \emph{Sobolev spaces}, second ed.,
  Pure and Applied Mathematics - Volume 140, Academic Press / Elsevier Science,
  Oxford, 2003.

\bibitem{AVMRTM10}
Fuensanta Andreu-Vaillo, Jos\'{e}~M. Maz\'{o}n, Julio~D. Rossi, and
  J.~Juli\'{a}n Toledo-Melero, \emph{Nonlocal diffusion problems}, Mathematical
  Surveys and Monographs, vol. 165, American Mathematical Society, Real
  Sociedad Matem\'{a}tica Espa\~{n}ola, 2010.

\bibitem{Babin&Vishik92}
A.~V. Babin and M.~I. Vishik, \emph{Attractors of evolution equations},
  North-Holland, Amsterdam, 1992.

\bibitem{Barbu76}
Viorel Barbu, \emph{Nonlinear semigroups and differential equations in {B}anach
  spaces}, Noordhoff International Publishing, Bucharest, 1976.

\bibitem{CFG12}
Pierluigi Colli, Sergio Frigeri, and Maurizio Grasselli, \emph{Global existence
  of weak solutions to a nonlocal {C}ahn--{H}illiard--{N}avier--{S}tokes
  system}, J. Math. Anal. Appl. \textbf{386} (2012), no.~1, 428--444.

\bibitem{CKRS07}
Pierluigi Colli, Pavel Krej\v{c}\'{i}, Elisabetta Rocca, and J\"{u}rgen
  Sprekels, \emph{Nonlinear evolution inclusions arising from phase change
  models}, Czechoslovak Math. J. \textbf{57} (2007), no.~132, 1067--1098.

\bibitem{Frigeri&Grasselli12}
Sergio Frigeri and Maurizio Grasselli, \emph{Global and trajectory attractors
  for a nonlocal {C}ahn--{H}illiard--{N}avier--{S}tokes system}, J. Dynam.
  Differential Equations \textbf{24} (2012), no.~4, 827--856.

\bibitem{Frigeri-Grasselli-Rocca_2014}
Sergio Frigeri, Maurizio Grasselli, and Elisabetta Rocca, \emph{A diffuse
  interface model for two-phase incompressible flows with nonlocal interactions
  and nonconstant mobility}, arXiv:1303.6446v2 (2015).

\bibitem{Gal&Grasselli08}
Ciprian~G. Gal and Maurizio Grasselli, \emph{The non-isothermal {A}llen--{C}ahn
  equation with dynamic boundary conditions}, Discrete Contin. Dyn. Syst.
  \textbf{22} (2008), no.~4, 1009--1040.

\bibitem{GGM08-2}
Ciprian~G. Gal, Maurizio Grasselli, and Alain Miranville, \emph{Robust
  exponential attractors for singularly perturbed phase-field equations with
  dynamic boundary conditions}, NoDEA Nonlinear Differential Equations Appl.
  \textbf{15} (2008), no.~4--5, 535--556.

\bibitem{Gal&Grasselli14}
Ciprian~G. Gal and Murizio Grasselli, \emph{Longtime behavior of nonlocal
  {C}ahn--{H}illiard equations}, Discrete Contin. Dyn. Syst. \textbf{34}
  (2014), no.~1, 145--179.

\bibitem{Gal&Miranville09}
Ciprian~G. Gal and Alain Miranville, \emph{Uniform global attractors for
  non-isothermal viscous and non-viscous {C}ahn--{H}illiard equations with
  dynamic boundary conditions}, Nonlinear Anal. Real World Appl. \textbf{10}
  (2009), no.~3, 1738--1766.

\bibitem{Giacomin-Lebowitz-97}
Giambattista Giacomin and Joel~L. Lebowitz, \emph{Phase segregation dynamics in
  particle systems with long range interactions. i. macroscopic limits}, J.
  Statist. Phys. \textbf{87} (1997), no.~1--2, 37--61.

\bibitem{Grasselli-2012}
Maurizio Grasselli, \emph{Finite-dimensional global attractor for a nonlocal
  phase-field system}, Istituto Lombardo (Rend. Scienze) Mathematica
  \textbf{146} (2012), 113--132.

\bibitem{GPS07}
Maurizio Grasselli, Hana Petzeltov\'{a}, and Giulio Schimperna,
  \emph{Asymptotic behavior of a nonisothermal viscous cahn-hilliard equation
  with inertial term}, J. Differential Equations \textbf{239} (2007), no.~1,
  38--60.

\bibitem{Herrera&Pavon02}
L.~Herrera and D.~Pav\'{o}n, \emph{Hyperbolic theories of dissipation: Why and
  when do we need them?}, Phys. A \textbf{307} (2002), 121--130.

\bibitem{Joseph-Preziosi-89}
D.~D. Joseph and Luigi Preziosi, \emph{Heat waves}, Rev. Modern Phys.
  \textbf{61} (1989), no.~1, 41--73.

\bibitem{Joseph-Preziosi-90}
D.~D. Joseph and Luigi Preziosi, \emph{Addendum to the paper: ``heat waves''}, Rev. Modern Phys.
  \textbf{62} (1990), no.~2, 375--391.

\bibitem{Krejci_Sprekels04}
Pavel Krej\v{c}\'{i} and J\"{u}rgen Sprekels, \emph{Nonlocal phase-field models
  for non-isothermal phase transitions and hysteresis}, Adv. Math. Sci. Appl.
  \textbf{14} (2004), no.~2, 593--612.

\bibitem{Milani&Koksch05}
Albert~J. Milani and Norbert~J. Koksch, \emph{An introduction to semiflows},
  Monographs and Surveys in Pure and Applied Mathematics - Volume 134, Chapman
  \& Hall/CRC, Boca Raton, 2005.

\bibitem{Miranville&Zelik02}
Alain Miranville and Sergey Zelik, \emph{Robust exponential attractors for
  singularly perturbed phase-field type equations}, Electron. J. Differential
  Equations \textbf{2002} (2002), no.~63, 1--28.

\bibitem{Porta-Grasselli-2014}
Francesco~Della Porta and Maurizio Grasselli, \emph{Convective nonlocal
  cahn-hilliard equations with reaction terms}, Discrete Contin. Dyn. Syst.
  Ser. B \textbf{20} (2015), no.~5, 1529--1553.

\bibitem{Renardy&Rogers04}
Michael Renardy and Robert~C. Rogers, \emph{An introduction to partial
  differential equations}, second ed., Texts in Applied Mathematics - Volume
  13, Springer-Verlag, New York, 2004.

\bibitem{Robinson94}
James~C. Robinson, \emph{Inertial manifolds for the {K}uramoto--{S}ivashinsky
  equation}, Physics Letters (1994), no.~184, 190--193.

\bibitem{Grasselli-Schimperna-2011}
Murizio Grasselli~Giulio Schimperna, \emph{Nonlocal phase-field systems with
  general potentials}, Discrete Contin. Dyn. Syst. \textbf{33} (2013),
  no.~11--12, 5089--5106.

\bibitem{Tanabe79}
Hiroki Tanabe, \emph{Equations of evolution}, Pitman, London, 1979.

\bibitem{Temam88}
Roger Temam, \emph{Infinite-dimensional dynamical systems in mechanics and
  physics}, Applied Mathematical Sciences - Volume 68, Springer-Verlag, New
  York, 1988.

\bibitem{Temam01}
Roger Temam, \emph{{N}avier-{S}tokes equations - theory and numerical analysis},
  reprint ed., AMS Chelsea Publishing, Providence, 2001.

\bibitem{Zheng04}
Songmu Zheng, \emph{Nonlinear evolution equations}, Monographs and Surveys in
  Pure and Applied Mathematics - Volume 133, Chapman \& Hall/CRC, Boca Raton,
  2004.

\bibitem{Zheng&Milani05}
Songmu Zheng and Albert Milani, \emph{Global attractors for singular
  perturbations of the {C}ahn--{H}illiard equations}, J. Differential Equations
  \textbf{209} (2005), no.~1, 101--139.

\end{thebibliography}
\providecommand{\bysame}{\leavevmode\hbox to3em{\hrulefill}\thinspace}
\providecommand{\MR}{\relax\ifhmode\unskip\space\fi MR }
\providecommand{\MRhref}[2]{%
  \href{http://www.ams.org/mathscinet-getitem?mr=#1}{#2}
}
\providecommand{\href}[2]{#2}

\end{document}